\documentclass[11pt]{article}

\usepackage[margin=1.1in]{geometry}
\usepackage{amsmath,amssymb,amsthm,mathtools,bbm,bm}
\usepackage{microtype,enumitem,authblk}
\usepackage[colorlinks=true,linkcolor=blue,citecolor=blue,urlcolor=blue]{hyperref}

\newtheorem{theorem}{Theorem}[section]
\newtheorem{proposition}[theorem]{Proposition}
\newtheorem{lemma}[theorem]{Lemma}
\newtheorem{corollary}[theorem]{Corollary}
\newtheorem{remark}[theorem]{Remark}
\newtheorem{defn}[theorem]{Definition}
\numberwithin{equation}{section}

\newcommand{\E}{\mathbb{E}}

\newcommand{\op}{\mathrm{op}}
\newcommand{\HS}{\mathrm{HS}}

\newcommand{\cN}{\mathcal{N}}
\newcommand{\cE}{\mathcal{E}}
\newcommand{\cZ}{\mathcal{Z}}

\newcommand{\dd}{\,\mathrm{d}}

\title{Higher-Order Fluctuations of the Log-Partition Function for Ising Models on Inhomogeneous Random Graphs}

\author[1]{Sanchayan Bhowal}
\author[2]{Somabha Mukherjee}

\affil[1]{Department of Statistics,
Stanford University\\
\texttt{sbhowal@stanford.edu}}

\affil[2]{Department of Statistics and Data Science,
National University of Singapore\\
\texttt{somabha@nus.edu.sg}}

\date{}

\begin{document}
\maketitle

\begin{abstract}
Classical central limit theorems (CLTs) for log-partition functions of Ising models on Erd\H{o}s-R\'enyi random graphs lead to degenerate limits in the high-temperature regime when self-loops are absent. In this paper, we fill this surprising gap in the more general context of Ising models on inhomogeneous random graphs. In particular, we show that if $G(N,W)$ is an inhomogeneous random graph without self-loops generated by a graphon $W$, then, in the high-temperature regime, the log-partition function of the corresponding Ising model satisfies a CLT on the $N$-scale, with a limiting variance that incorporates contributions from cycles of all orders. This behavior contrasts sharply with that of the corresponding model in which self-loops are allowed, where the analogous Gaussian fluctuations occur on the \(\sqrt{N}\)-scale, with their asymptotic variance determined entirely by the diagonal profile of $W$. Our analysis yields, as a byproduct of independent interest, a CLT for the log-determinant of the associated resolvent matrix, established by combining the Lindeberg CLT for triangular arrays with suitable bounds on higher-order trace terms.
\end{abstract}

\section{Introduction}

 A classical model for studying interacting spins and, more broadly, dependence across networked systems, is the Ising model. Originally developed in statistical physics to model ferromagnetism \cite{ising1925}, it has since become a fundamental modeling tool in a wide range of fields, including social-network analysis, image processing, computer vision, pattern recognition, spatial statistics, disease mapping, computational biology, and neural networks \cite{sudipto,geman,green,hopfield,montanari,isingimage7,patrec}. Given a symmetric \(N\times N\) interaction matrix \(M\), the Ising model at inverse temperature \(\beta>0\) is the probability measure on \(\{-1,1\}^N\) defined by

\begin{equation}\label{isingmoddef2}
    \mathbb{P}_{\beta,M}(\boldsymbol{\sigma})
=
\frac{1}{2^N \cZ_N(\beta,M)}
\exp\left(
\frac{\beta}{2N}\boldsymbol{\sigma}^{\top}M\boldsymbol{\sigma}
\right),
\qquad
\boldsymbol{\sigma}\in\{-1,1\}^N,
\end{equation}
where
$$
\cZ_N(\beta,M)
:=
2^{-N}
\sum_{\boldsymbol{\sigma}\in\{-1,1\}^N}
\exp\left(
\frac{\beta}{2N}\boldsymbol{\sigma}^{\top}M\boldsymbol{\sigma}
\right) = \E_{\bm{\sigma}\sim \text{Unif}(\{-1,1\}^N)}\exp\left(\frac{\beta}{2N}\bm{\sigma}^{\top}M\bm{\sigma}\right)
$$
is the partition function, with $\text{Unif}(\{-1,1\}^N)$ denoting the uniform distribution on the hypercube $\{-1,1\}^N$. Understanding the behavior of this model for different types of interaction structure has received widespread attention. 

When the underlying interactions are induced by a random graph, one natural choice is the Erd\H{o}s-R\'enyi model, in which edges appear independently with a common probability \cite{BovierGayrard1993}. In this case, the interaction matrix $M$ appearing in \eqref{isingmoddef2} is the adjacency matrix of the Erd\H{o}s-R\'enyi graph, and the partition function $\cZ_N(\beta,M)$ therefore naturally inherits the randomness of the underlying edge configuration.  This naturally leads to the question of how this randomness is reflected in the second-order asymptotic behavior of the partition function. High-temperature fluctuations of the partition function for the Ising model on Erd\H{o}s-R\'enyi random graphs with self-loops, have been studied in various sparsity regimes by Kabluchko, L\"owe, and Schubert \cite{kabluchko2021partition}. In particular, the following is a direct consequence of their result for the dense regime at high temperatures:
\begin{equation*}
    \sqrt{N}\left(\log \cZ_N(\beta,A_N) - \log \mathbb{E} \cZ_N(\beta,A_N)\right)  \xrightarrow{\mathrm d} \cN\left(0,\frac{\beta^2 p(1-p)}{4}\right)
\end{equation*}
where $A_N$ denotes the adjacency matrix of the Erd\H{o}s-R\'enyi random graph. An inspection of their proof strategy, however, reveals that, in the absence of self-loops, the $\sqrt{N}$-scale limit above is degenerate, rendering the corresponding asymptotic result uninformative. More importantly, this suggests that the fluctuations of $\log \cZ_N(\beta,A_N)$ around its annealed mean $\log \mathbb{E}\cZ_N(\beta,A_N)$ are of smaller order than $N^{-\frac{1}{2}}$.

Since self-loops are uncommon in real-world networks, it is natural to ask, from a practical perspective, how their absence affects the limiting behavior of the associated Ising models.
In this paper, we address this gap by showing that, in the absence of self-loops, the second-order fluctuations of the Ising log-partition function occur on the $N$-scale for a broader class of inhomogeneous random graphs, encompassing the Erd\H{o}s-R\'enyi model as a special case. This phenomenon has also been observed for the \(p\)-spin Sherrington–Kirkpatrick model for even $p>2$, where the versions allowing and excluding coinciding indices are identical at the level of their thermodynamic functions but exhibit high-temperature log-partition function fluctuations on the \(N^{-(p-2)/4}\) and \(N^{-(p-2)/2}\) scales, respectively \cite{bovier2002rem}.

Inhomogeneous random graphs provide a natural framework for modeling networks in which edge probabilities vary across vertex pairs due to latent heterogeneity among the vertices
\cite{bickel2009nonparametric,bickel2011method,crane2018probabilistic,bollobas2007phase,hoff2002latent}.
A widely used class of such models is specified through an underlying graphon -- a symmetric, Borel measurable function
$W:[0,1]^2\to[0,1]$ \cite{Borgs2008,lovasz2012large}.
In this paper, we consider the following dense graphon model.

\begin{defn}
Let $W$ be a Riemann integrable graphon satisfying
$\int_{[0,1]^2}W(x,y)\,\mathrm dx\,\mathrm dy>0$.
The $W$-random graph, denoted by $G(N,W)$, is the random graph on the vertex set
$[N]\coloneq\{1,2,\ldots,N\}$ in which vertices $i$ and $j$ are connected with probability
$
\omega_{ij} := W\left(\frac{i}{N},\frac{j}{N}\right),
$
independently over all $1\leq i< j\leq N$, and does not contain any self-loop.
\end{defn}

For a $W$-random graph $G(N,W)$, let $A_N$ denote its hollow (with diagonal entries $0$) adjacency matrix. $A_N$ is a symmetric random matrix whose upper-triangular entries are independent and satisfy
\(
 A_{ij}\sim\operatorname{Bernoulli}(\omega_{ij})
\) for $i<j.$ The central object of interest in this paper is the logarithm of the partition function \(\cZ_N(\beta,A_N)\) of the Ising model on \(G(N,W)\), corresponding to the choice \(M=A_N\) in \eqref{isingmoddef2}. In this paper, we show that $\log \cZ_N(\beta,A_N)$, centered by its annealed mean $\log\E\cZ_N(\beta,A_N)$, exhibits second-order Gaussian fluctuations on the $N$-scale.

\begin{theorem}\label{thm:main}
    Suppose that $\beta>0$ is such that \(\beta\|T_W\|_{\op}<1\), where $(T_Wf)(x) :=\int_0^1W(x,y)f(y)\dd y$ for any $f \in L^2[0,1]$. Define
    \begin{eqnarray*}
        && U_\beta(x,y) =W(x,y)+\sum_{m=2}^{\infty}\beta^{m-1}\int_{[0,1]^{m-1}}
 W(x,x_1)W(x_1,x_2)\cdots W(x_{m-1},y)
 \dd x_1\cdots\dd x_{m-1},\\
&&\text{and}\qquad \tau_\beta^2 = \tau_{\beta,W}^2 
:=
\frac{\beta^4}{8}
\int_{[0,1]^2}
W(x,y)\bigl(1-W(x,y)\bigr)
\left[
1+2\bigl(U_\beta(x,y)-W(x,y)\bigr)
\right]^2
\dd x\,\dd y.
    \end{eqnarray*}

Then, as \(N\to\infty\), 
    \[
    N\left(
        \log\cZ_N(\beta,A_N)
        -\log\E\cZ_N(\beta,A_N)
    \right)
    \xrightarrow{\mathrm d}
    \cN(0,\tau_\beta^2).
    \]
\end{theorem}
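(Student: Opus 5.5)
Our plan is to pass through a Gaussian (Hubbard--Stratonovich) representation and reduce everything to a central limit theorem for a log-determinant. Write $\omega=(\omega_{ij})$ with zero diagonal, and put $\bar A=A_N-\E A_N$, whose entries are centred, independent above the diagonal, and of variance $\omega_{ij}(1-\omega_{ij})$.

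\textbf{Step 1: high-temperature expansion.} In the high-temperature regime, $\log\cZ_N(\beta,M)$ is asymptotically the quadratic-form Gaussian integral. Concretely, we claim that
\[
\log\cZ_N(\beta,A_N)=-\tfrac12\log\det\bigl(I-\tfrac{\beta}{N}A_N\bigr)-\tfrac{\beta}{2N}\operatorname{tr}A_N+\mathcal{E}_N ,
\]
where $N\mathcal{E}_N$ is asymptotically deterministic up to $o_P(1)$. This should follow from the cumulant (cycle) expansion of $\E_\sigma \exp(\frac{\beta}{2N}\sigma^\top M\sigma)$, using that $\sigma_i^2=1$ kills self-overlapping cycles. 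In our setting the trace term vanishes because $A_N$ is hollow. The same expansion applies to $\E\cZ_N$, whose exponent $\frac{\beta}{2N}\sigma^\top\omega\sigma$ is corrected by the exact Bernoulli moment generating function. It is cleaner to write
\[
\E_A\exp\Bigl(\tfrac{\beta}{N}\textstyle\sum_{i<j}A_{ij}\sigma_i\sigma_j\Bigr)=\prod_{i<j}\bigl(1+\omega_{ij}(e^{\beta\sigma_i\sigma_j/N}-1)\bigr).
\]
Expanding to second order in $\beta/N$, the correction is $\frac{\beta^2}{2N^2}\sum_{i<j}\omega_{ij}(1-\omega_{ij})=O(1)$, which is deterministic and hence cancels in the centring.

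\textbf{Step 2: reduction to a log-determinant CLT.} The centred quantity is then
\[
N(\log\cZ_N-\log\E\cZ_N)=-\tfrac N2\bigl[\log\det(I-\tfrac{\beta}{N}A_N)-\E\log\det(I-\tfrac{\beta}{N}A_N)\bigr]+o_P(1)+\text{deterministic}.
\]
We must check that the deterministic remainder is $o(1)$; this requires care with bias terms from $\operatorname{tr}(\bar A^2)$. Write $R=(I-\frac{\beta}{N}\omega)^{-1}$, which is bounded since $\beta\|T_W\|_{\op}<1$ (the normalised kernel operator approximates $T_W$ by Riemann integrability). Then
\[
\log\det(I-\tfrac\beta N A)=\log\det(I-\tfrac\beta N\omega)-\sum_{k\ge1}\tfrac1k\operatorname{tr}\bigl((\tfrac\beta N R\bar A)^k\bigr).
\]

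\textbf{Step 3: the individual trace terms.} Since $\|\bar A\|_{\op}=O_P(\sqrt N)$, the operator $\frac\beta N R\bar A$ has norm $O_P(N^{-1/2})$.
\begin{itemize}
\item The $k=1$ term is $\frac{\beta}{N}\sum_{i\neq j}R_{ji}\bar A_{ij}$, a sum of independent terms. Writing $R=I+\frac{\beta}{N}(\omega+\frac\beta N\omega^2+\dots)$ and using $R_{ij}\approx \frac{\beta}{N}U_\beta(i/N,j/N)$ off the diagonal, its $N$-scaled fluctuation is $\beta^2 N^{-1}\sum_{i<j}U_\beta\bar A_{ij}$, which has order-one variance.
\item The $k=2$ term is $\frac{\beta^2}{N^2}\operatorname{tr}(R\bar AR\bar A)$. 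After centring, it contains a linear part from the diagonal of $R$, namely $\frac{\beta^2}{N^2}\sum_{i\ne j}(\bar A_{ij}^2-\E\bar A_{ij}^2)$ times $N/2$, which is again an independent sum of order one. It also contains an off-diagonal quadratic chaos whose variance is $O(1/N)$ after multiplying by $N$.
\end{itemize}
Collecting the linear statistic
\[
S_N=\tfrac{\beta^2}{2N}\sum_{i<j}\Bigl[c_{ij}\bar A_{ij}+\tfrac{\beta^0}{2}(\bar A_{ij}^2-\E\bar A_{ij}^2)\Bigr],
\]
and using that for Bernoulli variables $\bar A_{ij}^2-\E\bar A_{ij}^2=(1-2\omega_{ij})\bar A_{ij}$, everything becomes linear in $\bar A_{ij}$. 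One then checks that the coefficient combines into $\frac{\beta^2}{2}\bigl[1+2(U_\beta-W)\bigr]$ up to the factor conventions; this bookkeeping must reproduce $\tau_\beta^2$. The Lindeberg CLT for the triangular array $\{N^{-1}\bar A_{ij}\}$ is immediate, since the summands are bounded by $O(1/N)$ and the variance sum converges by Riemann integrability.

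\textbf{Step 4 (main obstacle): controlling the rest.} We must show that $N\sum_{k\ge3}\frac1k(\operatorname{tr}(\cdot)^k-\E)$ and the off-diagonal part of $k=2$ are $o_P(1)$, together with the error $\mathcal{E}_N$ in Step 1. Naive operator-norm bounds give only $N\cdot N\cdot N^{-k/2}$, which fails for $k\le4$. We would therefore compute variances of $\operatorname{tr}((R\bar A)^k)$ by moment/graph counting, showing $\operatorname{Var}=O(N^{k-2})$ so that $N\cdot N^{-k}\sqrt{\operatorname{Var}}\to0$. We would truncate at $k\le K$ and bound the tail by operator norms on the high-probability event $\|\bar A\|_{\op}\le C\sqrt N$. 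Justifying Step 1 with an $o_P(1/N)$ error is equally delicate; we would attempt it by comparing with the Gaussian integral via the cluster expansion valid for $\beta\|T_W\|<1$.
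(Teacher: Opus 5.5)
Your skeleton is essentially the paper's, and it does produce the variance $\tau_\beta^2$: Gaussian comparison, expansion of $\log\det(I-\beta A_N/N)$ around the mean matrix via $R=(I-\beta\bW_N/N)^{-1}$, a linear statistic built from the $k=1$ term and the diagonal part of the $k=2$ term using $\bar A_{ij}^2-\E\bar A_{ij}^2=(1-2\omega_{ij})\bar A_{ij}$, and operator-norm control for $k\ge5$. The genuine gap is the centring, which you defer with ``we must check that the deterministic remainder is $o(1)$''. That check is the substance of the theorem. On the $N$-scale several deterministic order-one constants appear, each nonzero, and they vanish only in combination.

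\textbf{(a) The Ising--Gaussian error.} You need its exact form, not just that it is asymptotically deterministic:
$\log\cZ_N(\beta,M)+\frac12\log\det(I-\beta M/N)=-\frac14\sum_i\{(I-\beta M/N)^{-1}_{ii}-1\}^2+O(N^{-2})$.
For $M=A_N$ this gives $N\mathcal E_N\xrightarrow{\P}-\frac14\int_0^1h_\beta^2\dd x$. For the mean matrix, to which the annealed side reduces, it gives $-\frac14\int_0^1\bar h_\beta^2\dd x$. Here $\bar h_\beta(x)=\sum_{m\ge2}\beta^mW_m(x,x)$, $h_\beta=\bar h_\beta+\beta^2s_W$, and $s_W(x)=\int_0^1W(x,y)(1-W(x,y))\dd y$.
\begin{itemize}
\item The two constants differ because $(A_N^2)_{ii}=\sum_jA_{ij}$ is a degree (as $A_{ij}^2=A_{ij}$), while $(\bW_N^2)_{ii}=\sum_j\omega_{ij}^2$. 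So ``the same expansion'' applied to $\E\cZ_N$ produces a different constant.
\item Proving the $A_N$ limit also needs concentration of $N^{1-m}(A_N^m)_{ii}$ around $W_m(i/N,i/N)$, which your plan does not address.
\item It is not clear that a cluster expansion converges all the way up to $\beta\|T_W\|_{\op}<1$. The paper obtains the closed form by a coordinatewise Rademacher-to-Gaussian Lindeberg replacement plus a Schur-complement computation.
\end{itemize}

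\textbf{(b) Terms you treat only through their fluctuations.} These have non-vanishing limits:
$\frac{\beta^2}{N}\tr\{(R-I)\bar A^2\}\xrightarrow{\P}\beta^2\int_0^1\bar h_\beta s_W\dd x$ and $\frac{\beta^4}{4N^3}\tr\{(R\bar A)^4\}\xrightarrow{\P}\frac{\beta^4}{2}\int_0^1s_W^2\dd x$, the latter coming from the walks $i\to j\to i\to k\to i$.
Both enter $N\log\det(I-\beta A_N/N)$ with a minus sign. After the factor $-\frac12$ they therefore contribute $\frac{\beta^2}{2}\int\bar h_\beta s_W+\frac{\beta^4}{4}\int s_W^2$ to the $N$-scaled mean. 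This is cancelled exactly by $-\frac14\int(h_\beta^2-\bar h_\beta^2)$ from (a). Without (a) and (b), your argument yields at best a Gaussian limit with an unidentified mean.

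\textbf{The annealed side.} The same issue arises here.
\begin{itemize}
\item The Bernoulli correction $\frac{\beta^2}{2N^2}\sum_{i<j}\omega_{ij}(1-\omega_{ij})$ is of order $N$ after multiplying by $N$. Being deterministic does not make it cancel; it must be matched by the mean of $\frac{\beta^2}{4N^2}\tr(\bar A^2)$ inside $-\frac12\log\det(I-\beta A_N/N)$.
\item More seriously, stopping the Bernoulli cumulant expansion at second order is not enough. The odd cubic term shifts each coupling by $O(N^{-3})$. The crude bound $|\sum_{i<j}D_{ij}\sigma_i\sigma_j|\le N^2\max_{i,j}|D_{ij}|$ then gives only an $O(N^{-1})$ error in $\log\E\cZ_N$, i.e.\ $O(1)$ on the $N$-scale.
\item The paper handles this by absorbing all odd orders exactly into an effective matrix $\widetilde\bW_N$ with entries in $[0,1]$. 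It then shows $\log\cZ_N(\beta,\widetilde\bW_N)-\log\cZ_N(\beta,\bW_N)=O(N^{-2})$ by interpolating the log-determinant and the diagonal correction, using $\|D_N\|_F=O(N^{-2})$ for $D_N=\beta(\widetilde\bW_N-\bW_N)/N$ together with $\|R_N(u)-I\|_F=O(1)$. This exploits the fact that off-diagonal resolvent entries are $O(N^{-1})$.
\end{itemize}

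\textbf{Two smaller points.}
\begin{itemize}
\item $\E\log\det(I-\beta A_N/N)$ is undefined without truncation. For example, with $W\equiv p<1$ and $1<\beta<1/p$, the matrix $I-\beta A_N/N$ is not positive definite when $G(N,W)$ is complete, which has positive probability. Centring at $\log\cZ_N(\beta,\bW_N)$ plus the bias term, as the paper does, avoids taking expectations of the log-determinant altogether.
\item Your bound $\operatorname{Var}\tr\{(R\bar A)^k\}=O(N^{k-2})$ is false. Already for $R=I$, triangle and four-cycle counts give variances of order $N^3$ and $N^4$ for $k=3,4$. The weaker bound $o(N^{2k-2})$ that you actually need does hold.
\end{itemize}
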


The limiting variance \(\tau_\beta^2\) captures, through \(U_\beta\), contributions from weighted paths of every length connecting \(x\) and \(y\), and hence from the full hierarchy of cycles passing through \(x\) and \(y\). 
Taking $W\equiv p$ in Theorem \ref{thm:main} gives us the following central limit theorem (CLT) for the log-partition function in the dense Erd\H{o}s-R\'enyi Ising model.

\begin{corollary}\label{corerintro}
    Suppose that $0<\beta<\frac{1}{p}$. Then, as $N\rightarrow \infty$,
   $$ N\left(
\log\cZ_N(\beta,A_N)
-\log\E\cZ_N(\beta,A_N)
\right)
\xrightarrow{\mathrm d}
\cN\left(0,\tau_{\beta,p}^2\right)$$
where $$\tau_{\beta,p}^2 := \frac{\beta^4p(1-p)}{8}
\left(
\frac{1+\beta p(2p-1)}{1-\beta p}
\right)^2.$$
\end{corollary}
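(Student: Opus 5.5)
The corollary should follow from Theorem~\ref{thm:main} by taking the constant graphon $W\equiv p$ with $p\in(0,1]$. The proof then has three parts: check the hypotheses of the theorem, evaluate $U_\beta$ in closed form, and substitute into $\tau_\beta^2$. There is no real obstacle. The only point that needs care is identifying $\|T_W\|_{\op}$, so that the condition $\beta\|T_W\|_{\op}<1$ becomes exactly $\beta<1/p$.

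\textbf{Hypotheses.} The constant function $W\equiv p$ is symmetric, Borel measurable, takes values in $[0,1]$, is Riemann integrable, and has $\int_{[0,1]^2}W=p>0$. So $G(N,W)$ is the Erd\H{o}s--R\'enyi graph without self-loops. The operator is $(T_Wf)(x)=p\int_0^1 f(y)\dd y = p\langle f,\mathbbm{1}\rangle$. This is rank one and self-adjoint, with eigenfunction $\mathbbm{1}$ and eigenvalue $p$. By Cauchy--Schwarz, $\|T_Wf\|_2\le p\|f\|_2$, and equality holds at $f=\mathbbm{1}$. Hence $\|T_W\|_{\op}=p$, and the assumption $0<\beta<1/p$ is precisely $\beta\|T_W\|_{\op}<1$.

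\textbf{Computing $U_\beta$.} Each $(m-1)$-fold integral in the definition equals $p^m$, because the integrand is the constant $p^m$ over the unit cube. Therefore
\[
U_\beta(x,y)=p+\sum_{m=2}^\infty \beta^{m-1}p^m=p\sum_{k=0}^\infty(\beta p)^k=\frac{p}{1-\beta p}.
\]
The geometric series converges because $\beta p<1$. It follows that
\[
U_\beta-W=\frac{p}{1-\beta p}-p=\frac{\beta p^2}{1-\beta p},
\]
and so
\[
1+2(U_\beta-W)=\frac{1-\beta p+2\beta p^2}{1-\beta p}=\frac{1+\beta p(2p-1)}{1-\beta p}.
\]

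\textbf{Substituting.} The integrand defining $\tau_\beta^2$ is constant on $[0,1]^2$, so the integral equals that constant:
\[
\tau_\beta^2=\frac{\beta^4 p(1-p)}{8}\left(\frac{1+\beta p(2p-1)}{1-\beta p}\right)^2=\tau_{\beta,p}^2 .
\]
Theorem~\ref{thm:main} now gives the stated convergence in distribution. When $p=1$ the variance is $0$ and the limit is the point mass at $0$, which we read as $\cN(0,0)$; this case is consistent with the theorem as stated.
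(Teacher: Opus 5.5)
Your proposal is correct and takes the same route as the paper, which obtains the corollary by specializing Theorem~\ref{thm:main} to the constant graphon $W\equiv p$. Your computations of $\|T_W\|_{\op}=p$, $U_\beta\equiv p/(1-\beta p)$, and the resulting variance $\tau_{\beta,p}^2$ are all accurate.
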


The study of fluctuations of the free energy and log-partition function has witnessed a remarkable surge of interest in recent years, particularly for spin systems on random graphs.
Fluctuations of the partition function for Ising models on Erd\H{o}s--R\'enyi graphs have been studied in various sparsity regimes by Kabluchko, L\"owe, and Schubert \cite{kabluchko2021partition}.  In the bounded-average-degree setting, Coja-Oghlan et al.\ \cite{cojaoghlan2026isingfree} established both Gaussian and non-Gaussian fluctuations of the Erd\H{o}s--R\'enyi Ising log-partition function at different phases.  The recent work of Prodromidis and Sly \cite{prodromidis2026critical} resolves the corresponding critical Erd\H{o}s--R\'enyi Ising problem and reveals an additional source of randomness coming from counts of paths of slowly diverging length.  Taken together, these results illustrate a recurring phenomenon: second-order fluctuations can be considerably more sensitive than first-order thermodynamic limits to the precise structure of the interaction matrix.  The distinction between the self-loop model \cite{mukherjee2026isinginference} and the hollow model considered here highlights another instance of this phenomenon, where the fluctuation scale itself changes from $\sqrt{N}$ to $N$.

Fluctuations of partition functions and free energies have a long history of study in spin-glass models.
For the Sherrington-Kirkpatrick (SK) model, Gaussian fluctuations in the high-temperature regime were established in the classical works of Aizenman, Lebowitz, and Ruelle \cite{aizenman1987rigorous} and Comets and Neveu \cite{comets1995sk}, with subsequent extensions to $p$-spin and mixed $p$-spin models \cite{bovier2002rem,banerjee2021mixed,bovier2024twoscales}.  For the SK model with weak external fields, Dey and Wu \cite{dey2023weakfield} identified different field-strength subregimes in which the Gaussian fluctuations have variances of different orders. For the SK model with an additional Curie-Weiss interaction, Banerjee \cite{banerjee2020skcw} proved Gaussian fluctuations in the paramagnetic regime, while the recent work of Dey and Kang \cite{dey2026skfi} studied the ferromagnetic regime and the critical window separating the two phases.   Related fluctuation transitions are particularly explicit for spherical spin glasses: In the absence of an external magnetic field, Baik and Lee \cite{baik2016spherical} obtain Gaussian fluctuations at high temperature and Tracy--Widom fluctuations at low temperature, while the regimes of ferromagnetic interactions and non-vanishing external fields have been addressed in \cite{baik2017sphericalferro,baik2021external}. 

The starting point of our proof, and a result of independent interest, is a CLT for \(\log\cZ_N(\beta,A_N)\) relative to its mean-network counterpart \(\log\cZ_N(\beta,\E A_N)\). These fluctuations occur on the same \(N\)-scale but require an additional deterministic bias correction under the mean-network centering. The proof of this mean-network-centered CLT involves comparing the Ising log-partition function directly with the log-determinant of the associated random matrix through a stepwise Rademacher-to-Gaussian replacement argument. This essentially reduces the problem to obtaining a CLT for a random log-determinant. We then expand this log-determinant, after subtracting its mean-network counterpart and applying an additional bias correction, in terms of traces of powers of a transformed version of the centered matrix \(A_N-\E A_N\). The leading linear term in this expansion can be handled using classical CLTs for triangular arrays, while the higher-order trace terms can be shown to converge in probability and first moment to explicit constants. The annealed-centered CLT is then obtained from the bias-corrected mean-network CLT by an exact edgewise moment-generating-function factorization argument.

\renewcommand{\P}{\mathbb{P}}
\renewcommand{\d}{\dd}
\newcommand{\tr}{\operatorname{Tr}}
\newcommand{\bW}{\bm{W}}
\newcommand{\mudet}{\mu_{\det}}
\newcommand{\sbw}{\tau_\beta}
\newtheorem{thm}[theorem]{Theorem}
\newtheorem{cor}[theorem]{Corollary}
\newtheorem{ppn}[theorem]{Proposition}

\subsection{Preliminaries}
We begin by introducing some notation that will be used throughout our analysis:

\begin{enumerate}
    \item  Define the $N\times N$ matrix $\bW_N := ((\omega_{ij}))_{1\le i,j\le N}$, where we set $\omega_{ii}=0$ for all $1\le i \le N$. Also, denote the centered adjacency matrix $A_N - \bW_N$ by $\bar{A}_N$ .

    \item For a matrix $M$, we define the corresponding local field vector as the function:
    $$\bm L(\bm x) = \bm L^{(M)}(\bm x) := \frac{\beta}{N} M^\top \bm x.$$ The $i^\mathrm{th}$ component of $\bm L$ is denoted by $L_i$.

    \item Define the Gaussian partition function:
    $$Z^G(B) :=\E_{\bm{Z}\sim \cN(\bm 0, I_N)}\exp\left(\frac12\bm Z^{\top}B\bm{Z}\right).$$

\item Throughout, the symbols \(\|\cdot\|_{\op}\), \(\|\cdot\|_{\HS}\), and
\(\|\cdot\|_F\) denote the operator, Hilbert-Schmidt, and Frobenius
norms, respectively.

\item  For nonnegative quantities \(a,b\), the relation
\(a\lesssim_\theta b\) means \(a\le C_\theta b\) for a constant $C_\theta$ depending
only on \(\theta\); the subscript is omitted when the constant is absolute.
\end{enumerate}

A fundamental distinction between the annealed-centered asymptotics of the models with and without self-loops lies in the structure of their limiting Gaussian variances. In the self-loop model, the variance established in \cite{mukherjee2026isinginference} is determined entirely by the diagonal profile $x\mapsto W(x,x)$ through the corresponding Bernoulli variance $W(x,x)(1-W(x,x))$. By contrast, in the hollow model, the variance reflects the cumulative effect of multistep connectivity across the graphon, as encoded by the resolvent transform of the graphon integral operator
\begin{equation*}
U_\beta:=T_W(I-\beta T_W)^{-1}.
\end{equation*}
This operator is Hilbert--Schmidt and has kernel
\begin{equation}\label{Useriesuf4}
 U_\beta(x,y) =W(x,y)+\sum_{m=2}^{\infty}\beta^{m-1}W_m(x,y),
\end{equation}
where for \(m\ge2\),
\[
 W_m(x,y):=\int_{[0,1]^{m-1}}
 W(x,x_1)W(x_1,x_2)\cdots W(x_{m-1},y)
 \d x_1\cdots\d x_{m-1}.
\]
Note that for \(m\ge2\),
\[
 |W_m(x,y)|
 \le \|W(x,\cdot)\|_2\,\|T_W\|_{\op}^{m-2}\,
       \|W(\cdot,y)\|_2
 \le \|T_W\|_{\op}^{m-2}.
\]
Consequently, in our high-temperature regime, this series converges uniformly.  

In this context, define the diagonal profiles
\begin{align}\label{hbbarseriesuf4}
 \bar h_\beta(x)
 &:=\sum_{m=2}^{\infty}\beta^mW_m(x,x)\qquad \text{and}\qquad
 h_\beta(x)
 :=\beta^2d_W(x)+\sum_{m=3}^{\infty}\beta^mW_m(x,x),
\end{align}
where $d_W(x) := \int_0^1 W(x,y)\d y.$ If we define \[s_W(x) := \int_0^1W(x,y)(1-W(x,y))\d y = d_W(x)-W_2(x,x),\] then
\(
 h_\beta(x)=\bar h_\beta(x)+\beta^2s_W(x).
\)
We will later show (see Lemma~\ref{lem:riemann-approximation}) that the kernels \(W_m\), the
functions \(d_W,s_W\), and the diagonal profiles
\(x\mapsto W_m(x,x)\), for $m\ge 2$, are all Riemann integrable.  Uniform convergence
of the series \eqref{Useriesuf4} and \eqref{hbbarseriesuf4} therefore shows that \(U_\beta(\cdot,\cdot)\),
\(\bar h_\beta\) and \(h_\beta\) are all Riemann integrable.

\subsection{Sketch of Proof}
The proof of Theorem~\ref{thm:main} proceeds through the following steps.
\begin{enumerate}
    \item \textbf{Rademacher-Gaussian comparison.} We begin by comparing $\log\cZ_N(\beta,A_N)$ to the Gaussian
log-determinant $-\frac12\log\det(I-\beta A_N/N)$ by successively
replacing the Rademacher spin coordinates with Gaussian ones (Proposition \ref{prop:RG-comparison}). The resulting comparison error can be expressed in terms of the diagonal profile of the underlying matrix. Consequently, the difference between $\log\cZ_N(\beta,A_N)$ and its mean-network counterpart $\log\cZ_N(\beta,\bm W_N)$ admits a decomposition into the log-determinant difference $L_N(A_N)-L_N(\bW_N)$ and a remainder involving the corresponding diagonal-profile errors for $A_N$ and $\bW_N$.

\item \textbf{Log-determinant CLT.} Next, we prove a CLT for the resulting log-determinant
difference $L_N(A_N)-L_N(\bW_N)$ (Theorem \ref{thm:logdet}). We expand this
difference as an infinite series of traces of powers of $R_N \bar{A}_N$, which, after an additional bias correction and appropriate rescaling, yields a leading linear term obeying a standard triangular-array
CLT with limiting variance governed by the transformed resolvent $U_\beta$. All the other higher-order trace terms have explicit limits, which contribute to the mean of the limiting Gaussian distribution.

\item \textbf{Tracking diagonal-profile errors.} The cumulative diagonal-profile errors are then shown to have an explicit probability limit (Proposition \ref{prop:diagonal-corrections}), which cancels out the asymptotic mean in the log-determinant CLT introduced by the higher-order trace terms, and establishes the mean-network-centered CLT.

\item \textbf{From mean-network to annealed-centered CLT.} In this final step, we use an edgewise moment-generating-function factorization to express the annealed log-partition function as the sum of its mean-network counterpart, an explicit bias term, and a smaller-order error. This bias term exactly cancels the corresponding bias appearing in the centering of the mean-network-centered CLT, thereby establishing the annealed-centered CLT with a bias-free centering.
\end{enumerate}

\begin{remark}
For the Ising model \eqref{isingmoddef2} with self-loops, it was shown in
\cite{mukherjee2026isinginference}, under the regularity and high-temperature
assumptions therein, that
\begin{equation}\label{prevpp4644}
    \sqrt{N}\left(
        \log\cZ_N(\beta,A_N)
        -\log\E\cZ_N(\beta,A_N)
    \right)
    \xrightarrow{\mathrm d}
    \cN\left(
        0,\frac{\beta^2}{4}
        \int_0^1 W(x,x)\bigl(1-W(x,x)\bigr)\dd x
    \right),
\end{equation}
where $A_N$ denotes the adjacency matrix of the graph with self-loops.
The model without self-loops considered here, can be obtained by setting
$W(x,x)=0$ for all $x\in[0,1]$. In this case, the limiting distribution in
\eqref{prevpp4644} is degenerate at zero. Thus, although this result
suggests that the fluctuations are of smaller order than $N^{-\frac{1}{2}}$,
it does not identify their precise scale or limiting distribution.
Theorem~\ref{thm:main} addresses this issue by establishing a CLT on
the $N$-scale, showing that the correct fluctuation scale is $N^{-1}$
whenever $\tau_\beta^2>0$.
\end{remark}

\subsection{AI Declaration}
GPT-5.6 Sol was used to assist with and verify computations. The authors assume responsibility for all content.

\section{Proof of Theorem~\ref{thm:main}}

In this section, we prove Theorem \ref{thm:main}. The following proposition gives a CLT for the log-partition function around its mean-network counterpart, a result that is central to our analysis and of interest in its own right.

\begin{proposition}\label{thm:mainprop}
Suppose that $\beta>0$ is such that \(\beta\|T_W\|_{\op}<1\). Then,
    \[
    N\left(
        \log\cZ_N(\beta,A_N)
        -\log\cZ_N(\beta,\bW_N)
        -\frac{\beta^2}{2N^2}\sum_{1\le i<j\le N}\omega_{ij}(1-\omega_{ij})
    \right)
    \xrightarrow{\mathrm d}
    \cN(0,\tau_\beta^2).
    \]
\end{proposition}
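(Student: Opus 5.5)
We follow the four-step route of the sketch, restricted to its first three steps. \textbf{Step 1 (comparison).} Write $L_N(M):=-\tfrac12\log\det(I-\beta M/N)$, which is the log of the Gaussian partition function $Z^G(\beta M/N)$. We replace the Rademacher spins one coordinate at a time by standard Gaussians, as in Proposition~\ref{prop:RG-comparison}. At each step we condition on the other coordinates and compare $\E e^{\sigma_i L_i+\beta M_{ii}/(2N)}$ with its Gaussian analogue; the mismatch comes from the fourth cumulant of $\sigma_i$ paired with $L_i^4$. After summing over $i$, this yields
\[
\log\cZ_N(\beta,M)=L_N(M)-\mathcal E_N(M)+o(N^{-1}),
\]
where $\mathcal E_N(M)$ is an explicit functional of the diagonal of the Gaussian resolvent. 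To leading order it is a quadratic form in the diagonal entries $\big((I-\beta M/N)^{-1}-I\big)_{ii}$, and it is of order one. We apply this identity to $M=A_N$ and to $M=\bW_N$ and subtract.

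\textbf{Step 2 (log-determinant CLT).} Set $R_N:=(I-\beta\bW_N/N)^{-1}$; its operator norm is uniformly bounded because $\beta\|T_W\|_{\op}<1$ and $\|\bW_N/N\|_{\op}\to\|T_W\|_{\op}$. Since $\|\bar A_N\|_{\op}=O_P(\sqrt N)$, we have $\|\beta R_N\bar A_N/N\|_{\op}=O_P(N^{-1/2})$, so the expansion
\[
L_N(A_N)-L_N(\bW_N)=\sum_{k\ge1}\frac{\beta^k}{2kN^k}\tr\big((R_N\bar A_N)^k\big)
\]
is legitimate. We treat the terms as follows.
\begin{itemize}
\item The $k=1$ term equals $\frac{\beta}{2N}\sum_{i<j}2(R_N)_{ij}\bar A_{ij}$, a sum of independent centred variables. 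We have $(R_N)_{ij}=\delta_{ij}+\frac{\beta}{N}U_\beta(i/N,j/N)+o(1/N)$ uniformly off the diagonal, by Riemann integrability (Lemma~\ref{lem:riemann-approximation}). Multiplied by $N$, this term is therefore $\frac{\beta^2}{N}\sum_{i<j}U_\beta(i/N,j/N)\bar A_{ij}$ plus a negligible part. By the Lindeberg CLT, whose condition holds trivially since the summands are bounded by $O(1/N)$, it converges to a Gaussian with variance
\[
\tfrac{\beta^4}{2}\int U_\beta^2W(1-W).
\]
\item The $k=2$ term is $\frac{\beta^2}{4N^2}\tr\big((R_N\bar A_N)^2\big)$. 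Its mean is $\frac{\beta^2}{2N^2}\sum_{i<j}\omega_{ij}(1-\omega_{ij})$ plus corrections of order $N^{-1}$ coming from $R_N-I$. After multiplication by $N$, its fluctuation contributes a linear-in-$\bar A$ piece, because $\bar A_{ij}^2-\omega_{ij}(1-\omega_{ij})=(1-2\omega_{ij})\bar A_{ij}$. Combining this with the $k=1$ term replaces $U_\beta$ by $U_\beta-W+\tfrac12(1-2W)\cdots$. Expanding $(R_N)$ to first order, the combined coefficient becomes exactly $\tfrac12\big[1+2(U_\beta-W)\big]$. This is the reason the variance takes the form $\tau_\beta^2=\frac{\beta^4}{8}\int W(1-W)[1+2(U_\beta-W)]^2$.
\item The terms with $k\ge3$ are controlled by moment bounds on traces of Wigner-type matrices with a variance profile. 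Concretely, $N\cdot N^{-k}\,\E\big|\tr(R_N\bar A_N)^k-\E\tr(R_N\bar A_N)^k\big|\to0$, and the means $N^{1-k}\E\tr(\cdot)^k$ converge to explicit constants, which come from $k=3,4$ through cycle and double-edge counts. The tail $k\ge k_0$ is handled by the operator-norm bound.
\end{itemize}
The outcome, Theorem~\ref{thm:logdet}, is that $N\big(L_N(A_N)-L_N(\bW_N)-\frac{\beta^2}{2N^2}\sum_{i<j}\omega_{ij}(1-\omega_{ij})\big)$ converges to $\cN(\mu_{\det},\tau_\beta^2)$ for an explicit constant $\mu_{\det}$.

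\textbf{Step 3 (diagonal errors).} We show that $N\big(\mathcal E_N(A_N)-\mathcal E_N(\bW_N)\big)\to\mu_{\det}$ in probability; this is Proposition~\ref{prop:diagonal-corrections}. The diagonal entries satisfy $((I-\beta A_N/N)^{-1})_{ii}-((I-\beta\bW_N/N)^{-1})_{ii}=\frac{\beta^2}{N^2}\sum_j\bar A_{ij}^2+\dots$. Their leading shift is therefore a law-of-large-numbers quantity of order $1/N$, namely $\beta^2 s_W(i/N)/N$, which explains the appearance of $h_\beta$ and $\bar h_\beta$. Summing over $i$ produces a deterministic limit plus fluctuations of order $o(1)$. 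We then verify by direct computation that this limit equals $\mu_{\det}$, and conclude by Slutsky's lemma.

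\textbf{Main obstacle.} The hardest part is the bookkeeping in Step 2 combined with Step 3. The $O(1)$ means contributed by the $k=2,3,4$ trace terms and by the diagonal-profile errors must cancel exactly. This requires precise $o(1/N)$ expansions of $R_N$ and of the diagonal resolvent entries. It also requires showing that the genuinely random parts of the higher traces vanish after multiplication by $N$. I would first do the full computation in the Erd\H{o}s--R\'enyi case to check the cancellation, and then transfer it to general graphons via Riemann sums.
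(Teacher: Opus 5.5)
Your proposal is correct in outline and follows the paper's own route. It uses the Rademacher-to-Gaussian comparison with correction $\frac14\sum_i\{(I-B)^{-1}_{ii}-1\}^2$. It expands $\log\det(I-\beta R_N\bar A_N/N)$ in traces: the $k=1$ term and the $(1-2\omega_{ij})\bar A_{ij}$ part of $k=2$ combine into the coefficient $\frac12[1+2(U_\beta-W)]$, while the $(R_N-I)\bar A_N^2$ part of $k=2$ and the $k=4$ double-edge count produce the mean. The diagonal limits $\int_0^1 h_\beta^2$ and $\int_0^1\bar h_\beta^2$ then cancel that mean because $h_\beta=\bar h_\beta+\beta^2 s_W$.

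There are a few minor slips. First, $\mathcal E_N(M)$ is of order $N^{-1}$; only $N\mathcal E_N(M)$ is of order one. Second, the $k=3$ term has no limiting mean, so the constants come from $k=2$ and $k=4$, not $k=3$. Third, for merely Riemann-integrable $W$ the array $\frac N\beta(R_{N,ij}-\delta_{ij})$ converges to $U_\beta$ only in $L^2$ (Lemma~\ref{lem:discrete-resolvent}~(ii)), not uniformly at grid points; this $L^2$ convergence is all the variance computation needs.
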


\begin{proof}[Proof of Proposition \ref{thm:mainprop}]
    {To begin with, define:
    \begin{equation}\label{RNdef7224}
        R_N:=\left(I-\frac{\beta\bW_N}{N}\right)^{-1}
    \end{equation}
    Let \(\rho\) be such that $\beta\|T_W\|_{\op}<\rho<1$. Define the following  event:
\begin{equation}\label{epNrhodef}
    \cE_N^\rho:=\left\{\beta\left\|\frac{A_N}{N}\right\|_{\op}\le\rho\right\}.
\end{equation}
By Lemma \ref{lem:high-temperature} (ii), we know that $\mathbb{P}(\cE_N^\rho) \rightarrow 1$ as $N\rightarrow \infty$. Set 
\begin{equation}\label{rnhatdef7842}
 \widehat R_N
 :=
 \begin{cases}
 \left(I-\dfrac{\beta A_N}{N}\right)^{-1} &\text{on }\cE_N^\rho,\\[1ex]
 I &\text{on }(\cE_N^\rho)^c.
 \end{cases}
\end{equation}

This truncation keeps \(\widehat R_N\) well defined
even when \(I-\beta A_N/N\) fails to be invertible. Next, whenever
\(I-\beta M/N\) is positive definite, write
\(L_N(M):=\log\det(I-\beta M/N)\); for \(M=\bW_N\) this is used for all
sufficiently large \(N\), while for \(M=A_N\) we use it on \(\cE_N^\rho\)
and set \(L_N(A_N) =0\) on \((\cE_N^\rho)^c\), so
that \(L_N(A_N)\) is well defined on the whole probability space.} By
Lemma~\ref{lem:high-temperature} (i), for all sufficiently large \(N\), we have:
\[
\beta\left\|\frac{\bW_N}{N}\right\|_{\op}\le\rho.
\]
On the event \(\cE_N^\rho\),  Proposition~\ref{prop:RG-comparison}
applies to \(B = \beta M/N\) for both
\(M=A_N\) and \(M=\bW_N\), with
\(\delta=1-\rho\). Also, note that
\(\log Z^G(\beta M/N)=-L_N(M)/2\) on \(\cE_N^\rho\) for both these choices of $M$.  Applying the
proposition separately to \(A_N\) and \(\bW_N\), we thus have on \(\cE_N^\rho\):
\begin{align*}
&\log\cZ_N(\beta,A_N)-\log\cZ_N(\beta,\bW_N)
\\
&\quad=
\left[
-\frac12L_N(A_N)
-\frac14\sum_{i=1}^N\{\widehat R_{N,ii}-1\}^2
\right]
-
\left[
-\frac12L_N(\bW_N)
-\frac14\sum_{i=1}^N(R_{N,ii}-1)^2
\right]
+O_{\P}(N^{-2})
\\
&\quad=
-\frac12[L_N(A_N)-L_N(\bW_N)]
-\frac14\sum_{i=1}^N
\left[
(\widehat R_{N,ii}-1)^2
-(R_{N,ii}-1)^2
\right]
+O_{\P}(N^{-2}).
\end{align*}
Hence, on $\cE_N^\rho$, we have:
\begin{equation}\label{epfdl1}
\begin{aligned}
&N\Bigg(
\log\cZ_N(\beta,A_N)-\log\cZ_N(\beta,\bW_N)-\frac{\beta^2}{2N^2}\sum_{i<j}\omega_{ij}(1-\omega_{ij})
\Bigg)\\
&=-\frac N2 \left(L_N(A_N)-L_N(\bW_N)+\frac{\beta^2}{N^2}\sum_{i<j}\omega_{ij}(1-\omega_{ij})\right)\\
&\qquad-\frac N4\sum_{i=1}^N
\left[(\widehat R_{N,ii}-1)^2-(R_{N,ii}-1)^2\right]+o_{\P}(1).
\end{aligned}
\end{equation}
Now, note that the following remainder is well-defined on the whole probability space:
\[
\begin{aligned}
\mathfrak r_N
:={}&
\log\cZ_N(\beta,A_N)-\log\cZ_N(\beta,\bW_N)
+\frac12(L_N(A_N)-L_N(\bW_N))\\
&\quad
+\frac14\sum_{i=1}^N
\left[
(\widehat R_{N,ii}-1)^2-(R_{N,ii}-1)^2
\right].
\end{aligned}
\]
We have just shown that \(\mathfrak r_N = O_\P(N^{-2})\) on $\cE_N^\rho$.
Hence, for every \(\varepsilon>0\),
\[
\P\bigl(N|\mathfrak r_N|>\varepsilon\bigr)
\le
\P\bigl(N|\mathfrak r_N|>\varepsilon,~ \cE_N^\rho\bigr) + \P\bigl((\cE_N^\rho)^c\bigr)
\rightarrow0.
\]
Thus \(N\mathfrak r_N=o_{\P}(1)\), and \eqref{epfdl1} holds on the
whole probability space.

Now, by Proposition~\ref{prop:diagonal-corrections}~(iii)--(iv), we have:
\begin{eqnarray}\label{epfdl2}
-\frac N4\sum_{i=1}^N
\left[
(\widehat R_{N,ii}-1)^2
-(R_{N,ii}-1)^2
\right]
&\xrightarrow{\P}&
-\frac14\int_0^1
(h_\beta(x)^2-\bar h_\beta(x)^2)\d x\notag\\
&=&
-\frac{\beta^2}{2}\int_0^1\bar h_\beta(x)s_W(x)\d x
-\frac{\beta^4}{4}\int_0^1s_W(x)^2\d x,\notag\\
\end{eqnarray}
where we used the identity \(h_\beta=\bar h_\beta+\beta^2s_W\), which gives
\(
h_\beta^2-\bar h_\beta^2
=
2\beta^2\bar h_\beta s_W+\beta^4s_W^2.
\)
Theorem~\ref{thm:logdet} gives
\begin{equation}\label{epfdl3}
\begin{aligned}
&-\frac N2\Bigl(L_N(A_N)-L_N(\bW_N) +\frac{\beta^2}{N^2}\sum_{i<j}\omega_{ij}(1-\omega_{ij})\Bigr)\\
&\quad\xrightarrow{\mathrm d}
\cN\left(
\frac{\beta^2}{2}\int_0^1\bar h_\beta(x) s_W(x)\d x
+\frac{\beta^4}{4}\int_0^1s_W(x)^2\d x,\,
\tau_\beta^2
\right).
\end{aligned}
\end{equation}
Proposition \ref{thm:mainprop} now follows from \eqref{epfdl1}, \eqref{epfdl2} and \eqref{epfdl3}.
\end{proof}

We now complete the proof of Theorem \ref{thm:main} using Proposition \ref{thm:mainprop}. 
\begingroup
 To this end, for \(p\in[0,1]\) and
\(u\in\mathbb R\), define
\(
 \phi_p(u):=\log(1-p+pe^u).
\)
For \(t\ge0\), the even and odd parts of \(\phi_p\) in \(t\) are
\(\frac{\phi_p(t)+\phi_p(-t)}2\) and \(\frac{\phi_p(t)-\phi_p(-t)}2\), respectively.
We then have the following identity for \(s\in\{-1,1\}\):
\[
 1-p+pe^{ts}=\exp\left\{\frac{\phi_p(t)+\phi_p(-t)}2+s\cdot\frac{\phi_p(t)-\phi_p(-t)}2\right\}.
\]
Let us now define a symmetric hollow matrix \(\widetilde \bW_N\) by
\[
 \widetilde \omega_{ij}:=\frac{\phi_{\omega_{ij}}(\beta/N)-\phi_{\omega_{ij}}(-\beta/N)}{2(\beta/N)},
 \qquad i\ne j.
\]
By independence of the upper-triangular entries of \(A_N\), we have:
\begin{align}
 \E\cZ_N(\beta,A_N)
 &=2^{-N}\sum_{\bm{\sigma}\in\{-1,1\}^N}
 \prod_{i<j}
 \E\exp\left\{\frac{\beta}{N} A_{ij}\sigma_i\sigma_j\right\}\notag\\
 &=\exp\left\{\sum_{i<j}\frac{\phi_{\omega_{ij}}(\beta/N)+\phi_{\omega_{ij}}(-\beta/N)}2\right\}
 \cZ_N(\beta,\widetilde \bW_N).
 \label{eq:annealed-factorization}
\end{align}

By a Taylor expansion at \(t=0\), we have the following uniformly over \(p\in[0,1]\) and \(t\) in a fixed neighborhood of zero:

\[
 \frac{\phi_p(t)+\phi_p(-t)}2=\frac12p(1-p)t^2+O(t^4),
 \qquad
 \frac{\phi_p(t)-\phi_p(-t)}2 =pt+O(t^3).
\]
Consequently,
\begin{equation}\label{eq:annealed-prefactor}
\begin{aligned}
\sum_{i<j} \frac{\phi_{\omega_{ij}}(\beta/N)+\phi_{\omega_{ij}}(-\beta/N)}2
&= \frac{\beta^2}{2N^2}\sum_{i<j}\omega_{ij}(1-\omega_{ij})+O(N^{-2}),\\
\max_{i\ne j}|\widetilde \omega_{ij}-\omega_{ij}|&=O(N^{-2}).
\end{aligned}
\end{equation}
Moreover, \(0\leq \frac{1}{2}(\phi_p(t)-\phi_p(-t))\leq t\) for \(p\in[0,1]\) and \(t\geq0\), as
\[
 1\leq\frac{1-p+pe^t}{1-p+pe^{-t}}\leq e^{2t}.
\]
Thus the entries of \(\widetilde \bW_N\) belong to \([0,1]\). Next, set
\[
 B_N:=\frac{\beta}{N} \bW_N,
 \qquad
 \widetilde B_N:=\frac{\beta}{N} \widetilde \bW_N
 \qquad \text{and}\qquad
 D_N:=\widetilde B_N-B_N.
\]
Then \(D_N\) is hollow and it follows from \eqref{eq:annealed-prefactor} that
\[
 \max_{i,j}|D_{N,ij}|=O(N^{-3})
 \qquad \text{and} \qquad
 \|D_N\|_F+\|D_N\|_{\op}=O(N^{-2}).
\]
For \(u\in[0,1]\), let
\[
 B_N(u):=B_N+uD_N,
 \qquad
 R_N(u):=(I-B_N(u))^{-1}.
\]
Fix \(\rho\) such that \(\beta\|T_W\|_{\op}<\rho<1\), and choose any \(\rho'\in(\rho,1)\).
By Lemma~\ref{lem:high-temperature}~(i) and the bound
\(\|D_N\|_{\op}=O(N^{-2})\), we have for all sufficiently large \(N\),
\[
 \sup_{0\leq u\leq1}\|B_N(u)\|_{\op}\leq\rho'.
\]
Since \(\bW_N\) and \(\widetilde \bW_N\) are symmetric,
hollow, with entries in \([0,1]\), so is their convex combination
\((1-u)\bW_N+u\widetilde \bW_N\), and
\(B_N(u)=\frac{\beta}{N}\{(1-u)\bW_N+u\widetilde \bW_N\}\). Thus Proposition~\ref{prop:RG-comparison} applies to $B_N(u)$ uniformly in $u \in [0,1]$. 
Since \(\tr(D_N)=0\), we have:
\[
 \frac{\mathrm d}{\mathrm du}\log\det(I-B_N(u))
 =-\tr\{R_N(u)D_N\}
 =-\tr\{(R_N(u)-I)D_N\}.
\]
Since \(\|B_N(u)\|_F=O(1)\) and \(
\sup_{0\leq u\leq 1}\|R_N(u)\|_{\op}
\leq \frac{1}{1-\rho'}
\), we have: 
\[
 \|R_N(u)-I\|_F
 \leq \|R_N(u)\|_{\op}\|B_N(u)\|_F
 =O(1),
\]
and hence, by the Cauchy-Schwarz inequality,
\[
 |\tr\{(R_N(u)-I)D_N\}|
 \leq\|R_N(u)-I\|_F\|D_N\|_F
 =O(N^{-2}).
\]
Since \(B_N(0)=B_N\), \(B_N(1)=\widetilde B_N\), and
\[
\sup_{0\leq u\leq1}
\left|
\frac{\mathrm d}{\mathrm du}\log\det(I-B_N(u))
\right|
=O(N^{-2}),
\]
the fundamental theorem of calculus yields
\begin{equation}\label{eq:annealed-determinant-comparison}
\log\det(I-\widetilde B_N)-\log\det(I-B_N)
=
\int_0^1
\frac{\mathrm d}{\mathrm du}\log\det(I-B_N(u))\,\mathrm du
=
O(N^{-2}).   
\end{equation}
Let \(\bm{e}_i\) denote the \(i^\mathrm{th}\) standard basis vector of \(\mathbb R^N\).
The Neumann series, together with the facts that \(B_N(u)\) is hollow,
its entries are bounded by \(\beta/N\), and its operator norm is bounded
by \(\rho'\), gives
\[
\begin{aligned}
 \max_{1\leq i\leq N}\sup_{0\leq u\leq1}
 |R_N(u)_{ii}-1|
 &=
 \sup_{0\leq u\leq1}\max_{1\leq i\leq N}
 \left|
 \sum_{m=2}^{\infty}
 \bm e_i^{\top}B_N(u)^m\bm e_i
 \right|\\
 &\leq
 \sup_{0\leq u\leq1}\max_{1\leq i\leq N}
 \sum_{m=2}^{\infty}
 \|B_N(u)\bm e_i\|_2^2
 \|B_N(u)\|_{\op}^{m-2}\\
 &\leq
 \frac{\beta^2}{N}\sum_{m=2}^{\infty}(\rho')^{m-2}
 =\frac{\beta^2}{N(1-\rho')}
 =O(N^{-1}).
\end{aligned}
\]
Next, note that
\[
 \frac{\mathrm d}{\mathrm du}R_N(u)
 =R_N(u)D_NR_N(u),
\]
so
\[
\max_{1\leq i\leq N}\sup_{0\leq u\leq1}
 \left|\frac{\mathrm d}{\mathrm du}R_N(u)_{ii}\right|
 \leq
 \sup_{0\leq u\leq1}
 \|R_N(u)\|_{\op}^2\|D_N\|_{\op}
 =O(N^{-2}).
\]
Combining all these, we obtain the following:
\begin{eqnarray*}
&&
\sup_{0\leq u\leq1}
\left|
\frac{\mathrm d}{\mathrm du}
\sum_{i=1}^N\{R_N(u)_{ii}-1\}^2
\right|
\\
&=&
2\sup_{0\leq u\leq1}
\left|
\sum_{i=1}^N
\{R_N(u)_{ii}-1\}
\frac{\mathrm d}{\mathrm du}R_N(u)_{ii}
\right|
\\
&\leq&
2N
\left(
\max_{1\leq i\leq N}\sup_{0\leq u\leq1}
|R_N(u)_{ii}-1|
\right)
\left(
\max_{1\leq i\leq N}\sup_{0\leq u\leq1}
\left|
\frac{\mathrm d}{\mathrm du}R_N(u)_{ii}
\right|
\right)
\\
&=&
2N\cdot O(N^{-1})\cdot O(N^{-2})
=O(N^{-2}).
\end{eqnarray*}
It follows that
\begin{equation}
 \sum_{i=1}^N
 \left[(R_N(1)_{ii}-1)^2-(R_N(0)_{ii}-1)^2\right]
 =O(N^{-2}).
 \label{eq:annealed-diagonal-comparison}
\end{equation}
Applying Proposition~\ref{prop:RG-comparison} to \(B_N\) and
\(\widetilde B_N\), and using
\eqref{eq:annealed-determinant-comparison}, \eqref{eq:annealed-diagonal-comparison},
we obtain
\begin{eqnarray}
 \log\cZ_N(\beta,\widetilde \bW_N)
 -\log\cZ_N(\beta,\bW_N)
 &=&
 -\frac12\left[
 \log\det(I-\widetilde B_N)-\log\det(I-B_N)
 \right]
 \nonumber\\
 &-&
 \frac14\sum_{i=1}^N
 \left[
 (R_N(1)_{ii}-1)^2-(R_N(0)_{ii}-1)^2
 \right]
 +O(N^{-2})
 \nonumber\\
 &=&O(N^{-2}),
 \label{eq:annealed-effective-comparison}
\end{eqnarray}

Combining \eqref{eq:annealed-factorization},
\eqref{eq:annealed-prefactor}, and
\eqref{eq:annealed-effective-comparison}, we have:
\begin{equation*}
 \log\E\cZ_N(\beta,A_N)
 =\log\cZ_N(\beta,\bW_N)+\frac{\beta^2}{2N^2}\sum_{i<j}\omega_{ij}(1-\omega_{ij})+O(N^{-2}).
\end{equation*}
Therefore,
\begin{eqnarray*}
 &&N\left(
 \log\cZ_N(\beta,A_N)
 -\log\E\cZ_N(\beta,A_N)
 \right)\\
 &=&
 N\left(
 \log\cZ_N(\beta,A_N)-\log\cZ_N(\beta,\bW_N)-\frac{\beta^2}{2N^2}\sum_{i<j}\omega_{ij}(1-\omega_{ij})
 \right)+O(N^{-1}).
\end{eqnarray*}
Theorem \ref{thm:main} now follows from
Proposition \ref{thm:mainprop} and Slutsky's
theorem.
\endgroup

\section{Main ingredients for the proof of Theorem \ref{thm:main}}

The proof of Theorem \ref{thm:main} hinges on Proposition \ref{thm:mainprop}, which in turn relies heavily on the three results presented in this section.

\begin{ppn}
\label{prop:RG-comparison}
Let \(M\) be a symmetric hollow \(N\times N\) matrix with entries in
\([0,1]\), and set \(B=\beta M/N\).  Suppose that
\(\lambda_{\max}(B)\le1-\delta\) for some \(\delta>0\).  Then, $Z^G(B)=\det(I-B)^{-1/2}$. Further,
uniformly over all such matrices \(M\), we have:
\begin{equation}\label{logfbminusgb195462}
    \log \cZ_N(\beta,M)-\log Z^G(B)
=
-\frac14\sum_{i=1}^N
\left\{(I-B)^{-1}_{ii}-1\right\}^2
+O_{\beta,\delta}(N^{-2}).
\end{equation}
\end{ppn}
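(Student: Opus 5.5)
The identity $Z^G(B)=\det(I-B)^{-1/2}$ is the standard Gaussian integral: $\lambda_{\max}(B)\le 1-\delta$ makes $I-B$ positive definite, so $\E\exp(\tfrac12\bm Z^\top B\bm Z)=\det(I-B)^{-1/2}$. For \eqref{logfbminusgb195462} I plan a Lindeberg-type swap that replaces the Rademacher coordinates by Gaussian ones one at a time.

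\textbf{Setup.} For $0\le k\le N$, let $\bm x^{(k)}$ have coordinates $Z_1,\dots,Z_k$ i.i.d.\ $\cN(0,1)$ and $\sigma_{k+1},\dots,\sigma_N$ i.i.d.\ uniform on $\{-1,1\}$, all independent. Set $F_k:=\log\E\exp(\tfrac12\bm x^{(k)\top}B\bm x^{(k)})$. Then $F_0=\log\cZ_N(\beta,M)$ and $F_N=\log Z^G(B)$.

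\textbf{One swap.} Because $M$ is hollow, the exponent is affine in the $k$-th coordinate $x_k$:
\[
\tfrac12\bm x^\top B\bm x=\tfrac12\bm x_{-k}^\top B\bm x_{-k}+x_kL_k,\qquad L_k:=\sum_{j\ne k}B_{kj}x_j .
\]
Integrating out $x_k$ gives $\cosh(L_k)$ for a spin and $e^{L_k^2/2}$ for a Gaussian. Let $\nu_k$ be the probability measure on $\bm x_{-k}$ proportional to $\exp(\tfrac12\bm x_{-k}^\top B\bm x_{-k}+L_k^2/2)$. Then
\[
F_{k-1}-F_k=\log\E_{\nu_k}\bigl[\cosh(L_k)e^{-L_k^2/2}\bigr]=\log\E_{\nu_k}\bigl[1-\tfrac{L_k^4}{12}+O(L_k^6)\bigr].
\]
Here I use $\log\cosh t-t^2/2=-t^4/12+O(t^6)$ together with $|\log\cosh t-t^2/2|\lesssim t^4$ globally, so large $|L_k|$ is handled by moment bounds. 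Each step is therefore $-\tfrac1{12}\E_{\nu_k}L_k^4+O(\E_{\nu_k}L_k^6+(\E_{\nu_k}L_k^4)^2)$.

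\textbf{Moments under $\nu_k$.} I need three estimates, uniform in $k$ and in admissible $M$.
\begin{enumerate}[label=(\alph*)]
\item $\E_{\nu_k}L_k^{2p}\lesssim_{\beta,\delta,p}N^{-p}$.
\item $\E_{\nu_k}L_k^4=3\bigl(\E_{\nu_k}L_k^2\bigr)^2+O(N^{-3})$.
\item $\E_{\nu_k}L_k^2=(I-B)^{-1}_{kk}-1+O(N^{-2})$.
\end{enumerate}
If all coordinates were Gaussian, $\nu_k$ would be the Gaussian with precision $I-B$ restricted to $\bm x_{-k}$. Then $L_k$ would be exactly Gaussian with variance $b^\top\bigl((I-B)_{-k,-k}-bb^\top\bigr)^{-1}b$, where $b=Be_k$. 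A Schur complement shows this equals $(I-B)^{-1}_{kk}-1$ exactly, since $\bigl((I-B)^{-1}_{kk}\bigr)^{-1}=1-b^\top\bigl((I-B)_{-k,-k}\bigr)^{-1}b$.

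For the hybrid measures, the spin coordinates change only fourth and higher cumulants. I plan to justify this by a further Gaussian–spin interpolation, or by cluster expansion in the weak couplings $B_{ij}=O(1/N)$. Two facts keep the corrections small.
\begin{itemize}
\item $\|Be_k\|_2\le\beta/\sqrt N$, so $L_k$ is a weighted sum with small weights.
\item $\lambda_{\max}(B)\le1-\delta$ gives uniform sub-Gaussian control of $\bm x$ under the tilted hybrid measures, for instance via a Hubbard–Stratonovich decoupling $e^{\frac12\bm x^\top B\bm x}$ after shifting $B$ by a multiple of $I$ to make it positive semidefinite.
\end{itemize}
These should give (a), fourth-cumulant corrections of order $\sum_j B_{kj}^4=O(N^{-3})$ for (b), and covariance perturbations of order $O(N^{-2})$ in $\E L_k^2$ for (c).

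\textbf{Summing the swaps.} Since $(I-B)^{-1}_{kk}-1=O(1/N)$ by the Neumann series bound used later in the paper, each step equals
\[
F_{k-1}-F_k=-\tfrac14\bigl\{(I-B)^{-1}_{kk}-1\bigr\}^2+O(N^{-3}).
\]
Summing over $k=1,\dots,N$ gives \eqref{logfbminusgb195462} with error $O_{\beta,\delta}(N^{-2})$.

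\textbf{Main obstacle.} The hard step is (c) together with the uniform moment bounds (a) under the mixed Rademacher/Gaussian tilted measures $\nu_k$. The required $O(N^{-2})$ accuracy in the variance is exactly what makes the summed error $O(N^{-2})$. I would attack it first by expanding both the hybrid and the pure Gaussian covariance of $\bm x_{-k}$ in powers of $B$ and comparing term by term: the only discrepancies come from diagonal moments $\E x_j^4$, each weighted by factors of order $N^{-2}$.
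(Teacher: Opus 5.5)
Your architecture is sound, and your leading-term identification is cleaner than the paper's. You swap in log-space with the tilted measures $\nu_k$ and use the exact Gaussian identity $\operatorname{Var}(L_k)=(I-B)^{-1}_{kk}-1$, which gives $-\frac14\{(I-B)^{-1}_{kk}-1\}^2$ per step directly. The paper instead swaps the unnormalized $\cZ_N$, obtains $-\frac14\sum_i q_i^2(1-q_i)^{1/2}$ relative to $Z^G(B)$, and only then takes logarithms and re-expands.

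The gap is that your (a)--(c) for the mixed Rademacher/Gaussian tilted measures are asserted (``these should give'') rather than proved. This is where essentially all of the technical work of the proposition lies; the paper devotes both appendix lemmas to it. Two of the routes you suggest would not work as stated:
\begin{itemize}
\item A standard convergent cluster (polymer) expansion in the couplings needs the row sums $\sum_j B_{kj}$ to be below a small absolute constant. The hypothesis $\lambda_{\max}(B)\le1-\delta$ allows row sums near $1-\delta$; take $M$ to be the hollow all-$p$ matrix with $\beta p$ close to $1$.
\item A term-by-term comparison of power series in $B$ for the covariance of a normalized spin measure is only formal. It needs convergence of those series uniformly in $N$ and $M$ throughout this regime, which you do not address.
\end{itemize}

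What actually closes the gap is the interpolation you mention only in passing, together with an a priori domination bound.

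First, the domination bound. For a hybrid vector $\bm X$ and a symmetric $Q$ with nonnegative diagonal and $\lambda_{\max}(Q)<1$, one has $\E e^{\bm X^\top Q\bm X/2}\le\det(I-Q)^{-1/2}$. This follows coordinatewise from $e^{a/2}\cosh\ell\le(1-a)^{-1/2}e^{\ell^2/(2(1-a))}$. Polynomial weights are absorbed via $|x|^m\lesssim e^{\varepsilon x^2/2}$ into a low-rank perturbation of $Q$; this is Lemma~\ref{lem:hybrid-domination}. Combine it with three facts:
\begin{itemize}
\item the Jensen bound $\ge1$ for your normalizers, since $B$ is hollow and the coordinates are independent and centered;
\item the tilted Gaussian normalizer equals $\det(I-B)^{-1/2}$, by the Schur complement;
\item $\det(I-B)^{-1/2}=O_{\beta,\delta}(1)$.
\end{itemize}
Together these give (a).

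Second, the Gaussianization. Swap the remaining spins of $\nu_k$ into Gaussians one coordinate at a time. Do this separately in the numerators and in the denominator of $\E_{\nu_k}L_k^2$ and $\E_{\nu_k}L_k^4$. Rademacher and Gaussian moments of orders $1,2,3,5$ match, so each swap costs only a fourth-order term. Relative to the normalizer, that term is $O(N^{-2})$, $O(N^{-3})$ and $O(N^{-4})$ for the observables $1$, $L_k^2$ and $L_k^4$ respectively. After $N$ swaps the normalizer has relative error $O(N^{-1})$, and the numerators have absolute errors $O(N^{-2})$ and $O(N^{-3})$. That is just enough for (c) and (b), because $\E_{\nu_k}L_k^2=O(N^{-1})$ and $\E_{\nu_k}L_k^4=O(N^{-2})$.

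This second step is the content of Lemma~\ref{lem:fourth-gaussianization}. The paper's unnormalized bookkeeping means it only has to Gaussianize the single observable $\E[e^{\bm Y^\top B\bm Y/2}L_i(\bm Y)^4]$ and never has to compare hybrid normalizers. The price is one $\log(1+x_N)$ step at the end.
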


\begin{proof}
Since
\(\lambda_{\max}(B)\le1-\delta\), the matrix \(I-B\) is positive
definite. Hence, the Gaussian integral gives us:
\[Z^G(B)
=
(2\pi)^{-N/2}\int_{\mathbb R^N}
e^{-\bm{z}^{\top}(I-B)\bm{z}/2}\,\mathrm{d}\bm{z}
=
\det(I-B)^{-1/2}.
\]

Next, consider the successive replacement of the
Gaussian coordinates by Rademacher coordinates.  For
\(0\le i\le N\), let
\[
\bm Y_{(i)}
:=
(\sigma_1,\ldots,\sigma_i,Z_{i+1},\ldots,Z_N),
\]
so that \(\bm Y_{(0)}=\bm{Z}\sim \cN(\bm 0, I_N)\) and \(\bm Y_{(N)}=\bm{\sigma}\sim \mathrm{Unif}(\{-1,1\}^N)\).  Thus,
\[
\cZ_N(\beta,M)-Z^G(B)
=
\sum_{i=1}^N
\left[
\E \exp\left\{\frac{1}{2}\bm Y_{(i)}^{\top}B \bm Y_{(i)}\right\}-\E \exp\left\{\frac12 \bm Y_{(i-1)}^{\top}B \bm Y_{(i-1)}\right\}
\right].
\]
Let
\(\bm Y_{(i,0)}:=(\sigma_1,\ldots,\sigma_{i-1},0,Z_{i+1},\ldots,Z_N)\).
Since \(B_{ii}=0\),
\[
\exp\left\{\frac12(\bm Y_{(i,0)}+u\bm{e}_i)^{\top}B(\bm Y_{(i,0)}+u\bm{e}_i)\right\}
=
\exp\left\{\frac12\bm Y_{(i,0)}^\top B \bm Y_{(i,0)}\right\}e^{uL_i(\bm Y_{(i,0)})}.
\]
Conditioning on \(\bm Y_{(i,0)}\), we
therefore obtain
\[
\begin{aligned}
&\E_{\sigma_i}\exp\left\{\frac12(\bm Y_{(i,0)}+\sigma_i\bm{e}_i)^{\top}B(\bm Y_{(i,0)}+\sigma_i\bm{e}_i)\right\}
-\E_{Z_i}\exp\left\{\frac12(\bm Y_{(i,0)}+Z_i\bm{e}_i)^{\top}B(\bm Y_{(i,0)}+Z_i\bm{e}_i)\right\}\\
&=\exp\left\{\frac12\bm Y_{(i,0)}^\top B \bm Y_{(i,0)}\right\}(\cosh L_i(\bm Y_{(i,0)})-e^{L_i(\bm Y_{(i,0)})^2/2}).
\end{aligned}
\]

Define \(r(x):=\cosh x-e^{x^2/2}+x^4/12\). Taylor's theorem gives
\(|r(x)|\le K|x|^6\) for \(|x|\le1\).  On the other hand, for \(|x|>1\), using \(\cosh x\le e^{|x|} \le e^{1/2}e^{x^2/2}\), we obtain
\[
|r(x)|
\le e^{|x|}+e^{x^2/2}+\frac{x^4}{12}
\le K|x|^6e^{x^2/2}.
\]
  Thus,
\begin{equation}\label{eq:r-bound}
|r(x)|\lesssim |x|^6e^{x^2/2},
\qquad x\in\mathbb R.
\end{equation}
Summing the coordinate replacements now gives
\begin{equation}\label{eq:first-Lindeberg}
\cZ_N(\beta,M)-Z^G(B)
=
-\frac1{12}\sum_{i=1}^N
\E\left(\exp\left\{\frac12\bm Y_{(i,0)}^\top B \bm Y_{(i,0)}\right\} L_i(\bm Y_{(i,0)})^4\right)
+\mathcal R_N,
\end{equation}
where
\[
\mathcal R_N
:=
\sum_{i=1}^N
\E\left(\exp\left\{\frac12\bm Y_{(i,0)}^\top B \bm Y_{(i,0)}\right\}r(L_i(\bm Y_{(i,0)}))\right).
\]

Let $\bm b_i$ denote the $i^\mathrm{th}$ column of $B$. Since \(B=\beta M/N\) and \(0\le M_{ij}\le1\),
\(\|\bm{b}_i\|_2^2\le\beta^2/N\).  Hence, for all sufficiently large \(N\),
\(\|\bm{b}_i\|_2^2\le\delta/4\).  Applying
Lemma~\ref{lem:hybrid-domination}~(iii) with \(\bm{u}_0=\bm{u}_1=\bm{b}_i\) and \(m_1=6\),
together with \eqref{eq:r-bound}, gives
\[
|\mathcal R_N|
\lesssim_{\beta,\delta}
Z^G(B)\sum_{i=1}^N\|\bm{b}_i\|_2^6
\lesssim_{\beta,\delta}
Z^G(B) N^{-2}.
\]

By Lemma~\ref{lem:fourth-gaussianization}, the hybrid fourth-order
expectations in \eqref{eq:first-Lindeberg} may be replaced by their
Gaussian counterparts at a total cost
\(O_{\beta,\delta}(Z^G(B) N^{-2})\).  Let \(\bm Z_{(i,0)}\) be a standard
Gaussian vector with its \(i^\mathrm{th}\) coordinate set to zero.  Then, we have:
\begin{equation}\label{eq:Gaussianized-fourth-term}
\cZ_N(\beta,M)- Z^G(B)
=
-\frac1{12}\sum_{i=1}^N
\E\left(\exp\left\{\frac12\bm Z_{(i,0)}^\top B \bm Z_{(i,0)}\right\} L_i(\bm Z_{(i,0)})^4\right)
 +
 O_{\beta,\delta}(Z^G(B) N^{-2}).
\end{equation}

Let \(B^{(i)}\) be the principal submatrix obtained by deleting the
\(i\)-th row and column of \(B\), and let \(\bm{b}_i^{(-i)}\) be the
\(i\)-th column of \(B\) with its \(i\)-th entry removed.  Set
\[
R^{(i)}:=(I-B^{(i)})^{-1},
\qquad
q_i:=(\bm{b}_i^{(-i)})^{\top}R^{(i)}\bm{b}_i^{(-i)}.
\]
If \(\widetilde{\bm{Z}}^{(i)}\sim\cN(\bm{0},R^{(i)})\), then note that:
\[
\begin{aligned}
&\E\left(\exp\left\{\frac12\bm Z_{(i,0)}^\top B \bm Z_{(i,0)}\right\} L_i(\bm Z_{(i,0)})^4\right)\\
&\qquad=
\det(I-B^{(i)})^{-1/2}
\E\left[
\{(\bm{b}_i^{(-i)})^{\top}\widetilde{\bm{Z}}^{(i)}\}^4
\right]\\
&\qquad=
3q_i^2\det(I-B^{(i)})^{-1/2}.
\end{aligned}
\]
Moreover, the Schur complement identity gives
\[
\det(I-B)
=
\det(I-B^{(i)})(1-q_i).
\]
Substituting these identities into
\eqref{eq:Gaussianized-fourth-term}, we obtain
\begin{equation}\label{eq:ratio-expansion}
x_N := \frac{\cZ_N(\beta,M)-Z^G(B)}{Z^G(B)}
=
-\frac14\sum_{i=1}^Nq_i^2(1-q_i)^{1/2}
+
O_{\beta,\delta}(N^{-2}).
\end{equation}
Now, by eigenvalue interlacing, we can conclude that the smallest eigenvalue of \(I-B^{(i)}\)
is at least \(\delta\).  Hence,
\[
0\le q_i
\le
\delta^{-1}\|\bm{b}_i^{(-i)}\|_2^2
\le
\frac{\beta^2}{\delta N},
\qquad
\sum_{i=1}^Nq_i^3=O_{\beta,\delta}(N^{-2}).
\]
Since
\(\left|(1-q_i)^{1/2}-1\right|\le q_i\), we have from \eqref{eq:ratio-expansion},
\[
x_N
=
-\frac14\sum_{i=1}^Nq_i^2
+
O_{\beta,\delta}(N^{-2}),
\qquad
|x_N|=O_{\beta,\delta}(N^{-1}).
\]
Consequently,
\begin{equation}\label{eq:log-ratio-expansion}
\log \cZ_N(\beta,M) -\log Z^G(B)
=
\log(1+x_N)
=
-\frac14\sum_{i=1}^Nq_i^2
+
O_{\beta,\delta}(N^{-2}).
\end{equation}

Now, the matrix $I-B$ can be written in the following block form, with the \(i\)th coordinate listed first:
\[
I-B
=
\begin{pmatrix}
1 & -(\bm{b}_i^{(-i)})^{\top}\\
-\bm{b}_i^{(-i)} & I-B^{(i)}
\end{pmatrix}.
\]
The block inverse formula therefore gives
\((I-B)^{-1}_{ii}=(1-q_i)^{-1}\), and hence
\[
\left((I-B)^{-1}_{ii}-1\right)^2
=
\frac{q_i^2}{(1-q_i)^2}
=
q_i^2+O_{\beta,\delta}(q_i^3).
\]
Summing this over \(i\), and using
\(\sum_{i=1}^N q_i^3=O_{\beta,\delta}(N^{-2})\), \eqref{logfbminusgb195462}
follows from \eqref{eq:log-ratio-expansion}.
\end{proof}

\begin{ppn}\label{prop:diagonal-corrections}
Fix \(\rho\) such that \(\beta\|T_W\|_{\op}<\rho<1\), and let \(R_N\), \(\cE_N^\rho\) and \(\widehat R_N\) be as defined in \eqref{RNdef7224}, \eqref{epNrhodef} and \eqref{rnhatdef7842}, respectively. 
We have the following:
\begin{enumerate}
\item[(i)]\quad
\(
N^{-1}\sum_{i=1}^N
 \left[N(\widehat R_{N,ii}-1)-h_\beta(i/N)\right]^2
 \xrightarrow{\P}0.
\)
\item[(ii)] \quad
\(
 N^{-1}\sum_{i=1}^N
 \left[N(R_{N,ii}-1)-\bar h_\beta(i/N)\right]^2
 \rightarrow0.
\)
\item[(iii)] \quad
\(
 N\sum_{i=1}^N(\widehat R_{N,ii}-1)^2
 \xrightarrow{\P}\int_0^1h_\beta(x)^2\d x.
\)
\item[(iv)] \quad
\(
 N\sum_{i=1}^N(R_{N,ii}-1)^2
 \rightarrow\int_0^1\bar h_\beta(x)^2\d x.
\)
\end{enumerate}
\end{ppn}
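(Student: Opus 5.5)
Parts (iii) and (iv) follow from (i) and (ii), so I will prove (ii) and (i) first and deduce the rest.

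\textbf{Step 1: part (ii), deterministic.} Expand \(R_N\) in its Neumann series:
\[
N(R_{N,ii}-1)=\sum_{m\ge2}\beta^m N^{1-m}(\bW_N^m)_{ii}.
\]
The \(m=1\) term vanishes because \(\bW_N\) is hollow. For each fixed \(m\),
\[
N^{1-m}(\bW_N^m)_{ii}=N^{-(m-1)}\sum_{j_1,\dots,j_{m-1}}\omega_{ij_1}\omega_{j_1j_2}\cdots\omega_{j_{m-1}i}
\]
is a Riemann sum for \(W_m(i/N,i/N)\). The hollowness constraint removes only \(O(N^{m-2})\) terms, which is negligible. By Lemma~\ref{lem:riemann-approximation} (Riemann integrability of \(W_m\) and its diagonal), the averaged squared error \(N^{-1}\sum_i[\,\cdot\,-W_m(i/N,i/N)]^2\) tends to \(0\).

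For the tail, bound \(|(\bW_N^m)_{ii}|/N^{m-1}\le N\|\bW_N e_i\|_2^2\,\|\bW_N/N\|_{\op}^{m-2}/N\le \|\bW_N/N\|^{m-2}_{\op}\). Since \(\beta\|\bW_N/N\|_{\op}\le\rho\) for large \(N\) (Lemma~\ref{lem:high-temperature}(i)), the tail is summable uniformly in \(i\) and \(N\). Truncating at a large \(m_0\) then gives (ii).

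\textbf{Step 2: part (i), the random case.} On \(\cE_N^\rho\), write \(A_N=\bW_N+\bar A_N\) and use the Neumann series \(N(\widehat R_{N,ii}-1)=\sum_{m\ge2}\beta^mN^{1-m}(A_N^m)_{ii}\), with the tail controlled exactly as in Step 1 using \(\rho\). For fixed \(m\), expand \((A_N^m)_{ii}\) as a sum over closed walks. Comparing with \((\bW_N^m)_{ii}\), the difference comes from walks containing at least one \(\bar A\) factor.

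\begin{itemize}
\item \textbf{\(m=2\).} Here \((A_N^2)_{ii}=\sum_j A_{ij}^2=\sum_jA_{ij}\), which has mean \(\sum_j\omega_{ij}\). So \(N^{-1}(A_N^2)_{ii}\to d_W(i/N)\), not \(W_2(i/N,i/N)\). This is exactly the source of the extra \(\beta^2 s_W\) in \(h_\beta\). The fluctuation of \(N^{-1}\sum_jA_{ij}\) has variance \(O(1/N)\), so its averaged square vanishes.
\item \textbf{\(m\ge3\).} I would show \(\E\,N^{-1}\sum_i[N^{1-m}((A_N^m)_{ii}-\E (A_N^m)_{ii})]^2\to0\) by a standard walk-counting variance bound. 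I would also show \(N^{1-m}\E(A_N^m)_{ii}=N^{1-m}(\bW_N^m)_{ii}+o(1)\) uniformly, since walks that repeat an edge (where \(\E A^2_{ij}=\omega_{ij}\ne\omega_{ij}^2\)) lose a factor of \(N\).
\end{itemize}

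Alternatively, and more cleanly, I would write \(N^{1-m}(A^m-\bW^m)_{ii}\) as a sum of products containing \(\bar A_N/N\). I would then use \(\|\bar A_N/N\|_{\op}=O_\P(N^{-1/2})\) together with row-norm bounds \(\|\bar A_Ne_i\|_2^2\approx Ns_W(i/N)\) to isolate the only non-vanishing contribution, namely \(e_i^\top\bar A_N^2e_i/N\approx s_W(i/N)\) at \(m=2\). Off \(\cE_N^\rho\) the probability tends to zero, so convergence in probability is unaffected.

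\textbf{Step 3: parts (iii) and (iv).} Write \(N\sum_i(\widehat R_{N,ii}-1)^2=N^{-1}\sum_i[N(\widehat R_{N,ii}-1)]^2\). By (i) and the Cauchy--Schwarz (triangle) inequality in \(\ell^2\) with weights \(N^{-1}\), this is \(N^{-1}\sum_ih_\beta(i/N)^2+o_\P(1)\). The Riemann integrability of \(h_\beta\) (which is bounded) then gives convergence to \(\int_0^1 h_\beta^2\). Part (iv) follows from (ii) in the same way.

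\textbf{Main obstacle.} The main obstacle is Step 2 for \(m\ge3\): controlling, uniformly in \(m\) and averaged over \(i\), the cross terms involving \(\bar A_N\), while keeping the geometric tail summable. I would handle this by proving a bound of the form \(C m^k\rho^{m}N^{-1/2}\) for the \(m\)-th mixed contribution via operator-norm estimates on \(\cE_N^\rho\), reserving moment computations for the \(m=2\) diagonal term.
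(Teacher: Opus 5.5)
Your main line is the paper's argument, and it suffices. It consists of the Neumann series on $\cE_N^\rho$ with the deterministic tail bound from $\|A_N\bm e_i\|_2^2\le N$, and the special $m=2$ identity $(A_N^2)_{ii}=\sum_j A_{ij}$, which produces $d_W$ rather than $W_2$ and hence the extra $\beta^2 s_W$. It then uses fixed-$m$ closed-walk bias and variance bounds for $m\ge3$ (Lemma~\ref{lem:random-diagonal-powers} in the paper), Riemann sums for the deterministic part (Lemma~\ref{lem:discrete-resolvent}(iii),(v)), and Cauchy--Schwarz for (iii)--(iv). In Step 1 the middle expression should read $N^{-1}\|\bW_N\bm e_i\|_2^2\|\bW_N/N\|_{\op}^{m-2}$; your final bound is correct.

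The ``cleaner'' alternative and the plan in your ``Main obstacle'' paragraph would fail, and neither is needed. For $X_r\in\{\bW_N,\bar A_N\}$, the bound
\[
N\,\bm e_i^{\top}(X_1/N)\cdots(X_m/N)\bm e_i\;\le\; N^{-1}\|X_1\bm e_i\|_2\|X_m\bm e_i\|_2\prod_{r=2}^{m-1}\|X_r/N\|_{\op}
\]
gains a vanishing factor only when some interior $X_r$ equals $\bar A_N$. This is because both endpoint vectors can have norm of order $\sqrt N$. Words with $\bar A_N$ only in the first or last slot therefore get only an $O(1)$ bound. Examples are $N^{-1}(\bW_N\bar A_N)_{ii}$, $N^{1-m}(\bW_N^{m-1}\bar A_N)_{ii}$ and $N^{1-m}(\bar A_N\bW_N^{m-2}\bar A_N)_{ii}$.

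These terms do vanish, but only by second-moment arguments that use centering, independence and hollowness. For instance, $\E(\bar A_N\bW_N\bar A_N)_{ii}=\sum_j\omega_{jj}\omega_{ij}(1-\omega_{ij})=0$. So you need the walk-counting argument for every $m$, not just the $m=2$ diagonal term.

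The uniformity in $m$ that motivated this route is also unnecessary. On $\cE_N^\rho$ the random tail $\sum_{m>M}$ is bounded by $\beta^2\rho^{M-1}/(1-\rho)$ uniformly in $i$ and $N$, and the continuum tail is bounded by $\beta^2q^{M-1}/(1-q)$. Hence fixed-$m$ bounds with $m$-dependent constants suffice: let $N\to\infty$ first and then $M\to\infty$, exactly as the paper does.
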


\begin{proof}
For notational convenience, let us write \(A=A_N\).
On $\cE_N^{\rho}$, we have:
\begin{equation}\label{eq:random-resolvent-series}
 N(\widehat R_{N,ii}-1)
 =\sum_{m=2}^{\infty}\beta^m C_{m,N}(i),
 \qquad
 C_{m,N}(i):=N\left(\frac{A}{N}\right)^m_{ii}.
\end{equation}

To control the tail in \eqref{eq:random-resolvent-series}, fix $M\ge2$ and
put $B_N=\beta A/N$.  On $\cE_N^{\rho}$,
\[
 \left|N\sum_{m>M}\bm{e}_i^{\top}B_N^m\bm{e}_i\right|
 \le N\|B_N\bm{e}_i\|_2^2\sum_{m>M}\rho^{m-2}
 \le\frac{\beta^2\rho^{M-1}}{1-\rho},
\]
since $\|B_N\bm{e}_i\|_2^2\le\beta^2/N$.
Write
\[
 x_{i,N}:=N(\widehat R_{N,ii}-1).
\]
For every fixed \(M\ge2\), on \(\cE_N^\rho\), we have:
\[
 \begin{aligned}
 x_{i,N}-h_\beta(i/N)
 &=
 \beta^2(C_{2,N}(i)-d_W(i/N))
 +\sum_{m=3}^{M}\beta^m
 (C_{m,N}(i)-W_m(i/N,i/N))\\
 &\quad
 +r_{i,N}^{(M)}-\bar r_{i,N}^{(M)},
 \end{aligned}
\]
where
\[
 r_{i,N}^{(M)}
 :=
 \sum_{m>M}\beta^mC_{m,N}(i),
 \qquad
 \bar r_{i,N}^{(M)}
 :=
 \sum_{m>M}\beta^mW_m(i/N,i/N).
\]
This tail estimate and Lemma~\ref{lem:discrete-resolvent}~(iv) give,
uniformly in \(N\) and \(i\),
\[
 |r_{i,N}^{(M)}|
 \le
 \frac{\beta^2\rho^{M-1}}{1-\rho},
 \qquad
 |\bar r_{i,N}^{(M)}|
 \le
 \frac{\beta^2q^{M-1}}{1-q},
 \qquad
 q:=\beta\|T_W\|_{\op}<1.
\]
For the finite sum, using the Cauchy-Schwarz inequality
\[
 \left|\sum_{m=2}^{M}a_m\right|^2
 \le
 (M-1)\sum_{m=2}^{M}|a_m|^2,
\]
together with Lemma~\ref{lem:random-diagonal-powers}, we obtain the following
for every fixed \(M\),
\[
 \begin{aligned}
 &\E\left[
 \frac1N\sum_{i=1}^N
 \left|
 \beta^2(C_{2,N}(i)-d_W(i/N))
 +\sum_{m=3}^{M}\beta^m
 (C_{m,N}(i)-W_m(i/N,i/N))
 \right|^2
 \right]\\
 &\qquad\le
 (M-1)\Biggl\{
 \beta^4\frac1N\sum_{i=1}^N
 \E\left[
 \left(C_{2,N}(i)-d_W(i/N)\right)^2
 \right]\\
 &\qquad\quad+\sum_{m=3}^{M}\beta^{2m}
 \frac1N\sum_{i=1}^N
 \E\left[
 \left(C_{m,N}(i)-W_m(i/N,i/N)\right)^2
 \right]
 \Biggr\}
 \rightarrow0.
 \end{aligned}
\]
Part~\textnormal{(i)} follows by first letting \(N\to\infty\), then
\(M\to\infty\), and using \(\P((\cE_N^\rho)^c)\to0\) from
Lemma~\ref{lem:high-temperature}~(ii).
For the deterministic resolvent,
Lemma~\ref{lem:discrete-resolvent}~(v) gives
\[
\frac1N\sum_{i=1}^N
\left[N(R_{N,ii}-1)-\bar h_\beta(i/N)\right]^2
\rightarrow0,
\]
which proves part~\textnormal{(ii)}.

For part~\textnormal{(iii)}, set
\[
x_{i,N}:=N(\widehat R_{N,ii}-1),
\qquad
h_{i,N}:=h_\beta(i/N).
\]
By the Cauchy-Schwarz inequality, we have
\[
\begin{aligned}
\left|
\frac1N\sum_{i=1}^N x_{i,N}^2
-
\frac1N\sum_{i=1}^N h_{i,N}^2
\right|
\le
\left[
\frac1N\sum_{i=1}^N(x_{i,N}-h_{i,N})^2
\right]^{1/2}\cdot
\left[
\frac1N\sum_{i=1}^N(x_{i,N}+h_{i,N})^2
\right]^{1/2}.
\end{aligned}
\]
Moreover, by part~\textnormal{(i)} and the boundedness of \(h_\beta\), we have:
\[
\begin{aligned}
\left[
\frac1N\sum_{i=1}^N(x_{i,N}+h_{i,N})^2
\right]^{1/2}
&\le
\left[
\frac1N\sum_{i=1}^N(x_{i,N}-h_{i,N})^2
\right]^{1/2}+
2\left[
\frac1N\sum_{i=1}^N h_{i,N}^2
\right]^{1/2}
=O_{\P}(1).
\end{aligned}
\]
Consequently,
\[
\frac1N\sum_{i=1}^N x_{i,N}^2
-
\frac1N\sum_{i=1}^N h_\beta(i/N)^2
\xrightarrow{\P}0.
\]
Since \(h_\beta^2\) is Riemann integrable, part (iii) follows from the Riemann-sum convergence and the identity
\[
\frac1N\sum_{i=1}^N x_{i,N}^2
=
N\sum_{i=1}^N(\widehat R_{N,ii}-1)^2.
\]
Applying the same argument with
\(N(R_{N,ii}-1)\), \(\bar h_\beta(i/N)\), and part~\textnormal{(ii)}
proves part~\textnormal{(iv)}, thereby completing the proof of Proposition \ref{prop:diagonal-corrections}.
\end{proof}

\begin{thm}\label{thm:logdet}
 Suppose that $\beta>0$ satisfies $\beta\|T_W\|_{\op}<1$. 
Define
\[
 \mudet
 :=
 -\beta^2\int_0^1\bar h_\beta(x)s_W(x)\d x
 -\frac{\beta^4}{2}\int_0^1s_W(x)^2\d x.
\]
Then, as $N\rightarrow \infty$,
\[
 N\left\{
 L_N(A_N)-L_N(\bW_N)
 +\frac{\beta^2}{N^2}\sum_{i<j}\omega_{ij}(1-\omega_{ij})
 \right\}
 \xrightarrow{\mathrm d}
 \cN(\mudet,4\sbw^2).
\]
\end{thm}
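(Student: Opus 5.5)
On $\cE_N^\rho$ I would write $I-\beta A_N/N=(I-\beta\bW_N/N)(I-R_N\beta\bar A_N/N)$, so that with $X_N:=\beta R_N\bar A_N/N$ we get
\[
L_N(A_N)-L_N(\bW_N)=\log\det(I-X_N)=-\sum_{k\ge1}\frac1k\tr(X_N^k).
\]
The event $\cE_N^\rho$ has probability tending to one by Lemma~\ref{lem:high-temperature}. Convergence of the series needs $\|X_N\|_{\op}<1$. That bound is not automatic, since $\|\bar A_N/N\|_{\op}=O_\P(N^{-1/2})$, but it gives $\|X_N\|_{\op}=O_\P(N^{-1/2})$, so the series converges with high probability. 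The terms then split by size as follows.

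\emph{The linear term.} Here $-\tr(X_N)=-\frac{\beta}{N}\sum_{i<j}2R_{N,ij}\bar A_{ij}$, using that $R_N$ is symmetric and $\bar A_N$ is hollow. Multiplied by $N$, this is $-2\beta\sum_{i<j}R_{N,ij}\bar A_{ij}$, a sum of independent centred bounded variables. By the Neumann series, $N R_{N,ij}=\frac{\beta}{N}\cdot N\,\ldots$; more precisely $R_{N,ij}=\delta_{ij}+\frac{\beta}{N}U^{(N)}_{ij}$, where $U^{(N)}_{ij}\approx U_\beta(i/N,j/N)$ uniformly by Lemma~\ref{lem:discrete-resolvent}. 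So the scaled term is $-\frac{2\beta^2}{N}\sum_{i<j}U^{(N)}_{ij}\bar A_{ij}$, with variance
\[
\frac{4\beta^4}{N^2}\sum_{i<j}U_{ij}^2\omega_{ij}(1-\omega_{ij})\to 2\beta^4\int W(1-W)U_\beta^2 .
\]
This does not yet match $4\tau_\beta^2=\frac{\beta^4}{2}\int W(1-W)[1+2(U_\beta-W)]^2$. The mismatch tells me the quadratic term must also contribute a Gaussian piece, and I have to combine the two.

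\emph{The quadratic term.} I would expand
\[
-\tfrac12\tr(X_N^2)=-\frac{\beta^2}{2N^2}\sum_{i,j,k,l}R_{ij}\bar A_{jk}R_{kl}\bar A_{li}.
\]
Its main part, using $R\approx I$, is $\frac{\beta^2}{N^2}\sum_{i<j}\bar A_{ij}^2$. Writing $\bar A_{ij}^2=\omega_{ij}(1-\omega_{ij})+(1-2\omega_{ij})\bar A_{ij}$, the deterministic part $-\frac{\beta^2}{N^2}\sum_{i<j}\omega_{ij}(1-\omega_{ij})$ is exactly the bias removed in the theorem. The fluctuating part $-\frac{\beta^2}{N}\sum_{i<j}(1-2\omega_{ij})\bar A_{ij}$, after multiplying by $N$, is again linear in $\bar A$ at the same scale. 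Adding it to the linear term gives the coefficient
\[
-\beta^2\bigl[(1-2W)+2U_\beta\bigr]=-\beta^2\bigl[1+2(U_\beta-W)\bigr].
\]
The variance is then $\frac{2\beta^4}{2}\int W(1-W)[\cdots]^2$; I expect this to come out as $4\tau_\beta^2$ once the factor over unordered pairs is tracked carefully. Asymptotic normality of the combined linear statistic follows from the Lindeberg CLT, since every coefficient is $O(1/N)$ and the variance is $\Theta(1)$. The variance limit is a Riemann-sum statement, using Riemann integrability of $U_\beta$ (Lemma~\ref{lem:riemann-approximation}).

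\emph{The remaining pieces.} These are the off-diagonal part of $\tr(X_N^2)$, the corrections from $R-I$ in it, and the terms $\tr(X_N^k)$ for $k\ge3$. I would show that each, multiplied by $N$, converges in $L^1$ or in probability to a constant, via moment computations: take expectations over index paths and note that only paths in which every $\bar A$ edge appears exactly twice survive at order one. For example, $N\tr(X_N^2)$ with the $R-I$ corrections produces $\beta^2\sum_i$(diagonal of $R-I$)$\cdot$(row variance), giving $-\beta^2\int\bar h_\beta s_W$. Similarly, the $k=4$ backtracking term gives $-\frac{\beta^4}{2}\int s_W^2$. Terms with $k=3$ and $k\ge5$ should vanish in the limit, as should the variance of the non-linear quadratic part (e.g.\ $\sum_{i\ne l}\bar A_{ij}\bar A_{jl}$-type sums have variance $O(N^{-1})$ after scaling). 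The tail $\sum_{k>K}$ is controlled by $N\|X_N\|_{\op}^{K}\cdot N$-type bounds, where the extra factor of $N$ comes from the trace. So I would first bound $N|\tr X_N^k|\le N\|X_N\|_{\op}^{k-2}\|X_N\|_F^2=O_\P(N^{1-(k-2)/2})$, which is negligible for $k\ge5$.

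\emph{Main obstacle.} The hardest step is the exact bookkeeping of the $O(1)$ means coming from $k=2,3,4$. This means identifying precisely which pairings survive, and matching the constants $\bar h_\beta s_W$ and $s_W^2$, including the diagonal-versus-off-diagonal subtleties created by hollowness and by $R_N$ not being the identity. I would organise this by writing $R_N=I+\frac{\beta}{N}U^{(N)}$ and expanding each trace in powers of $U^{(N)}/N$. Only the zeroth and first orders should matter.
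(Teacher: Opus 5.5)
Your proposal is correct and follows essentially the paper's route: the same factorization $I-\beta A_N/N=(I-\beta\bW_N/N)(I-\beta R_N\bar A_N/N)$ and trace expansion, and the same merging of the $k=1$ term with the fluctuating part $(1-2\omega_{ij})\bar A_{ij}$ of $\tr(\bar A^2)$ into one linear statistic with coefficients $a_{ij}=1-2\omega_{ij}+2NR_{ij}/\beta$, handled by Lindeberg. The constants also match the paper: the mean of $\tr\{(R-I)\bar A^2\}$ gives $-\beta^2\int\bar h_\beta s_W$, the backtracking part of $\tr(\bar A^4)$ gives $-\frac{\beta^4}{2}\int s_W^2$, and the $k=3$, second-order-in-$(R-I)$ and $k\ge5$ pieces vanish.

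Only small slips need fixing. The combined variance is $\frac{\beta^4}{N^2}\sum_{i<j}a_{ij}^2\omega_{ij}(1-\omega_{ij})\to\frac{\beta^4}{2}\int W(1-W)[1+2(U_\beta-W)]^2=4\tau_\beta^2$, not $\frac{2\beta^4}{2}\int\cdots$, and the main quadratic part carries a minus sign. Also, Lemma~\ref{lem:discrete-resolvent}~(ii) gives $L^2$ rather than uniform convergence of the step kernel $\frac{N}{\beta}(R_{N,ij}-\delta_{ij})$ to $U_\beta$; uniform convergence can fail for merely Riemann-integrable $W$, but $L^2$ convergence is all the variance limit needs.
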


\begin{proof}
 For ease of notation, let us write \(A=A_N\), \(\bW=\bW_N\), \(\bar A=\bar A_N\), and \(R=R_N\).  For all sufficiently large
\(N\), the matrix \(R=(I-\beta \bW/N)^{-1}\) is symmetric positive
definite and \(\|R\|_{\op}\) is uniformly bounded by
Lemma~\ref{lem:discrete-resolvent}~(i).
Fix \(\rho\) such that \(\beta\|T_W\|_{\op}<\rho<1\). Now, define
\[
\mathcal G_N
:=
\left\{
\left\|\frac{\beta A}{N}\right\|_{\op}\le\rho,\quad
\left\|\frac{\beta R\bar A}{N}\right\|_{\op}\le\frac12
\right\}.
\]
By Lemmas~\ref{lem:high-temperature}~(ii) and
\ref{lem:adjacency-norm}~(ii), together with the uniform bound on
\(\|R\|_{\op}\), we have \(\P(\mathcal G_N)\to1\).  Moreover,
\(\mathcal G_N\subseteq\cE_N^\rho\).  Since
\(R=(I-\beta \bW/N)^{-1}\), we have the identity
\[
I-\frac{\beta(\bW+\bar A)}{N}
=
\left(I-\frac{\beta \bW}{N}\right)
\left(I-\frac{\beta R\bar A}{N}\right),
\]
which gives
\[
L_N(A)-L_N(\bW)
=
\log\det\left(I-\frac{\beta R\bar A}{N}\right).
\]

Now, on \(\mathcal G_N\), let \(\lambda_1,\ldots,\lambda_N\) be the
eigenvalues of \(X:=\beta R\bar A/N\).  Since \(R\bar A\) is similar to the
symmetric matrix \(R^{1/2}\bar A R^{1/2}\) and \(\|X\|_{\op}\le1/2\), we have
\(|\lambda_i|\le1/2\).  Hence
\[
\begin{aligned}
\log\det(I-X)
=
\sum_{i=1}^N\log(1-\lambda_i)=
-\sum_{i=1}^N\sum_{k=1}^{\infty}\frac{\lambda_i^k}{k}=
-\sum_{k=1}^{\infty}\frac1k\sum_{i=1}^N\lambda_i^k=
-\sum_{k=1}^{\infty}\frac{\tr(X^k)}{k}.
\end{aligned}
\]
Therefore,
\[
L_N(A)-L_N(\bW)
=
-\sum_{k=1}^{\infty}
\frac{\beta^k}{kN^k}\tr\{(R\bar A)^k\}.
\]

Now, since \(\tr(\bar A)=0\) and \(\bar A\) is hollow, we have
\[
 \tr(R\bar A)
 =\tr\{(R-I)\bar A\}
 =2\sum_{i<j}R_{ij}\bar A_{ij},
\]
where in the last equality we used \(R_{ij}=(R-I)_{ij}\) for \(i\ne j\). Also,
\[
 \tr(R\bar A R\bar A)
 =
 \tr(\bar A^2)
 +2\tr\{(R-I)\bar A^2\}
 +\tr\{(R-I)\bar A(R-I)\bar A\}.
\]
Moreover, \(\tr(\bar A^2)=2\sum_{i<j}\bar A_{ij}^2\), and
\[
 \bar A_{ij}^2-\omega_{ij}(1-\omega_{ij})=(1-2\omega_{ij})\bar A_{ij}.
\]
It follows that on \(\mathcal G_N\),
\begin{align}
 &N\left\{
 L_N(A)-L_N(\bW)
 +\frac{\beta^2}{N^2}\sum_{i<j}\omega_{ij}(1-\omega_{ij})
 \right\}
 \nonumber\\
 &\quad=
 S_N
 -\frac{\beta^2}{N}\tr\{(R-I)\bar A^2\}
 -\frac{\beta^2}{2N}\tr\{(R-I)\bar A(R-I)\bar A\}
 \notag\\
 &\qquad
 -\frac{\beta^3}{3N^2}\tr\{(R\bar A)^3\}
 -\frac{\beta^4}{4N^3}\tr\{(R\bar A)^4\}
 +\mathcal T_N,
 \label{eq:logdet-decomposition}
\end{align}
where
\[
 S_N
 :=
 -\frac{\beta^2}{N}\sum_{i<j}a_{ij}^{(N)}\bar A_{ij},
 \qquad
 a_{ij}^{(N)}
 :=
 1-2\omega_{ij}+\frac{2N R_{ij}}{\beta},
\]
and
\[
 \mathcal T_N
 :=
 -N\sum_{k=5}^{\infty}
 \frac1k\tr\left\{\left(\frac{\beta R\bar A}{N}\right)^k\right\} \mathbbm{1}_{\mathcal{G}_N}.
\]

The summands of \(S_N\) are independent and centered, with
\[
 \operatorname{Var}(S_N)
 =
 \frac{\beta^4}{N^2}
 \sum_{i<j}(a_{ij}^{(N)})^2\omega_{ij}(1-\omega_{ij}).
\]
By Lemma~\ref{lem:high-temperature}~(i), the step-kernel with block
values \(\omega_{ij}\) converges in \(L^2([0,1]^2)\) to \(W\). By
Lemma~\ref{lem:discrete-resolvent}~(i)--(ii), 
\[
 \frac{N}{\beta}\bigl(R_{N,ij}-\delta_{ij}\bigr) \xrightarrow{L^2} U_\beta,
\] and these block values are
uniformly bounded. Since \(i<j\) in the definition of \(a_{ij}^{(N)}\),
we have \(R_{ij}=R_{N,ij}-\delta_{ij}\). Hence the step-kernel with block
values \(a_{ij}^{(N)}\) converges in \(L^2\) to
\(
 1-2W+2U_\beta=1+2(U_\beta-W).
\)
Similarly, the step-kernel with block values \(\omega_{ij}(1-\omega_{ij})\)
converges in \(L^2\) to \(W(1-W)\). Uniform boundedness therefore
implies that \(\omega_{ij}(1-\omega_{ij})(a_{ij}^{(N)})^2\)
converges in \(L^1\), under the step-kernel identification, to
\(
 W(1-W)\left[1+2(U_\beta-W)\right]^2.
\)

By symmetry, the omission of the diagonal blocks contributes
\(O(N^{-1})\) to \(\operatorname{Var}(S_N)\), and hence
\[
 \operatorname{Var}(S_N)
 \rightarrow
 4\sbw^2.
\]
Each summand of \(S_N\) is bounded in absolute value by \(K/N\), for some
\(K>0\), so Lindeberg's condition holds, and
\begin{equation}\label{eq:Sn-clt}
 S_N\xrightarrow{\mathrm d}
 \cN(0,4\sbw^2).
\end{equation}

For the \((R-I)\bar A^2\) term, set
\(s_{i,N}:=\sum_{j\ne i}\omega_{ij}(1-\omega_{ij})\). In the expansion of
\(\tr\{(R-I)\bar A^2\}\), the expectation of
\(\bar A_{jk}\bar A_{ki}\) vanishes unless \(i=j\), in which case it equals
\(\omega_{ik}(1-\omega_{ik})\). Therefore
\[
 \E\tr\{(R-I)\bar A^2\}
 =
 \sum_{i=1}^N (R_{ii}-1)s_{i,N}.
\]

Since \(W(1-W)\) is Riemann integrable,
Lemma~\ref{lem:riemann-approximation}~(ii), together with the fact that the contribution of the omitted diagonal terms in \(s_{i,N}\) is at most \(O(1)\), gives
\begin{equation}\label{eq:row-variance-profile}
\frac1N\sum_{i=1}^N
\left|\frac{s_{i,N}}{N}-s_W\left(\frac{i}{N}\right)\right|^2
\rightarrow0.
\end{equation}
Moreover, Lemma~\ref{lem:discrete-resolvent}~(i) gives
\[
 \sup_{N\ge N_0}\max_{1\le i\le N}N|R_{ii}-1|<\infty.
\]
On the other hand, the definition of \(\bar h_\beta\) and the bound
\(
 |W_m(x,x)|\le\|T_W\|_{\op}^{m-2}
\)
give
\[
 \sup_{x\in[0,1]}|\bar h_\beta(x)|
 \le
 \sum_{m=2}^{\infty}\beta^m\|T_W\|_{\op}^{m-2}
 =
 \frac{\beta^2}{1-\beta\|T_W\|_{\op}}.
\]
Finally, since \(0\le \omega_{ij}(1-\omega_{ij})\le1/4\),
\[
 0\le\frac{s_{i,N}}N\le\frac14,
 \qquad
 0\le s_W(x)\le\frac14.
\]
Hence, Lemma~\ref{lem:discrete-resolvent}~(v),
\eqref{eq:row-variance-profile}, the Cauchy--Schwarz inequality, and the
uniform bounds above give
\begin{align*}
&\left|
 \frac1N\sum_{i=1}^N N(R_{ii}-1)\frac{s_{i,N}}N
 -
 \frac1N\sum_{i=1}^N
 \bar h_\beta(i/N)s_W(i/N)
 \right|\\
&\quad\le
 \left\{
 \frac1N\sum_{i=1}^N
 \left|N(R_{ii}-1)-\bar h_\beta(i/N)\right|^2
 \right\}^{1/2}
 \left\{
 \frac1N\sum_{i=1}^N
 \left(\frac{s_{i,N}}N\right)^2
 \right\}^{1/2}\\
&\qquad+
 \left\{
 \frac1N\sum_{i=1}^N
 \bar h_\beta(i/N)^2
 \right\}^{1/2}
 \left\{
 \frac1N\sum_{i=1}^N
 \left|
 \frac{s_{i,N}}N-s_W(i/N)
 \right|^2
 \right\}^{1/2}
 \rightarrow0.
\end{align*}
Since \(\bar h_\beta s_W\) is Riemann integrable, it also follows that
\[
 \frac1N\sum_{i=1}^N
 \bar h_\beta(i/N)s_W(i/N)
 \rightarrow
 \int_0^1\bar h_\beta(x)s_W(x)\d x.
\]
Consequently,
\[
 \frac1N\E\tr\{(R-I)\bar A^2\}
 =
 \frac1N\sum_{i=1}^N N(R_{ii}-1)\frac{s_{i,N}}N
 \rightarrow
 \int_0^1\bar h_\beta(x)s_W(x)\d x.
\]
Also, by Lemma~\ref{lem:trace-variance}~(i),
\[
 \E\left[\left(
 \frac{\tr\{(R-I)\bar A^2\}-\E\tr\{(R-I)\bar A^2\}}{N}
 \right)^2\right]
 =
 O(N^{-1}).
\]
Hence,
\begin{equation}\label{eq:resolvent-quadratic-limit}
 -\frac{\beta^2}{N}\tr\{(R-I)\bar A^2\}
 \xrightarrow{\P}
 -\beta^2\int_0^1\bar h_\beta(x)s_W(x)\d x.
\end{equation}

Next, let \(\bm{e}_i\) denote the \(i^\mathrm{th}\) standard basis vector (vector of length $N$ with its $i^\mathrm{th}$ entry $1$ and all other entries $0$). Denote by $E_{ij}$ the $N\times N$ matrix with its $(i,j)^{\mathrm{th}}$ element $1$ and all other entries $0$, i.e.

\begin{equation}
 E_{ij}:=\bm{e}_i\bm{e}_j^{\top}.
    \label{def:eij}
\end{equation}

For the \((R-I)\bar A(R-I)\bar A\) term, put
\(X_{ij}:=E_{ij}+E_{ji}\) for \(i<j\). Since
\(\bar A=\sum_{i<j}\bar A_{ij}X_{ij}\), independence of
\(\{\bar A_{ij}\}_{i<j}\) gives
\[
\begin{aligned}
 \E\tr\{(R-I)\bar A(R-I)\bar A\}
 &=
 \sum_{i<j}\omega_{ij}(1-\omega_{ij})
 \tr\{(R-I)X_{ij}(R-I)X_{ij}\}\\
 &=
 2\sum_{i<j}\omega_{ij}(1-\omega_{ij})
 \left\{(R_{ii}-1)(R_{jj}-1)+R_{ij}^2\right\}
 =O(1),
\end{aligned}
\]
where the last bound follows from
\(\max_{i,j}|R_{ij}-\delta_{ij}|=O(N^{-1})\), by
Lemma~\ref{lem:discrete-resolvent}~(i). Together with
Lemma~\ref{lem:trace-variance}~(ii), this yields
\begin{equation}\label{eq:second-resolvent-negligible}
 \frac1N\tr\{(R-I)\bar A(R-I)\bar A\}
 \xrightarrow{\P}0.
\end{equation}

For the cubic term, the representation \(\bar A=\sum_{i<j}\bar{A}_{ij}X_{ij}\) gives
\[
\begin{aligned}
\E\tr\{(R\bar A)^3\}
&=
\sum_{i<j}\sum_{k<l}\sum_{r<s}
\E(\bar{A}_{ij}\bar{A}_{kl}\bar{A}_{rs})
\tr(RX_{ij}RX_{kl}RX_{rs})\\
&=
\sum_{i<j}\E(\bar{A}_{ij}^3)\tr\{(RX_{ij})^3\}.
\end{aligned}
\]
Indeed, unless the three unoriented edges coincide, at least one edge
variable appears exactly once, and independence and centering of $\bar{A}_{ij}$ make the
corresponding expectation zero.  

Next, since \(X_{ij}=\bm{e}_i\bm{e}_j^{\top}+\bm{e}_j\bm{e}_i^{\top}\),
\(RX_{ij}=UV^{\top}\), where \(U=(R\bm{e}_i,R\bm{e}_j)\) and
\(V=(\bm{e}_j,\bm{e}_i)\). Hence,
\[
\tr\{(RX_{ij})^3\}
=
\tr\{(V^{\top}U)^3\},
\qquad
V^{\top}U
=
\begin{pmatrix}
R_{ij} & R_{jj}\\
R_{ii} & R_{ij}
\end{pmatrix},
\]
and a direct \(2\times2\) computation gives
\[
\tr\{(RX_{ij})^3\}
=
2R_{ij}(R_{ij}^2+3R_{ii}R_{jj}).
\]
Lemma~\ref{lem:discrete-resolvent}~(i) gives
\(R_{ij}=O(N^{-1})\) for \(i\ne j\) and
\(R_{ii}=1+O(N^{-1})=O(1)\), uniformly in \(i,j\). Since
\(|\E(\bar{A}_{ij}^3)|\le1\), it follows that
\[
\left|\E\tr\{(R\bar A)^3\}\right|
=
O\left(\sum_{i<j}\frac1N\right)
=
O(N).
\]
Together with Lemma~\ref{lem:trace-variance}(iii), this yields
\begin{equation}\label{eq:cubic-negligible}
\frac1{N^2}\tr\{(R\bar A)^3\}
\xrightarrow{\P}0.
\end{equation}

For the fourth-order term,
\[
\tr(\bar A^4)
=
\sum_{i,j,k,\ell}
\bar{A}_{ij}\bar{A}_{jk}\bar{A}_{k\ell}\bar{A}_{\ell i}.
\]
Each summand in this expansion corresponds to the closed walk
\(i\to j\to k\to \ell \to i\).  Since the edge variables are independent
and centered, the corresponding expectation vanishes unless every unoriented edge
appearing in the walk is traversed at least twice.  As the walk has
length four and \(\bar A\) is hollow, there are only two possibilities.
First, a single edge \(\{i,j\}\) is traversed four times.  For each
\(i<j\), the two possible walks are
\(i\to j\to i\to j\to i\) and
\(j\to i\to j\to i\to j\), giving the contribution
\(2\E \bar{A}_{ij}^4\).  Second, two distinct edges are each traversed twice.  They must share a
vertex, say the edges are \(\{i,j\}\) and \(\{i,k\}\), where
\(j<k\) and \(j,k\ne i\).  The four corresponding rooted walks are
\[
i\to j\to i\to k\to i,\qquad
i\to k\to i\to j\to i,\qquad
j\to i\to k\to i\to j,\qquad
k\to i\to j\to i\to k.
\]
By independence, each contributes
\(\E(\bar{A}_{ij}^2\bar{A}_{ik}^2)=\omega_{ij}(1-\omega_{ij})\omega_{ik}(1-\omega_{ik})\).  Therefore
\[
\E\tr(\bar A^4)
=
2\sum_{i<j}\E \bar{A}_{ij}^4
+
4\sum_{i=1}^N
\sum_{\substack{j<k\\ j,k\ne i}}
\omega_{ij}(1-\omega_{ij})\omega_{ik}(1-\omega_{ik}).
\]
Since
\[
 2\sum_{i=1}^N s_{i,N}^2
 =
 4\sum_{i<j}\omega_{ij}^2(1-\omega_{ij})^2
 +
 4\sum_{i=1}^N
 \sum_{\substack{j<k\\ j,k\ne i}}\omega_{ij}\omega_{ik}(1-\omega_{ij})(1-\omega_{ik}),
\]
we obtain the exact identity
\[
 \E\tr(\bar A^4)
 =
 2\sum_{i=1}^N s_{i,N}^2
 +
 2\sum_{i<j}
 \left\{\E \bar{A}_{ij}^4-2\omega_{ij}^2(1-\omega_{ij})^2\right\}.
\]
The second sum is \(O(N^2)\), while
\eqref{eq:row-variance-profile}, boundedness, and
Riemann-sum convergence for the Riemann-integrable function \(s_W^2\) give
\[
 \frac{2}{N^3}\sum_{i=1}^N s_{i,N}^2
 =
 \frac2N\sum_{i=1}^N
 \left(\frac{s_{i,N}}{N}\right)^2
 \rightarrow
 2\int_0^1s_W(x)^2\d x.
\]
Consequently,
\[
 \frac1{N^3}\E\tr(\bar A^4)
 \rightarrow
 2\int_0^1s_W(x)^2\d x.
\]
The variance bound in Lemma~\ref{lem:trace-variance}~(iv), applied with
\(R=I\), gives
\(\operatorname{Var}\{\tr(\bar A^4)\}=O(N^4(\log N)^2)\).  Hence,
\[
 \frac1{N^3}\tr(\bar A^4)
 \xrightarrow{\P}
 2\int_0^1s_W(x)^2\d x.
\]

To replace \(\bar A\) by \(R\bar A\), set
\(Y:=\bar A/\sqrt N\). Then
\[
\frac1{N^3}
\left[
\tr\{(R\bar A)^4\}-\tr(\bar A^4)
\right]
=
\frac1N
\left[
\tr\{(RY)^4\}-\tr(Y^4)
\right].
\]
With the convention \((R-I)^0:=I\), the identity \(R=I+(R-I)\) gives
\[
\tr\{(RY)^4\}-\tr(Y^4)
=
\sum_{\substack{\bm{\varepsilon}\in\{0,1\}^4\\
\bm{\varepsilon}\ne\bm{0}}}
\tr\left(
(R-I)^{\varepsilon_1}Y
(R-I)^{\varepsilon_2}Y
(R-I)^{\varepsilon_3}Y
(R-I)^{\varepsilon_4}Y
\right).
\]
For a fixed \(\bm{\varepsilon}\in\{0,1\}^4\), let
\(m:=\varepsilon_1+\cdots+\varepsilon_4\ge1\). We may cyclically permute
the factors so that one copy of \(R-I\) occurs first. Let \(Q\) denote
the product between that copy of \(R-I\) and the final copy of \(Y\).
Thus \(Q\) contains \(m-1\) copies of \(R-I\) and three copies of \(Y\), and
\[
\operatorname{Tr}\left(
\prod_{i=1}^4 (R-I)^{\varepsilon_i}Y
\right)
=
\operatorname{Tr}\{(R-I)QY\}.
\]
By the Cauchy--Schwarz inequality, operator--Frobenius
submultiplicativity, and submultiplicativity of \(\|\cdot\|_{\op}\),
\[
\begin{aligned}
\left|\operatorname{Tr}\{(R-I)QY\}\right|
&\le \|R-I\|_F\,\|QY\|_F\\
&\le \|R-I\|_F\,\|Y\|_F\,\|Q\|_{\op}\\
&\le \|R-I\|_F\,\|Y\|_F\,
\|R-I\|_{\op}^{m-1}\|Y\|_{\op}^3.
\end{aligned}
\]
By Lemma~\ref{lem:discrete-resolvent}~(i),
\[
\|R-I\|_F^2
=
\sum_{i,j=1}^N|R_{ij}-\delta_{ij}|^2
=O(1),
\qquad
\|R-I\|_{\op}\le \|R\|_{\op}+1=O(1),
\]
while \(\|Y\|_F^2=N^{-1}\|\bar A\|_F^2\le N\). Since the expansion
contains only \(2^4-1=15\) terms,
\[
\frac1N
\left|
\tr\{(RY)^4\}-\tr(Y^4)
\right|
\le
\frac{K}{\sqrt N}\|Y\|_{\op}^3
\]
for some constant \(K>0\). By Lemma~\ref{lem:adjacency-norm}~(ii),
\(\|Y\|_{\op}=O_{\P}(\sqrt{\log N})\), and hence the last expression is
\(o_{\P}(1)\).
Consequently,
\[
\frac1{N^3}\tr\{(R\bar A)^4\}
=
\frac1{N^3}\tr(\bar A^4)+o_{\P}(1),
\]
and therefore
\begin{equation}\label{eq:fourth-limit}
-\frac{\beta^4}{4N^3}\tr\{(R\bar A)^4\}
\xrightarrow{\P}
-\frac{\beta^4}{2}\int_0^1s_W(x)^2\d x.
\end{equation}

For the remainder, the uniform bound on \(\|R\|_{\op}\) and
Lemma~\ref{lem:adjacency-norm}~(ii) give
\[
\Lambda_N := \left\|\frac{\beta R\bar A}{N}\right\|_{\op}
 =
 O_{\P}\left(\sqrt{\frac{\log N}{N}}\right).
\]
On $\mathcal{G}_N$, the bound
\(|\tr(\Xi^k)|\le N\|\Xi\|_{\op}^k\), valid for every
\(N\times N\) matrix \(\Xi\) and every integer \(k\ge1\), gives:
 \begin{equation}\label{eq:logdet-remainder}
 |\mathcal T_N|
 \le
 N^2\sum_{k=5}^{\infty}
\Lambda_N^k =  \frac{N^2\Lambda_N^5}{1-\Lambda_N}\le 
2N^2\Lambda_N^5
 =
 O_{\P}\left(\frac{(\log N)^{5/2}}{\sqrt N}\right)
 =
 o_\P(1).
 \end{equation}

Combining \eqref{eq:Sn-clt}, \eqref{eq:resolvent-quadratic-limit},
\eqref{eq:second-resolvent-negligible}, \eqref{eq:cubic-negligible},
\eqref{eq:fourth-limit} and \eqref{eq:logdet-remainder} with
\eqref{eq:logdet-decomposition} and \(\P(\mathcal G_N)\to1\)
proves Theorem~\ref{thm:logdet}.
\end{proof}

\vspace{0.1in}

\noindent\textbf{Acknowledgement} S. M. was supported by the  AcRF Tier 1 grant A-8002932-00-00.

\bibliographystyle{plain}
\bibliography{logpartbib1}
\appendix

\section{Some approximation results}
In this section, we provide several estimates that are relevant throughout the paper. We start with a basic result on Riemann approximation of scalar fields. 

\begin{lemma}\label{lem:riemann-approximation}
The following assertions hold.
\begin{enumerate}
\item[(i)] Let \(d\ge1\) and \(F:[0,1]^d\to\mathbb R\) be a Riemann integrable function.
Define
\[
 \widetilde F_N(x_1,\ldots,x_d)
 :=
 F\left(
 \frac{\lceil Nx_1\rceil}{N},
 \ldots,
 \frac{\lceil Nx_d\rceil}{N}
 \right).
\]
Then
\[
 \|\widetilde F_N-F\|_{L^2([0,1]^d)}\rightarrow0.
\]

\item[(ii)] Let \(r,s\ge1\) and
\(F:[0,1]^{r+s}\to\mathbb R\) be a Borel-measurable, Riemann integrable function. Define
\[
 G(\bm{x})
 :=\int_{[0,1]^s}F(\bm{x},\bm{y})\d\bm{y},
 \qquad \bm{x}\in[0,1]^r,
\]
where the integral is understood in the Lebesgue sense.
Then \(G\) is Riemann integrable and
\[
 \frac1{N^r}\sum_{\bm{i}\in[N]^r}
 \left|
 \frac1{N^s}\sum_{\bm{j}\in[N]^s}
 F\left(\frac{\bm{i}}N,\frac{\bm{j}}N\right)
 -G\left(\frac{\bm{i}}N\right)
 \right|^2
 \rightarrow 0.
\]

\item[(iii)] Let
\(
 [d]^2_{-}
 :=
 \{(a,b)\in[d]^2:a\ne b\}.
\)
If \(K:[0,1]^2\to\mathbb R\) is Riemann integrable, then, for every
\(k\ge1\) and every \(k\)-tuple of ordered pairs
\(
 \bigl((a_1,b_1),\ldots,(a_k,b_k)\bigr)
 \in\bigl([d]^2_{-}\bigr)^k,
\)
the map \(\iota:[0,1]^d\to\mathbb R\) defined by
\[
 \iota(x_1,\ldots,x_d)
 :=
 \prod_{\ell=1}^k K(x_{a_\ell},x_{b_\ell})
\]
is Riemann integrable on \([0,1]^d\).
\end{enumerate}
\end{lemma}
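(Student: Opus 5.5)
The three parts can all be handled with the Lebesgue characterization of Riemann integrability: a bounded function on a box is Riemann integrable if and only if its set of discontinuities has Lebesgue measure zero. The other tool is the elementary fact that Riemann integrability is equivalent to upper and lower Darboux sums along the uniform grid of mesh $1/N$ converging to the same limit.

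\emph{Part (i).} Write $\widetilde F_N - F$ on each grid cell $Q$ of side $1/N$. On that cell, $|\widetilde F_N-F|\le \operatorname{osc}_Q F$. Hence
\[
\|\widetilde F_N-F\|_{L^2}^2\le 2\|F\|_\infty\sum_Q |Q|\operatorname{osc}_QF.
\]
The right-hand side is $2\|F\|_\infty$ times the difference between the upper and lower Darboux sums on the uniform partition, which tends to zero by Riemann integrability. The cells here are half-open with the right endpoint as tag, so the tag lies in the closed cell and the oscillation bound holds.

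\emph{Part (iii).} $\iota$ is bounded. If $K$ is continuous at $(x_{a_\ell},x_{b_\ell})$ for every $\ell$, then $\iota$ is continuous at $\bm x$. So the discontinuity set of $\iota$ is contained in
\[
\bigcup_\ell \pi_\ell^{-1}(D_K),
\]
where $D_K$ is the null discontinuity set of $K$ and $\pi_\ell(\bm x)=(x_{a_\ell},x_{b_\ell})$. Because $a_\ell\ne b_\ell$, each $\pi_\ell$ is a coordinate projection onto two distinct coordinates. Therefore $\pi_\ell^{-1}(D_K)$ is, after permuting coordinates, $D_K\times[0,1]^{d-2}$, which is null by Fubini. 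The set $D_K$ is Borel-measurable up to a null set; it is in fact an $F_\sigma$. Hence $\iota$ is Riemann integrable.

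\emph{Part (ii).} This is the main step. I first show $G$ is Riemann integrable. $G$ is bounded. At a point $\bm x$ where the section $\bm y\mapsto F$ is continuous at almost every $\bm y$, dominated convergence gives continuity of $G$ at $\bm x$. By Fubini applied to the null set $D_F$, almost every $\bm x$ has a null section, so $G$ is continuous almost everywhere.

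For the displayed convergence, define the lower and upper cell envelopes $\underline F_N,\overline F_N$ as the infimum and supremum of $F$ over the product grid cells. Then
\[
\frac1{N^s}\sum_{\bm j}F(\bm i/N,\bm j/N)-G(\bm i/N)
\]
is bounded in absolute value by
\[
\int_{[0,1]^s}\bigl(\overline F_N-\underline F_N\bigr)(\bm x,\bm y)\,\mathrm d\bm y
\]
for any $\bm x$ in the cell indexed by $\bm i$. This holds because both the Riemann sum in $\bm y$ and $G(\bm i/N)$ lie between the $\bm y$-integrals of the envelopes at $\bm x=\bm i/N$, and the cell envelopes do not depend on the position within the cell.

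The summand is bounded by $2\|F\|_\infty$ times that quantity, so the average over $\bm i$ of its square is at most
\[
2\|F\|_\infty\int_{[0,1]^{r+s}}\bigl(\overline F_N-\underline F_N\bigr),
\]
which is the Darboux gap for $F$ on the uniform grid and tends to zero.

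The delicate point is the endpoint convention: the grid points $\bm i/N$ lie on cell boundaries. This is handled by using closed cells in the envelopes, since Darboux sums with closed cells still converge for Riemann integrable functions.
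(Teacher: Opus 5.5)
Your proof is correct. For part (i), part (iii), and the displayed estimate in part (ii), it is essentially the paper's argument:
\begin{itemize}
\item In (i) and (ii), the pointwise error on each closed grid cell is bounded by the oscillation of $F$ there. Averaging then gives at most the Darboux gap $\Delta_{N,r+s}(F)$ on the uniform grid, and the square is handled via $|\eta|^2\le 2\|F\|_\infty|\eta|$.
\item In (iii), the discontinuity set of $\iota$ lies inside the finite union of null cylinders $\pi_\ell^{-1}(D_K)$.
\end{itemize}

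The one genuine difference is how you prove that $G$ is Riemann integrable. You go through the Lebesgue criterion: Tonelli applied to the $F_\sigma$ set $D_F$ gives null $\bm x$-sections for a.e.\ $\bm x$, and dominated convergence then gives continuity of $G$ at such $\bm x$.

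The paper instead observes directly that for $\bm x,\bm x'$ in the same closed cell $Q^{(r)}_{\bm i,N}$,
\[
|G(\bm x)-G(\bm x')|\le N^{-s}\sum_{\bm j}\operatorname{osc}_{Q_{(\bm i,\bm j),N}}F .
\]
Hence the Darboux gap of $G$ is at most $\Delta_{N,r+s}(F)\to0$. This is exactly your envelope estimate, because your envelopes bound $G(\bm x)$ for every $\bm x$ in the closed cell, not only at $\bm i/N$. So the paper gets Riemann integrability of $G$ for free from the computation you already carry out. Its route is elementary and quantitative, with no appeal to Fubini or dominated convergence.

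Your route is qualitative but valid. One wording point: what it needs is joint continuity of $F$ at $(\bm x,\bm y)$ for a.e.\ $\bm y$, i.e.\ a null $\bm x$-section of $D_F$. Continuity of $\bm y\mapsto F(\bm x,\bm y)$ alone would not give continuity of $G$ in $\bm x$. Your next sentence shows that joint continuity is what you meant.
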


\begin{proof}
For every
\(\bm{i}=(i_1,\ldots,i_d)\in[N]^d\), define 
\[
 Q_{\bm{i},N} =  Q_{\bm{i},N}^{(d)}
 :=
 \prod_{r=1}^d
 \left[\frac{i_r-1}{N},\frac{i_r}{N}\right].
\]
Let us also define the oscillation of \(F\) on $Q_{\bm i, N}$ by
\[
\omega_{Q_{\bm{i},N}}(F)
 :=
 \sup_{\bm{x},\bm{y}\in Q_{\bm{i},N}}
 |F(\bm{x})-F(\bm{y})|
 =
 \sup_{\bm{x}\in Q_{\bm{i},N}}F(\bm{x})
 -
 \inf_{\bm{x}\in Q_{\bm{i},N}}F(\bm{x}).
\]
Since $F$ is Riemann integrable, we have:
\begin{equation}\label{DNosc88}
 \Delta_N(F) = \Delta_{N,d}(F)
 :=\frac1{N^d}\sum_{\bm{i}\in[N]^d}
\omega_{Q_{\bm{i},N}}(F)
 \rightarrow0,   
\end{equation}
since the right-hand side above is the difference between the upper and lower Darboux sums of $F$ over the regular $N$-grid on $[0,1]^d$. 

Now, note that on each $Q_{\bm i, N}$,  the difference \(|F-\widetilde F_N|\) is bounded by
\(\omega_{Q_{\bm{i},N}}(F)\).  By \eqref{DNosc88},
\[
 \|\widetilde F_N-F\|_{L^1} = \sum_{\bm i \in [N]^d} \int_{Q_{\bm i, N}} |F-\widetilde F_N|  \le \Delta_N(F).
\]
Since both functions are bounded by \(\|F\|_\infty\) in absolute
value,
\[
 \|\widetilde F_N-F\|_{L^2}^2
 \le2\|F\|_\infty\|\widetilde F_N-F\|_{L^1}\rightarrow0,
\]
which proves part~\textnormal{(i)}.

Next, for \(\bm{i}\in[N]^r\) and \(\bm{j}\in[N]^s\), define
\(
 Q_{(\bm{i},\bm{j}),N}
 :=
 Q_{\bm{i},N}^{(r)}\times Q_{\bm{j},N}^{(s)}.
\)
Then,
 \begin{eqnarray*}
\left| \frac1{N^s}\sum_{\bm{j}\in[N]^s}
 F\left(\frac{\bm{i}}N,\frac{\bm{j}}N\right)
 -G\left(\frac{\bm{i}}N\right)\right| &=&
 \left|\sum_{\bm{j}\in[N]^s}
 \int_{Q_{\bm{j},N}^{(s)}}
 \left[
 F\left(\frac{\bm{i}}N,\frac{\bm{j}}N\right)
 -
 F\left(\frac{\bm{i}}N,\bm{y}\right)
 \right]\d\bm{y}\right|\\&\le&  \sum_{\bm{j}\in[N]^s}
 \int_{Q_{\bm{j},N}^{(s)}}  \omega_{Q_{(\bm{i},\bm{j}),N}}(F)\d\bm{y}\\&=& \frac{1}{N^s}\sum_{\bm{j}\in[N]^s}
 \omega_{Q_{(\bm{i},\bm{j}),N}}(F)
 \end{eqnarray*}
Denote
\[
 \eta_{\bm{i},N}
 :=
 \frac1{N^s}\sum_{\bm{j}\in[N]^s}
 F\left(\frac{\bm{i}}N,\frac{\bm{j}}N\right)
 -G\left(\frac{\bm{i}}N\right).
\]
Averaging the preceding bound over \(\bm{i}\) gives
\[
 \frac1{N^r}\sum_{\bm{i}\in[N]^r}|\eta_{\bm{i},N}|
 \le
 \frac1{N^{r+s}}\sum_{(\bm{i},\bm{j})\in [N]^{r+s}}
 \omega_{Q_{(\bm{i},\bm{j}),N}}(F)
 =\Delta_{N,r+s}(F) \rightarrow 0.
\]
Since \(|\eta_{\bm{i},N}|\le2\|F\|_\infty\), it thus follows that
\[
 \frac1{N^r}\sum_{\bm{i}\in[N]^r}|\eta_{\bm{i},N}|^2
 \le2\|F\|_\infty\Delta_{N,r+s}(F)\rightarrow0.
\]

Moreover, for \(\bm{x},\bm{x}'\in Q_{\bm{i},N}^{(r)}\), 
\[
 |G(\bm{x})-G(\bm{x}')|
 \le
 \frac1{N^s}\sum_{\bm{j}\in[N]^s}
 \omega_{Q_{(\bm{i},\bm{j}),N}}(F).
\]
Hence, we have:
\[
 \frac1{N^r}\sum_{\bm{i}\in[N]^r}
 \omega_{Q_{\bm{i},N}^{(r)}}(G)
 =  \frac1{N^r}\sum_{\bm{i}\in[N]^r} \sup_{\bm{x},\bm{x}'\in Q_{\bm{i},N}^{(r)}}
\left|G(\bm{x})-G(\bm{x}')\right| \le \Delta_{N,r+s}(F)\rightarrow0.
\]
Since \(G\) is bounded, the Riemann criterion proves that \(G\) is
Riemann integrable. This proves part (ii).

For part (iii), since $K$ is Riemann integrable, the set \(D_K\) of discontinuity points of \(K\) has
two-dimensional Lebesgue measure zero. For each $(a,b)\in [d]_-^2$, define the projection map $\pi_{a,b}: [0,1]^d \to [0,1]^2$ as $\pi_{a,b}(\bm x) = (x_a,x_b)$. Note that if $K$ is continuous at $(x_{a_\ell}, x_{b_\ell})$ for all $1\le \ell\le k$, then $\iota$ is continuous at $\bm x$. Hence, the set of discontinuity points $D_\iota$ of $\iota$ satisfies:
$$D_\iota \subseteq \bigcup_{\ell=1}^k \pi_{a_\ell,b_\ell}^{-1}(D_K).$$
Since the $d$-dimensional Lebesgue measure of each $\pi_{a_\ell,b_\ell}^{-1}(D_K)$ is zero, $D_\iota$ has Lebesgue measure zero. Part~\textnormal{(iii)} now follows from the boundedness of $\iota$ and the 
Lebesgue criterion for Riemann integrability.
\end{proof}

\begin{lemma}
\label{lem:adjacency-norm}
  The following assertions hold.
  \begin{enumerate}
\item[(i)] There exists an absolute constant \(c>0\) such that
\[
 \P(\|\bar A_N\|_{\op}\ge t)
 \le 2N\exp\left\{-\frac{ct^2}{N+t}\right\},
 \qquad t\ge0.
\]
\item[(ii)] For every fixed \(q\ge1\),
\[
 \E\|\bar A_N\|_{\op}^q=O\{(N\log N)^{q/2}\}.
\]
In particular, \(\|\bar A\|_{\op}=O_{\P}(\sqrt{N\log N})\).
\end{enumerate}
\end{lemma}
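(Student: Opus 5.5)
Part (i) is a direct application of the matrix Bernstein inequality, and part (ii) follows from (i) by integrating tails. For (i), write the centered matrix as a sum of independent, mean-zero, symmetric random matrices
\[
\bar A_N=\sum_{i<j}\bar A_{ij}X_{ij},\qquad X_{ij}:=E_{ij}+E_{ji},
\]
where \(E_{ij}\) is as in \eqref{def:eij}. Each summand satisfies \(\|\bar A_{ij}X_{ij}\|_{\op}=|\bar A_{ij}|\le1\), since \(\|X_{ij}\|_{\op}=1\). For the matrix variance we have \(X_{ij}^2=E_{ii}+E_{jj}\), so
\[
\sum_{i<j}\E\bigl(\bar A_{ij}^2X_{ij}^2\bigr)=\operatorname{diag}\Bigl(\sum_{j\ne i}\omega_{ij}(1-\omega_{ij})\Bigr)_{i=1}^N,
\]
whose operator norm is at most \(\max_i s_{i,N}\le N/4\le N\). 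Matrix Bernstein for \(N\times N\) symmetric matrices then gives
\[
\P(\|\bar A_N\|_{\op}\ge t)\le 2N\exp\left\{-\frac{t^2/2}{N+t/3}\right\}.
\]
Since \(N+t/3\le N+t\), this is at most \(2N\exp\{-t^2/(2(N+t))\}\), which yields (i) with \(c=1/2\).

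For (ii), use the layer-cake formula
\[
\E\|\bar A_N\|_{\op}^q=\int_0^\infty qt^{q-1}\P(\|\bar A_N\|_{\op}\ge t)\dd t.
\]
Note that \(\|\bar A_N\|_{\op}\le\|\bar A_N\|_F\le N\) deterministically, so the integral may be truncated at \(t=N\). On \([0,N]\) we have \(N+t\le 2N\), so the tail bound becomes \(2N\exp\{-ct^2/(2N)\}\). Set the threshold \(t_0:=C\sqrt{N\log N}\) and split the integral at \(t_0\).
\begin{itemize}
\item On \([0,t_0]\), bound the probability by \(1\); the contribution is \(t_0^q=O\{(N\log N)^{q/2}\}\).
\item On \([t_0,N]\), substitute \(t=\sqrt N u\). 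The contribution is bounded by
\[
2N\cdot N^{q/2}\int_{C\sqrt{\log N}}^\infty qu^{q-1}e^{-cu^2/2}\dd u.
\]
This Gaussian tail integral is \(O\bigl((\log N)^{(q-2)/2}N^{-cC^2/2}\bigr)\). Choosing \(C\) large enough that \(cC^2/2\ge1\) kills the factor \(N\), so this piece is \(O(N^{q/2})\).
\end{itemize}
Combining the two pieces gives \(\E\|\bar A_N\|_{\op}^q=O\{(N\log N)^{q/2}\}\). The statement \(\|\bar A_N\|_{\op}=O_{\P}(\sqrt{N\log N})\) then follows from Markov's inequality with \(q=1\).

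There is no real obstacle here. The main point of care is using the variance proxy \(N\) rather than \(N^2\); this comes from the diagonal structure of \(X_{ij}^2\), and it is what delivers the \(\sqrt{N\log N}\) scale. The remaining work is bookkeeping of the tail integral, including the choice of \(C\) depending on \(c\).
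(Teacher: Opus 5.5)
Your proposal is correct and follows the paper's proof essentially step for step: the same decomposition $\bar A_N=\sum_{i<j}\bar A_{ij}(E_{ij}+E_{ji})$ with the diagonal variance proxy of norm at most $N$ fed into matrix Bernstein, then the tail-integral formula truncated at $N$, split at $C\sqrt{N\log N}$, with the substitution $u=t/\sqrt N$. One harmless slip: when $cC^2/2=1$ exactly and $q>2$, the second piece is $O(N^{q/2}(\log N)^{(q-2)/2})$ rather than $O(N^{q/2})$, which is still within the claimed $O\{(N\log N)^{q/2}\}$ (or simply take $C$ slightly larger).
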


\begin{proof}
Let $E_{ij}$ be as defined in \eqref{def:eij}. Write
\[
 \bar A=\sum_{i<j}\bar{A}_{ij}(E_{ij}+E_{ji}).
\]
The summands are independent, centered, self-adjoint, and have operator
norm at most one.  Moreover,
\[
 \sum_{i<j}
 \E\left[
 \bar{A}_{ij}^2(E_{ij}+E_{ji})^2
 \right]
 =
 \mathrm{diag}\left(\sum_{j\ne i}\omega_{ij}(1-\omega_{ij})\right)_{i=1}^N.
\]
Its operator norm is at most $N$.  Matrix Bernstein's inequality
\cite{Tropp2012} therefore gives
\[
 \P(\|\bar A\|_{\op}\ge t)
 \le 2N\exp\left\{-\frac{ct^2}{N+t}\right\}
 \qquad (t\ge0),
\]
for an absolute constant $c>0$, proving (i).

Next, let \(X_N:=\|\bar A\|_{\op}\).  Since \(|\bar{A}_{ij}|\le1\), we have
\(X_N\le N\) deterministically.  Moreover, for \(0\le t\le N\),
part~\textnormal{(i)} implies
\[
\P(X_N>t)
\le
2N\exp\left(-\frac{c_0t^2}{N}\right)
\]
for some \(c_0>0\).  Fix \(q\ge1\), and set
\(t_N=A\sqrt{N\log N}\), where \(A>0\) is sufficiently large.  Using
the tail-integral formula for moments followed by the change of variables
\(u=t/\sqrt N\), we obtain
\[
\begin{aligned}
\E X_N^q
&=
q\int_0^N t^{q-1}\P(X_N>t)\,\mathrm{d}t\\
&\le
t_N^q
+
2qN^{q/2+1}
\int_{A\sqrt{\log N}}^\infty
u^{q-1}e^{-c_0u^2}\,\mathrm{d}u\\
&\le A^q (N\log N)^{q/2} +  2qN^{q/2+1} \int_{A\sqrt{\log N}}^\infty
e^{-c_0u^2/2}\,\mathrm{d}u  \\
&=
O\bigl((N\log N)^{q/2}\bigr),
\end{aligned}
\]
where the last step follows from the standard Gaussian tail bound, upon
choosing \(A\) sufficiently large.  This proves part~\textnormal{(ii)}.
\end{proof}

\begin{lemma}
\label{lem:high-temperature}
Fix \(\rho\) such that \(\beta\|T_W\|_{\op}<\rho<1\). The following assertions hold.
\begin{enumerate}

\item[(i)] 
We have
\[
 \|W_N^{\mathrm{step}}-W\|_{L^2([0,1]^2)}
 \rightarrow0
\qquad \text{and}\qquad
 \left\|\frac{\bW_N}{N}\right\|_{\op}
 =\|T_N\|_{\op}
 \rightarrow\|T_W\|_{\op}.
\]
Consequently, there exists $N_0$ such that, for every $N\ge N_0$,
\[
 \beta\left\|\frac{\bW_N}{N}\right\|_{\op}\le\rho.
\]

\item[(ii)] 
There exist constants $c,C>0$ such that, for every $N\ge N_0$,
\[
 \P\left(
 \beta\left\|\frac{A_N}{N}\right\|_{\op}>\rho
 \right)
 \le CNe^{-cN}.
\]

\end{enumerate}
\end{lemma}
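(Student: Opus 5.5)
We prove Lemma~\ref{lem:high-temperature} in two parts, both resting on the step-kernel representation of $\bW_N/N$.

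\textbf{Part (i).} Let $W_N^{\mathrm{step}}$ be the step kernel taking the value $\omega_{ij}$ on $Q_{(i,j),N}$ for $i\ne j$ and $0$ on the diagonal blocks, and let $T_N$ be its integral operator. Since $T_N$ acts on step functions exactly as $\bW_N/N$ acts on $\mathbb R^N$ (with the isometry $\bm x\mapsto \sqrt N\sum_i x_i\mathbbm 1_{((i-1)/N,i/N]}$), and it annihilates the orthogonal complement of step functions, we get $\|\bW_N/N\|_{\op}=\|T_N\|_{\op}$.

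For the $L^2$ convergence, note that off the diagonal blocks $W_N^{\mathrm{step}}$ coincides with $\widetilde W_N$ from Lemma~\ref{lem:riemann-approximation}~(i) with $d=2$. The diagonal blocks have total measure $1/N$ and the kernels are bounded by $1$. Hence
\[
\|W_N^{\mathrm{step}}-W\|_{L^2}\le \|\widetilde W_N-W\|_{L^2}+N^{-1/2}\rightarrow0.
\]
Then
\[
\bigl|\|T_N\|_{\op}-\|T_W\|_{\op}\bigr|\le\|T_N-T_W\|_{\op}\le\|T_N-T_W\|_{\HS}=\|W_N^{\mathrm{step}}-W\|_{L^2}\to0.
\]
Because $\beta\|T_W\|_{\op}<\rho$, the eventual bound $\beta\|\bW_N/N\|_{\op}\le\rho$ follows.

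\textbf{Part (ii).} By the triangle inequality, $\|A_N\|_{\op}\le\|\bW_N\|_{\op}+\|\bar A_N\|_{\op}$. Choose $\rho_0$ with $\beta\|T_W\|_{\op}<\rho_0<\rho$. By the argument of part (i), for $N\ge N_0$ (enlarging $N_0$ if necessary) we have $\beta\|\bW_N/N\|_{\op}\le\rho_0$. So the event in (ii) forces
\[
\|\bar A_N\|_{\op}>t:=(\rho-\rho_0)N/\beta.
\]
Lemma~\ref{lem:adjacency-norm}~(i) with $t=\kappa N$, where $\kappa:=(\rho-\rho_0)/\beta$, gives
\[
\P\bigl(\|\bar A_N\|_{\op}\ge \kappa N\bigr)\le 2N\exp\{-c\kappa^2N^2/((1+\kappa)N)\}=2N e^{-c'N},
\]
which is the claimed bound $CNe^{-cN}$.

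The main subtlety is only bookkeeping. The $N_0$ in part (ii) must be taken uniform with the slightly smaller level $\rho_0$ rather than $\rho$, so that a margin linear in $N$ is left for the fluctuation $\bar A_N$; the rest is routine.
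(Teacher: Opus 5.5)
Your proposal is correct and matches the paper's proof essentially step for step. In part (i) you compare the hollow step kernel with the full step kernel $\widetilde W_N$ on the diagonal blocks of total area $1/N$, then pass to operator norms via $\|T_N-T_W\|_{\HS}$; your isometry argument for $\|\bW_N/N\|_{\op}=\|T_N\|_{\op}$ is a detail the paper leaves implicit. In part (ii) your intermediate level $\rho_0$ is exactly the paper's $\rho-\gamma$, and you then apply Lemma~\ref{lem:adjacency-norm}~(i) at a threshold linear in $N$.
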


\begin{proof}
For ease of notation, in this proof, let us write \(\bW=\bW_N\), \(A=A_N\), and \(\bar A=\bar A_N=A-\bW\). 

\smallskip
\noindent
\emph{Proof of \textnormal{(i)}.}
To begin with, define the hollow step-kernel induced by $\bm W_N$ as
\begin{equation*}
W_N^{\mathrm{step}}(x,y)
:=\sum_{i,j=1}^N \omega_{ij}\,
\mathbbm{1}((x,y)\in I_{i,N}\times I_{j,N}),
\qquad (x,y)\in[0,1]^2,
\end{equation*}
where for each $1\le i\le N$, $I_{i,N}:=((i-1)/N,i/N]$. 
 Write $T_N:=T_{W_N^{\mathrm{step}}}$ for the corresponding integral operator.
Let $\widetilde W_N$ be the non-hollow step-kernel whose value on the
$(i,j)$ block is $W(i/N,j/N)$, including the case $i=j$. Lemma~\ref{lem:riemann-approximation}~(i) gives:
\[
 \|\widetilde W_N-W\|_{L^2([0,1]^2)}
 \rightarrow0.
\]
Note that the kernels $\widetilde W_N$ and $W_N^{\mathrm{step}}$ differ only on
the $N$ diagonal blocks, whose total area is $1/N$.  Since both kernels
take values in $[0,1]$, we have:
\[
 \|W_N^{\mathrm{step}}-\widetilde W_N\|_{L^2}^2
 \le\frac1N
\quad\implies \quad 
 \|W_N^{\mathrm{step}}-W\|_{L^2}
 \rightarrow0.
\]
Moreover, 
\[
 \begin{aligned}
 \left|
 \left\|\frac{\bW}{N}\right\|_{\op}-\|T_W\|_{\op}
 \right|
 =
 \bigl|\|T_N\|_{\op}-\|T_W\|_{\op}\bigr| \le \|T_N-T_W\|_{\op}\le \|T_N-T_W\|_{\HS}
 =\|W_N^{\mathrm{step}}-W\|_{L^2}.
 \end{aligned}
\]
Hence,
\(
 \left\|\bW/N\right\|_{\op}
 \rightarrow\|T_W\|_{\op}.
\)
Since
\(
 \beta\|T_W\|_{\op}<\rho,
\)
the last assertion in part~\textnormal{(i)} follows for all sufficiently
large \(N\).

\smallskip
\noindent
\emph{Proof of \textnormal{(ii)}.}
To begin with, define
\[
 \gamma:=\frac12\left(\rho-\beta\|T_W\|_{\op}\right)>0.
\]
By part~\textnormal{(i)}, we have the following for all $N$ sufficiently large:
\[
 \beta\left\|\frac{\bW}{N}\right\|_{\op}\le\rho-\gamma.
\]
Since \(A=\bW+\bar A\), we have:
\[
 \left\{\beta\left\|\frac AN\right\|_{\op}>\rho\right\}
 \subseteq
 \left\{\|\bar A\|_{\op}>\frac{\gamma N}{\beta}\right\}.
\]
Therefore, Lemma~\ref{lem:adjacency-norm}~(i), applied with
\(t=\gamma N/\beta\), proves part~\textnormal{(ii)}.
\end{proof}

\begin{lemma}
\label{lem:discrete-resolvent}
Write \(\bW=\bW_N\). Then the following assertions hold for all \(N\) sufficiently
large.
\begin{enumerate}
\item[(i)] The matrix $R_N$ satisfies
\[
 \sup_{N\ge N_0}\|R_N\|_{\op}\le\frac1{1-\rho},
 \qquad
 \sup_{N\ge N_0}N\max_{1\le i,j\le N}
 \left|R_{N,ij}-\delta_{ij}\right|\le K
\]
for some \(K=K(\beta,\rho)<\infty\).

\item[(ii)] Let \(\mathcal U_{\beta,N}\) be the kernel of
\(T_N(I-\beta T_N)^{-1}\). Equivalently, it is constant on each
\((i,j)\) block, with value
\[
 \mathcal U_{\beta,N}(x,y)
 =\frac{N}{\beta}\left(R_{N,ij}-\delta_{ij}\right)
\]
whenever
\(
 (x,y)\in
 ((i-1)/N,i/N]\times((j-1)/N,j/N].
\)
Then
\[
 \|\mathcal U_{\beta,N}-U_\beta\|_{L^2([0,1]^2)}\rightarrow0.
\]

\item[(iii)] For each fixed \(m\ge2\), define
\[
 D_{m,N}(i):=N\left(\frac{\bW}{N}\right)^m_{ii}.
\]
Then
\[
 \frac1N\sum_{i=1}^N
 \left|D_{m,N}(i)-W_m(i/N,i/N)\right|^2\rightarrow0.
\]

\item[(iv)] With \(q:=\beta\|T_W\|_{\op}<1\), for every \(M\ge2\),
\[
 \sup_{x\in[0,1]}
 \left|\sum_{m>M}\beta^mW_m(x,x)\right|
 \le\frac{\beta^2q^{M-1}}{1-q}.
\]

\item[(v)] The diagonal correction satisfies
\[
 \frac1N\sum_{i=1}^N
 \left|N(R_{N,ii}-1)-\bar h_\beta(i/N)\right|^2\rightarrow0.
\]
\end{enumerate}
\end{lemma}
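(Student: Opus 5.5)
Write $T_N$ for the integral operator of the hollow step-kernel $W_N^{\mathrm{step}}$, as in the proof of Lemma~\ref{lem:high-temperature}. The guiding observation is that $N^{-1}\bW_N$ acting on $\mathbb R^N$ is unitarily equivalent to $T_N$ restricted to step functions. Under the isometry $\bm v\mapsto \sqrt N\sum_i v_i\mathbbm 1_{I_{i,N}}$, the matrix power $(\bW/N)^m$ corresponds to $T_N^m$, and the kernel of $T_N^m$ on the $(i,j)$ block equals $N(\bW/N)^m_{ij}$. I will prove the five parts in the order (i), (iv), (ii), (iii), (v).

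For (i), Lemma~\ref{lem:high-temperature}(i) gives $\|\beta\bW/N\|_{\op}\le\rho$ for $N\ge N_0$. The Neumann series then gives $\|R_N\|_{\op}\le(1-\rho)^{-1}$. For the entrywise bound, write $R_N-I=\sum_{m\ge1}(\beta\bW/N)^m$. The $m=1$ term has entries at most $\beta/N$. For $m\ge2$, bound $|\bm e_i^\top B^m\bm e_j|\le\|B\bm e_i\|_2\|B\|_{\op}^{m-2}\|B\bm e_j\|_2\le(\beta^2/N)\rho^{m-2}$, exactly as in the computation following \eqref{eq:annealed-determinant-comparison}. Summing yields $K=\beta+\beta^2/(1-\rho)$.

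Part (iv) is immediate from $|W_m(x,x)|\le\|T_W\|_{\op}^{m-2}$ summed geometrically.

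For (ii), note that $\mathcal U_{\beta,N}$ is the kernel of $\sum_{m\ge1}\beta^{m-1}T_N^m$, while $U_\beta$ is the kernel of $\sum_{m\ge1}\beta^{m-1}T_W^m$. Passing to Hilbert--Schmidt norms, I use
\[
\|T_N^m-T_W^m\|_{\HS}\le m\,\max(\|T_N\|_{\op},\|T_W\|_{\op})^{m-1}\|T_N-T_W\|_{\HS},
\]
which follows by telescoping and the bound $\|AB\|_{\HS}\le\|A\|_{\op}\|B\|_{\HS}$. Since $\beta\|T_N\|_{\op}\le\rho$ eventually, the series $\sum_m m\beta^{m-1}\rho^{m-1}/\beta^{m-1}$ converges. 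Together with $\|T_N-T_W\|_{\HS}=\|W_N^{\mathrm{step}}-W\|_{L^2}\to0$, this gives (ii). One should check that the series convergence is uniform in $N$; the bound $m\rho^{m-1}$ handles this.

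Part (iii) is the main obstacle, because it concerns diagonal values of kernels, which an $L^2$ kernel bound does not control. I would expand
\[
D_{m,N}(i)=N^{-(m-1)}\sum_{j_1,\dots,j_{m-1}}\omega_{ij_1}\omega_{j_1j_2}\cdots\omega_{j_{m-1}i}
\]
and apply Lemma~\ref{lem:riemann-approximation}(ii) with $r=1$, $s=m-1$, and $F(x,\bm y)=W(x,y_1)W(y_1,y_2)\cdots W(y_{m-1},x)$. This $F$ is Riemann integrable by Lemma~\ref{lem:riemann-approximation}(iii), and its integral over $\bm y$ is $G(x)=W_m(x,x)$. The remaining task is to control the discrepancy coming from hollowness: $\omega_{jj}=0$ while $W(j/N,j/N)$ need not vanish. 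The index tuples with some coincidence between consecutive indices (cyclically) number $O(N^{m-2})$, each with summand bounded by $1$. Their contribution to $D_{m,N}(i)$ is therefore $O(1/N)$ uniformly in $i$. After removing them, the sums agree exactly, and (iii) follows. This step also gives the Riemann integrability of $x\mapsto W_m(x,x)$ claimed earlier in the paper.

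For (v), write $N(R_{N,ii}-1)=\sum_{m\ge2}\beta^mD_{m,N}(i)$; the $m=1$ term vanishes because $\bW$ is hollow. I truncate at level $M$. The tail of the discrete sum is at most $\beta^2\rho^{M-1}/(1-\rho)$ uniformly in $i$, by the bound from (i), and the tail of $\bar h_\beta$ is controlled by (iv). For the finite part, Cauchy--Schwarz combined with (iii) sends the average squared error to zero. Letting $N\to\infty$ and then $M\to\infty$ completes the proof.
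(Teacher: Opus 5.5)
Your proposal is correct and follows essentially the same route as the paper: the same Neumann-series entrywise bound in (i), the same geometric summation in (iv), the same walk expansion with an $O(m/N)$ hollowness correction combined with Lemma~\ref{lem:riemann-approximation}(ii)--(iii) in (iii), and the same truncate-at-$M$ argument in (v). The only difference is cosmetic and lies in (ii), where you expand $\sum_{m\ge1}\beta^{m-1}T^m$ and telescope $T_N^m-T_W^m$ instead of using the resolvent identity $(I-\beta S)^{-1}(S-T)(I-\beta T)^{-1}$; both give the same bound $(1-\rho)^{-2}\|T_N-T_W\|_{\HS}$.
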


\begin{proof}
\textnormal{(i).}
By Lemma~\ref{lem:high-temperature}~(i), for all sufficiently large \(N\),
\[
 \|R_N\|_{\op}
 \le
 \frac{1}{1-\beta\|\bW/N\|_{\op}}
 \le\frac{1}{1-\rho}.
\]
Moreover,
\[
 \frac{N}{\beta}(R_N-I)
 =
 \bW+\sum_{m=1}^{\infty}
 \beta^m\frac{\bW^{m+1}}{N^m}.
\]
Let \(\bm\omega_i\) denote the \(i\)th row of \(\bW\). Since
\(\|\bm\omega_i\|_2\le\sqrt N\), for every \(m\ge1\),
\[
 \begin{aligned}
 \left|
 \frac{(\bW^{m+1})_{ij}}{N^m}
 \right|
 &=
 \left|
 \left\langle
 \frac{\bm\omega_i}{\sqrt N},
 \left(\frac{\bW}{N}\right)^{m-1}
 \frac{\bm\omega_j}{\sqrt N}
 \right\rangle
 \right|\\
 &\le
 \left\|\frac{\bW}{N}\right\|_{\op}^{m-1}.
 \end{aligned}
\]
Therefore
\[
 \begin{aligned}
 \frac{N}{\beta}|R_{N,ij}-\delta_{ij}|
 &\le
 1+
 \sum_{m=1}^{\infty}
 \beta^m
 \left\|\frac{\bW}{N}\right\|_{\op}^{m-1}\\
 &\le
 1+\frac{\beta}{1-\rho}.
 \end{aligned}
\]
This proves part~\textnormal{(i)}.

\vspace{0.1in}
\noindent
\textnormal{(ii).}
For a bounded self-adjoint operator $Q$ such that
$\beta\|Q\|_{\op}<1$, define
\[
 \Phi_\beta(Q):=Q(I-\beta Q)^{-1}.
\]
For any two such operators \(S,T\), the resolvent
identity gives
\begin{equation}\label{eq:Phi-resolvent-identity}
 \Phi_\beta(S)-\Phi_\beta(T)
 =
 (I-\beta S)^{-1}
 (S-T)
 (I-\beta T)^{-1}.
\end{equation}

Now, under the block-constant identification, \(T_N\) is represented by
\(\bW/N\). Hence \(\Phi_\beta(T_N)\) is represented by
\[
 \frac{\bW}{N}
 \left(I-\frac{\beta\bW}{N}\right)^{-1}.
\]
Since a step-kernel with block values \(k_{ij}\) is represented in the
normalized block basis by the matrix \((k_{ij}/N)_{i,j}\), the block values
of the kernel of \(\Phi_\beta(T_N)\) are \(\bW R_N\). The following identity
\[
 R_N-I=\frac{\beta\bW}{N}R_N
\]
gives
\[
 \bW R_N=\frac{N}{\beta}(R_N-I).
\]
Thus the kernel of \(\Phi_\beta(T_N)\) is \(\mathcal U_{\beta,N}\).
Applying \eqref{eq:Phi-resolvent-identity} with \(S=T_N\) and \(T=T_W\),
\[
 \begin{aligned}
 \|\Phi_\beta(T_N)-\Phi_\beta(T_W)\|_{\HS}
 &\le
 \|(I-\beta T_N)^{-1}\|_{\op}\,
 \|T_N-T_W\|_{\HS}\,
 \|(I-\beta T_W)^{-1}\|_{\op}.
 \end{aligned}
\]
The two resolvent norms are uniformly bounded, and
Lemma~\ref{lem:high-temperature}~(i) gives
\(
 \|T_N-T_W\|_{\HS}\to0.
\)
Since \(\Phi_\beta(T_W)\) has kernel \(U_\beta\), part~\textnormal{(ii)} follows.

\vspace{0.1in}
\noindent
\textnormal{(iii).}
Fix \(m\ge2\). Writing \(v_0=v_m=i\), we have
\begin{equation}\label{eq:discrete-closed-walk}
 D_{m,N}(i)
 =
 \frac{1}{N^{m-1}}
 \sum_{v_1,\ldots,v_{m-1}=1}^N
 \prod_{r=0}^{m-1}\omega_{v_rv_{r+1}}.
\end{equation}
Let
\[
 \widetilde D_{m,N}(i)
 :=
 \frac{1}{N^{m-1}}
 \sum_{v_1,\ldots,v_{m-1}=1}^N
 \prod_{r=0}^{m-1}
 W(v_r/N,v_{r+1}/N),
\]
where again \(v_0=v_m=i\). Since
\(\omega_{uv}=W(u/N,v/N)\) whenever \(u\ne v\), a summand in
\eqref{eq:discrete-closed-walk} can differ from the corresponding summand in
\(\widetilde D_{m,N}(i)\) only if \(v_r=v_{r+1}\) for at least one
\(r\in\{0,\ldots,m-1\}\). For each fixed \(r\), this constraint leaves at
most \(m-2\) free indices and therefore gives at most \(N^{m-2}\) tuples.
Since all factors lie in \([0,1]\),
\begin{equation}\label{eq:hollow-diagonal-error}
 \max_{1\le i\le N}
 \left|D_{m,N}(i)-\widetilde D_{m,N}(i)\right|
 \le \frac{m}{N}.
\end{equation}
Define
\[
 F_m(x;x_1,\ldots,x_{m-1})
 :=
 W(x,x_1)W(x_1,x_2)\cdots W(x_{m-2},x_{m-1})W(x_{m-1},x).
\]
Then
\[
 W_m(x,x)
 =
 \int_{[0,1]^{m-1}}F_m(x;x_1,\ldots,x_{m-1})
 \,\d x_1\cdots\d x_{m-1},
\]
and
\[
 \widetilde D_{m,N}(i)
 =
 \frac{1}{N^{m-1}}
 \sum_{v_1,\ldots,v_{m-1}=1}^N
 F_m\left(
 \frac{i}{N};
 \frac{v_1}{N},\ldots,\frac{v_{m-1}}{N}
 \right).
\]
Since \(W\) is Riemann integrable,
Lemma~\ref{lem:riemann-approximation}~(iii) shows that \(F_m\) is Riemann
integrable on \([0,1]^m\). Applying
Lemma~\ref{lem:riemann-approximation}~(ii) with \(r=1\) and \(s=m-1\) gives
\[
 \frac1N\sum_{i=1}^N
 \left|
 \widetilde D_{m,N}(i)-W_m(i/N,i/N)
 \right|^2
 \rightarrow0.
\]
Together with \eqref{eq:hollow-diagonal-error}, this proves part~\textnormal{(iii)}.

\vspace{0.1in}
\noindent
\textnormal{(iv).}
For every \(m\ge2\), symmetry of \(W\) gives
\[
 W_m(x,x)
 =
 \left\langle
 W(x,\cdot),
 T_W^{m-2}W(\cdot,x)
 \right\rangle_{L^2}.
\]
Consequently,
\[
 |W_m(x,x)|
 \le
 \|W(x,\cdot)\|_2\,
 \|T_W\|_{\op}^{m-2}\,
 \|W(\cdot,x)\|_2
 \le
 \|T_W\|_{\op}^{m-2}.
\]
If \(q:=\beta\|T_W\|_{\op}<1\), then
\[
 \sup_{x\in[0,1]}
 \left|
 \sum_{m>M}\beta^mW_m(x,x)
 \right|
 \le
 \beta^2\sum_{m>M}q^{m-2}
 =
 \frac{\beta^2q^{M-1}}{1-q}.
\]

\vspace{0.1in}
\noindent
\textnormal{(v).}
The Neumann series gives
\[
 R_N
 =
 \sum_{m=0}^{\infty}
 \left(\frac{\beta\bW}{N}\right)^m.
\]
Since \(\omega_{ii}=0\),
\[
 N(R_{N,ii}-1)
 =
 \sum_{m=2}^{\infty}\beta^mD_{m,N}(i).
\]
The series converges absolutely because
\(\beta\|\bW/N\|_{\op}\le\rho<1\) for all \(N\ge N_0\).

Set \(B_N:=\beta\bW/N\). Since \(B_N\) is symmetric, for every \(m\ge2\),
\[
 \bm e_i^{\top}B_N^m\bm e_i
 =
 (B_N\bm e_i)^{\top}B_N^{m-2}(B_N\bm e_i).
\]
Hence, for all large \(N\),
\[
 \begin{aligned}
 \left|
 N\sum_{m>M}\bm e_i^{\top}B_N^m\bm e_i
 \right|
 &\le
 N\|B_N\bm e_i\|_2^2
 \sum_{m>M}\rho^{m-2}.
 \end{aligned}
\]
Moreover,
\[
 \|B_N\bm e_i\|_2^2
 =
 \frac{\beta^2}{N^2}\sum_{j=1}^N\omega_{ij}^2
 \le \frac{\beta^2}{N}.
\]
Therefore
\[
 \max_{1\le i\le N}
 \left|
 N\sum_{m>M}\bm e_i^{\top}B_N^m\bm e_i
 \right|
 \le
 \frac{\beta^2\rho^{M-1}}{1-\rho}.
\]

For a vector \(\bm a\in\mathbb R^N\), write
\[
 \|\bm a\|_{2,N}:=
 \left(\frac1N\sum_{i=1}^N|a_i|^2\right)^{1/2}.
\]
Define
\[
 \varepsilon_N
 :=
 \left\|
 \left(N(R_{N,ii}-1)-\bar h_\beta(i/N)\right)_{i=1}^N
 \right\|_{2,N},
\]
and
\[
 \delta_{m,N}
 :=
 \left\|
 \left(D_{m,N}(i)-W_m(i/N,i/N)\right)_{i=1}^N
 \right\|_{2,N}.
\]
Splitting the two series at \(M\) and applying Minkowski's inequality,
\[
 \begin{aligned}
 \varepsilon_N
 \le
 \sum_{m=2}^{M}\beta^m\delta_{m,N}
 &+\left\|
 \left(\sum_{m>M}\beta^mD_{m,N}(i)\right)_{i=1}^N
 \right\|_{2,N}\\
 &+\left\|
 \left(\sum_{m>M}\beta^mW_m(i/N,i/N)\right)_{i=1}^N
 \right\|_{2,N}.
 \end{aligned}
\]
Since \(\|\bm a\|_{2,N}\le\max_i|a_i|\), the discrete and continuum tail
bounds give
\[
 \varepsilon_N
 \le
 \sum_{m=2}^{M}\beta^m\delta_{m,N}
 +\frac{\beta^2\rho^{M-1}}{1-\rho}
 +\frac{\beta^2q^{M-1}}{1-q}.
\]
For every fixed \(M\), part~\textnormal{(iii)} implies that the first term
converges to zero as \(N\to\infty\). Thus
\[
 \limsup_{N\to\infty}\varepsilon_N
 \le
 \frac{\beta^2\rho^{M-1}}{1-\rho}
 +\frac{\beta^2q^{M-1}}{1-q}.
\]
Letting \(M\to\infty\) proves part~\textnormal{(v)}.
\end{proof}

\begin{lemma}
\label{lem:random-diagonal-powers}
For \(m\ge2\), define
\[
 C_{m,N}(i):=N\left(\frac{A_N}{N}\right)^m_{ii}.
\]
Then the following assertions hold.
\begin{enumerate}
\item[(i)] As $N\rightarrow \infty$,
\[
 \frac1N\sum_{i=1}^N
 \E\left[\left(C_{2,N}(i)-d_W(i/N)\right)^2\right]\rightarrow0.
\]
\item[(ii)] For every fixed \(m\ge3\), as $N\rightarrow \infty$,
\[
 \frac1N\sum_{i=1}^N
 \E\left[\left(C_{m,N}(i)-W_m(i/N,i/N)\right)^2\right]\rightarrow0.
\]
\end{enumerate}
\end{lemma}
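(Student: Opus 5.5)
We use a bias--variance decomposition: for each $i$, write
\[
\E\bigl[(C_{m,N}(i)-W_m(i/N,i/N))^2\bigr]=\operatorname{Var}(C_{m,N}(i))+\bigl(\E C_{m,N}(i)-W_m(i/N,i/N)\bigr)^2 .
\]
We then show that the average over $i$ of each piece tends to zero.

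\textbf{Part (i).} Here $C_{2,N}(i)=N^{-1}\sum_{j\ne i}A_{ij}$, a normalized sum of independent Bernoulli variables. Its variance is at most $1/(4N)$. Its mean is $N^{-1}\sum_{j\ne i}\omega_{ij}$, which differs from $N^{-1}\sum_{j}W(i/N,j/N)$ by at most $1/N$ (only the diagonal term is dropped). Applying Lemma~\ref{lem:riemann-approximation}~(ii) with $r=s=1$ and $F=W$ gives that the averaged squared deviation of this Riemann sum from $d_W(i/N)$ tends to $0$. This proves (i).

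\textbf{Part (ii), mean.} Fix $m\ge3$ and write
\[
C_{m,N}(i)=N^{1-m}\sum_{v_1,\dots,v_{m-1}}\prod_{r=0}^{m-1}A_{v_rv_{r+1}},\qquad v_0=v_m=i,
\]
a sum over closed walks of length $m$ from $i$. Since $A$ is hollow, only walks with $v_r\ne v_{r+1}$ contribute. Split the walks into two classes.
\begin{enumerate}
\item[(a)] Walks whose $m$ vertex-slots $v_0,\dots,v_{m-1}$ are all distinct. All $m$ edges are then distinct, so by independence $\E\prod A=\prod\omega$.
\item[(b)] Walks with a repeated vertex. There are $O(N^{m-2})$ of them, and each term lies in $[0,1]$, so their total contribution to $C_{m,N}(i)$ and to $\E C_{m,N}(i)$ is $O(1/N)$ uniformly in $i$.
\end{enumerate}
Hence $\E C_{m,N}(i)=D_{m,N}(i)+O(1/N)$ uniformly in $i$, with $D_{m,N}$ as in Lemma~\ref{lem:discrete-resolvent}~(iii). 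That lemma then gives that the averaged squared bias vanishes.

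\textbf{Part (ii), variance.} Expand $\operatorname{Var}(C_{m,N}(i))$ as $N^{2-2m}$ times a sum over pairs of closed walks $(\gamma,\gamma')$ from $i$ of $\operatorname{Cov}\bigl(\prod_{e\in\gamma}A_e,\prod_{e\in\gamma'}A_e\bigr)$. This covariance vanishes unless $\gamma$ and $\gamma'$ share at least one unoriented edge, and it is bounded by $1$ in absolute value. Both walks pass through $i$, so together they have at most $2m-1$ vertex slots, one of which is fixed at $i$; this leaves $2m-2$ free indices a priori. Requiring a shared edge $\{u,v\}$ forces at least one further coincidence: $\gamma'$ must contain a vertex of $\gamma$ other than $i$, or, if the shared edge contains $i$, its other endpoint must coincide. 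Either way the number of free indices is at most $2m-3$, giving at most $O(N^{2m-3})$ contributing pairs. Therefore $\operatorname{Var}(C_{m,N}(i))=O(N^{2-2m}N^{2m-3})=O(1/N)$ uniformly in $i$, and its average over $i$ tends to $0$.

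\textbf{Main obstacle.} The delicate step is the combinatorial count in the variance bound: one must check carefully that sharing an edge costs a full factor of $N$, including the degenerate walks with repeated vertices. These degenerate walks are few enough that even without exploiting any edge sharing they contribute only $O(N^{2m-4})$ pairs.
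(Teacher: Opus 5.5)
Your proposal is correct and follows essentially the same route as the paper. Both use a bias--variance split, compare $\E C_{m,N}(i)$ with $D_{m,N}(i)$ up to $O(1/N)$ via the $O(N^{m-2})$ degenerate walks plus Lemma~\ref{lem:discrete-resolvent}~(iii), and bound the variance by an $O(N^{2m-3})$ count of edge-sharing walk pairs. One harmless slip is in your closing remark: pairs in which only one walk is degenerate number $O(N^{2m-3})$, not $O(N^{2m-4})$, but this still suffices and your main edge-sharing count already covers them.
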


\begin{proof}
(i). For $m=2$, we have:
\[
 C_{2,N}(i)=\frac1N\sum_{j=1}^NA_{ij}.
\]
Moreover, \(\mathrm{Var}(C_{2,N}(i))=O(N^{-1})\) uniformly in \(i\).
Applying Lemma~\ref{lem:riemann-approximation}~(ii) to \(F=W\), and
accounting for the omitted diagonal term, gives
\[
 \frac1N\sum_{i=1}^N
 \left|\E C_{2,N}(i)-d_W\left(\frac{i}{N}\right)\right|^2
 \rightarrow0.
\]
The bias--variance decomposition now gives us:
\[
 \frac1N\sum_{i=1}^N
 \E\left[\left(C_{2,N}(i)-d_W(i/N)\right)^2\right] \rightarrow0.
\]
This proves part~\textnormal{(i)}.

\vspace{0.1in}
\noindent
(ii). Fix $m\ge3$.  Writing $v_0=v_m=i$, note that
\begin{equation}\label{eq:random-walk-expansion}
 C_{m,N}(i)
 =N^{1-m}\sum_{1\le v_1,\ldots,v_{m-1}\le N}
 \prod_{r=0}^{m-1}A_{v_rv_{r+1}}.
\end{equation}
Let \(D_{m,N}(i)\) be as in Lemma~\ref{lem:discrete-resolvent}~(iii).
Note that \(D_{m,N}(i)\) is obtained from \eqref{eq:random-walk-expansion} by
replacing every $A_{uv}$ with $\omega_{uv}$.  The expectation of a walk
monomial differs from the corresponding
product of edge means only if an unoriented edge is traversed at least twice.
For fixed $m$ and root $i$, there are $O(N^{m-2})$ such walks. Also,
self-loops contribute zero to both $C_{m,N}(i)$ and $D_{m,N}(i)$.
Therefore, there exists a constant $K_m$ such that
\begin{equation}\label{eq:walk-bias}
 \max_i\left|\E C_{m,N}(i)-D_{m,N}(i)\right|
 \le\frac{K_m}{N}.
\end{equation}

In view of Lemma~\ref{lem:discrete-resolvent}~(iii) and
\eqref{eq:walk-bias}, we have:
\begin{equation}\label{eq:walk-expectation}
 \frac1N\sum_{i=1}^N
 \left|\E C_{m,N}(i)-W_m(i/N,i/N)\right|^2\rightarrow0.
\end{equation}

Next, let $X_\gamma$ be the monomial in
\eqref{eq:random-walk-expansion} associated with a rooted closed walk
\(\gamma\).  Two monomials are independent when their unoriented edge sets
are disjoint.  The number of ordered pairs of rooted length-$m$ walks
sharing an edge is $O_m(N^{2m-3})$.  Since $|X_\gamma|\le1$, there exists
a constant $K_m'$ such that
\begin{equation}\label{eq:walk-variance}
 \max_i\operatorname{Var}(C_{m,N}(i))\le\frac{K_m'}{N}.
\end{equation}
Combining \eqref{eq:walk-expectation} and \eqref{eq:walk-variance}, for each fixed $m\ge3$, we have:
\[
 \frac1N\sum_{i=1}^N
 \E\left[\left(C_{m,N}(i)-W_m(i/N,i/N)\right)^2\right] \rightarrow0.
\]
This proves part~\textnormal{(ii)}.
\end{proof}

\begin{lemma}\label{lem:trace-variance}
Let \(\bar A=\bar A_N\) and \(R=R_N\). Then:
\begin{enumerate}
\item[(i)] \(\operatorname{Var}\{\tr((R-I)\bar A^2)\}=O(N)\).
\item[(ii)] \(\operatorname{Var}\{\tr((R-I)\bar A(R-I)\bar A)\}=O(1)\).
\item[(iii)] \(\operatorname{Var}\{\tr((R\bar A)^3)\}=O(N^3\log N)\).
\item[(iv)] \(\operatorname{Var}\{\tr((R\bar A)^4)\}
=O(N^4(\log N)^2)\).
\end{enumerate}
Moreover, the bounds in parts~\textnormal{(iii)}--\textnormal{(iv)}
remain valid when \(R\) is replaced by any deterministic, symmetric,
positive-definite matrix with uniformly bounded operator norm.
\end{lemma}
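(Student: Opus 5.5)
We prove all four bounds by one device: the variance is written as a sum over pairs of closed walks, and the graph structure of each pair is counted. Write $\bar A=\sum_{i<j}\bar A_{ij}X_{ij}$, with the $\bar A_{ij}$ independent, centered and bounded by $1$. Each trace is then a sum over closed walks $\gamma=(v_0,\dots,v_{k-1})$ of a deterministic weight times a monomial $X_\gamma=\prod \bar A_{e}$ in the edge variables. The weight is built from entries of $R$ or $R-I$. For two walks $\gamma,\gamma'$ we have $\operatorname{Cov}(X_\gamma,X_{\gamma'})=0$ unless two conditions hold. First, the walks share an unoriented edge. Second, in the union multigraph every edge appears at least twice.

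We use the entrywise bounds from Lemma~\ref{lem:discrete-resolvent}~(i): $|(R-I)_{ij}|\le K/N$ for all $i,j$, and $|R_{ij}|\le K/N$ for $i\ne j$, while $R_{ii}=O(1)$. The proof is then a count of admissible index patterns, weighted by the powers of $N^{-1}$ coming from the off-diagonal resolvent entries.

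\textbf{Part (i).} Here $\tr((R-I)\bar A^2)=\sum_{a,b,c}(R-I)_{ab}\bar A_{bc}\bar A_{ca}$, and each coefficient is $O(N^{-1})$. The free indices are $a,b,c$ for one walk and $a',b',c'$ for the other. The edges are $\{b,c\},\{c,a\}$ and $\{b',c'\},\{c',a'\}$. For the covariance to be nonzero, each of these edges must appear at least twice in the union and the two edge sets must overlap. This forces at most $4$ distinct vertices among the six indices. There are at most $O(N^4)$ such patterns, each with weight $O(N^{-2})$. The diagonal case $a=b$ has weight $O(N^{-1})$ per factor; there the pattern reduces to about $3$ free indices, giving $O(N^3)\cdot N^{-2}$. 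In every case the total is $O(N^2)$ times the weight. A careful tally should give $O(N)$ after separating the diagonal entries $a=b$ from the off-diagonal ones. Alternatively, we can write the quantity as a quadratic form $\bar{\bm a}^\top Q\bar{\bm a}$ in the edge vector with $\|Q\|_F^2=O(N)$, and apply the standard variance formula for quadratic forms in independent centered variables. That formula bounds the variance by $\|Q\|_F^2$ plus a fourth-moment diagonal term, and this route seems cleanest.

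\textbf{Part (ii).} We again write the trace as a quadratic form in the independent entries, $\sum_{e,f}Q_{ef}\bar A_e\bar A_f$. Here $Q_{ef}$ is made of products of two entries of $R-I$, so $|Q_{ef}|=O(N^{-2})$. The variance is then bounded by $2\|Q\|_F^2+\sum_e Q_{ee}^2\,\E\bar A_e^4$. Since $\|Q\|_F^2\le \|R-I\|_F^4$-type bounds give $O(1)$ by the Frobenius estimate $\|R-I\|_F=O(1)$, the variance is $O(1)$.

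\textbf{Parts (iii) and (iv).} For the cubic and quartic traces we would avoid delicate walk counting. We plan to use the Efron--Stein inequality, resampling one edge variable at a time, together with Lemma~\ref{lem:adjacency-norm}~(ii). Changing $\bar A_e$ changes $\bar A$ by $\Delta X_e$, with $|\Delta|\le1$. The cubic trace then changes by at most
\[
3\|R\|_{\op}^3\bigl|\tr\text{-type terms}\bigr|\lesssim \|X_e R\bar A R\bar A R\|_{*}.
\]
This quantity is controlled by rank-$2$ bounds of the form $\|\bar A\|_{\op}^2+\|\bar A'\|_{\op}^2$, which gives per-edge squared changes of order $N\log N$ after taking expectations with the moment bounds. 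Summing over the $O(N^2)$ edges gives $O(N^3\log N)$. The quartic case is identical with one more factor, giving $O(N^4(\log N)^2)$. Only the properties that $R$ is deterministic and has bounded operator norm were used, which yields the stated extension to arbitrary such $R$. The main obstacle is part (i). A naive Efron--Stein argument gives only $O(N^2)$ there, because it loses the $1/N$ smallness of $R-I$. So part (i) genuinely requires the quadratic-form computation, exploiting $\max|(R-I)_{ij}|=O(N^{-1})$ and the independence-driven cancellation.
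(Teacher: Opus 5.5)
\textbf{Parts (i) and (ii).} These are sound. Writing each trace as a quadratic form $\sum_{e,f}Q_{ef}\bar A_e\bar A_f$ in the independent edge variables, and using $\operatorname{Var}\le 2\|Q\|_F^2+\sum_e Q_{ee}^2\E\bar A_e^4$, is the matrix form of the paper's edge-multiset counting. In (i), however, your walk count is too loose. A nonzero covariance forces the two walks to have the \emph{same} edge multiset. Hence at most $3$ distinct vertices occur, and the sum is $O(N^3)\cdot O(N^{-2})=O(N)$. Equivalently, $Q_{ef}\neq 0$ only when $e$ and $f$ share a vertex, which is what gives $\|Q\|_F^2=O(N^3\cdot N^{-2})=O(N)$. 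Your ``at most 4 vertices'' count is what produced the spurious $O(N^2)$.

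\textbf{Parts (iii)--(iv): a genuine gap.} You bound each Efron--Stein increment uniformly through the rank-two/operator-norm estimate, $|\Delta_e|\lesssim\|\bar A\|_{\op}^2+\|\bar A'\|_{\op}^2$ for the cubic trace. This gives $\E\Delta_e^2\lesssim\E\|\bar A\|_{\op}^4=O(N^2(\log N)^2)$, not $O(N\log N)$ as you assert. Summing over $\asymp N^2$ edges then yields only $O(N^4(\log N)^2)$. For the quartic trace the same route gives $O(N^5(\log N)^3)$. For the cubic term this matters beyond the statement itself: $N^{-4}\operatorname{Var}\{\tr((R\bar A)^3)\}$ would no longer tend to $0$, and that is what the proof of Theorem~\ref{thm:logdet} needs.

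\textbf{The missing idea.} Separate the part of $\Delta_{ab}$ that is linear in the perturbation, and sum its squares over edges instead of bounding it edge by edge. That part equals $k\,\xi_{ab}\tr(X_{ab}M)=2k\,\xi_{ab}M_{ab}$, where $M:=(R\bar A)^{k-1}R$ is symmetric. Therefore
\[
\sum_{a<b}\bigl(2k\,\xi_{ab}M_{ab}\bigr)^2\le 2k^2\|M\|_F^2\lesssim\|\bar A\|_{\op}^{2k-4}\|\bar A\|_F^2\le N^2\|\bar A\|_{\op}^{2k-4},
\]
using that $\|\bar A\|_F^2\le N^2$ deterministically. Only the terms containing at least two perturbation factors should be bounded per edge by the rank argument; each is $O(1+\|\bar A\|_{\op}^{k-2})$. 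By Lemma~\ref{lem:adjacency-norm}~(ii), both contributions are then $O(N^2(N\log N)^{k-2})$. This gives exactly $O(N^3\log N)$ and $O(N^4(\log N)^2)$. This is the paper's argument, phrased there with $S=R^{1/2}\bar AR^{1/2}$, and it uses only determinism and bounded operator norm of $R$. So your extension claim survives once this step is fixed.
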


\begin{proof}
Let $\delta_{i,j}:= \mathbbm{1}_{i= j}$ denote the Kronecker delta.  By Lemma~\ref{lem:discrete-resolvent}~(i), there exists
\(K<\infty\), independent of \(N\), such that
\[
\max_{1\le i,j\le N}|R_{ij}-\delta_{ij}|\le \frac{K}{N}.
\]
We can write
\[
\tr\{(R-I)\bar A^2\}
=
\sum_{i,j,k}(R_{ij}-\delta_{ij})X_{ijk},
\qquad
X_{ijk}:=\bar A_{jk}\bar A_{ki}.
\]
Therefore
\[
\operatorname{Var}\{\tr((R-I)\bar A^2)\}
=
\sum_{i,j,k}\sum_{i',j',k'}
(R_{ij}-\delta_{ij})(R_{i'j'}-\delta_{i'j'})
\operatorname{Cov}(X_{ijk},X_{i'j'k'}).
\]
The monomial \(X_{ijk}\) involves the unoriented edges
\(\{j,k\}\) and \(\{k,i\}\), counted with multiplicity. By independence
and centering, the covariance can be nonzero only if the two monomials
involve the same unordered edge multiset. For each fixed \((i,j,k)\), only
\(O(1)\) triplets \((i',j',k')\) satisfy this condition. Moreover,
\(|\operatorname{Cov}(X_{ijk},X_{i'j'k'})|\le2\). Hence
\[
\operatorname{Var}\{\tr((R-I)\bar A^2)\}
=O\left(N^3\cdot N^{-2}\right)
=O(N),
\]
which proves part~\textnormal{(i)}.

Similarly,
\[
\tr\{(R-I)\bar A(R-I)\bar A\}
=
\sum_{i,j,k,\ell}
(R_{ij}-\delta_{ij})(R_{k\ell}-\delta_{k\ell})Y_{ijk\ell},
\qquad
Y_{ijk\ell}:=\bar A_{jk}\bar A_{\ell i}.
\]
Terms with \(j=k\) or \(\ell=i\) vanish. For the remaining terms,
associate with \(Y_{ijk\ell}\) the multiset of unoriented edges
\[
\mathcal M_{ijk\ell}
:=
\bigl\{\{j,k\},\{\ell,i\}\bigr\},
\]
with multiplicities retained. Independence and centering imply that
\(
\operatorname{Cov}(Y_{ijk\ell},Y_{i'j'k'\ell'})=0
\)
unless
\(
\mathcal M_{ijk\ell}=\mathcal M_{i'j'k'\ell'}.
\)
For each fixed \((i,j,k,\ell)\), there are only \(O(1)\) quadruples
\((i',j',k',\ell')\) having the same edge multiset, and all covariance
terms are uniformly bounded. The product of the four resolvent-difference
entries appearing in each variance summand is \(O(N^{-4})\). Since there
are \(O(N^4)\) choices of \((i,j,k,\ell)\),
\[
\operatorname{Var}\{\tr((R-I)\bar A(R-I)\bar A)\}=O(1),
\]
which proves part~\textnormal{(ii)}.

For \(k\in\{3,4\}\), set \(S:=R^{1/2}\bar A R^{1/2}\).  Since
\(R\bar A=R^{1/2}SR^{-1/2}\), we have
\(\tr((R\bar A)^k)=\tr(S^k)\).  For each \(a<b\), let \(\bar{A}_{ab}'\) be an independent copy of
\(\bar{A}_{ab}\), and let \(\bar A^{(ab)}\) be obtained from \(\bar A\) by replacing
both \(\bar{A}_{ab}\) and \(\bar{A}_{ba}\) by \(\bar{A}_{ab}'\).  Then
\[
\bar A^{(ab)}
=
\bar A+\xi_{ab}(E_{ab}+E_{ba}),
\qquad
S^{(ab)} :=
R^{1/2}\bar A^{(ab)}R^{1/2}
=
S+D_{ab},
\]
where
\[
\xi_{ab}:=\bar{A}_{ab}'-\bar{A}_{ab},
\qquad
D_{ab}:=
\xi_{ab}R^{1/2}(E_{ab}+E_{ba})R^{1/2}.
\]
Define
\(\Delta_{ab}:=\tr((S^{(ab)})^k)-\tr(S^k)\).  Since the variables
\((\bar{A}_{ab})_{a<b}\) are independent, the
Efron--Stein inequality (see Theorem 3.1 in \cite{BoucheronLugosiMassart2013}) gives
\[
\operatorname{Var}\{\tr(S^k)\}
\le
\frac12\sum_{a<b}\E \Delta_{ab}^2.
\]

Since \(\bar{A}_{ab}'\) and \(\bar{A}_{ab}\) are centered Bernoulli edge variables with the same centering,
we have \(|\xi_{ab}|\le1\).  Moreover,
\(\operatorname{rank}(D_{ab})\le2\) and
\(\|D_{ab}\|_{\op}\le K\), where \(K\) is independent of \(N,a,b\).
In the expansion of
\(\tr((S+D_{ab})^k)-\tr(S^k)\), the terms containing exactly one
factor \(D_{ab}\) contribute \(k\tr(D_{ab}S^{k-1})\), while every other term contains \(\ell\ge2\) factors of \(D_{ab}\),
and has the form
\[
\tr\left(
D_{ab}S^{r_1}D_{ab}S^{r_2}\cdots D_{ab}S^{r_\ell}
\right),
\qquad
r_1+\cdots+r_\ell=k-\ell.
\]
The matrix inside the trace has rank at most two.  Hence
\[
\begin{aligned}
\left|
\tr\left(
D_{ab}S^{r_1}\cdots D_{ab}S^{r_\ell}
\right)
\right|
&\le
2\|D_{ab}\|_{\op}^{\ell}\|S\|_{\op}^{k-\ell}\\
&\le
K_k\{1+\|S\|_{\op}^{k-2}\},
\end{aligned}
\]
for some constant $K_k$ independent of $N$.  Since the number of such
terms depends only on \(k\),
\[
\Delta_{ab}
=
k\tr(D_{ab}S^{k-1})+Q_{ab},
\qquad
|Q_{ab}|
\le
K_k\{1+\|S\|_{\op}^{k-2}\}.
\]

It remains to sum the squared contributions of these two terms. Set
\(M:=R^{1/2}S^{k-1}R^{1/2}\).  Since \(M\) is symmetric,
\[
\tr(D_{ab}S^{k-1})
=
\xi_{ab}\tr\{(E_{ab}+E_{ba})M\}
=
2\xi_{ab}M_{ab}.
\]
Using \(|\xi_{ab}|\le1\), we obtain
\[
\begin{aligned}
\sum_{a<b}|\tr(D_{ab}S^{k-1})|^2
\le
4\sum_{a<b}M_{ab}^2
\le
2\|M\|_F^2\le
K\|S\|_{\op}^{2k-4}\|S\|_F^2,
\end{aligned}
\]
where in the last inequality above, we used
\(\|M\|_F\le\|R\|_{\op}\|S^{k-1}\|_F
\le K\|S\|_{\op}^{k-2}\|S\|_F\) for some constant $K$.
Furthermore, since there are less than $N^2$ pairs \(a<b\), we have
\[
\sum_{a<b}|Q_{ab}|^2
\le
2K_k^2N^2\left(1+\|S\|_{\op}^{2k-4}\right).
\]
Combining these estimates with
\((u+v)^2\le 2 u^2+2 v^2\) in the Efron--Stein inequality, we have:
\begin{equation}\label{eq:efron-bound}
\operatorname{Var}\{\tr(S^k)\}
\le
C_k\E\left[
\|S\|_{\op}^{2k-4}\|S\|_F^2
+
N^2\left(1+\|S\|_{\op}^{2k-4}\right)
\right]
\end{equation}
for some constant $C_k$ independent of $N$.
By Lemmas~\ref{lem:adjacency-norm}~(ii) and
\ref{lem:discrete-resolvent}~(i), we have the following for every fixed \(q\),
\[
 \E\|S\|_{\op}^q=O\{(N\log N)^{q/2}\}.
\]

Moreover, \(\|S\|_F^2\lesssim_\rho \|\bar A\|_F^2\le N^2\).
Substituting these estimates into \eqref{eq:efron-bound}, first with
\(k=3\) and then with \(k=4\), gives
\[
\operatorname{Var}\{\tr((R\bar A)^3)\}
=
O(N^3\log N),
\qquad
\operatorname{Var}\{\tr((R\bar A)^4)\}
=
O\bigl(N^4(\log N)^2\bigr).
\]
This proves parts~\textnormal{(iii)} and \textnormal{(iv)}.  

For the final
assertion, let \(R\) be any deterministic, symmetric, positive-definite
matrix with uniformly bounded operator norm, and set
\(S=R^{1/2}\bar A R^{1/2}\). Then
\[
 \|S\|_{\op}
 \le \|R\|_{\op}\|\bar A\|_{\op},
 \qquad
 \|S\|_F
 \le \|R\|_{\op}\|\bar A\|_F.
\]
Consequently, Lemma~\ref{lem:adjacency-norm}~(ii) gives the same
moment bounds for \(\|S\|_{\op}\), while
\(\|S\|_F^2=O(N^2)\). The proof now follows unchanged from the preceding argument.
\end{proof}

\section{Rademacher-Gaussian estimates}
In this section, we establish two auxiliary estimates for hybrid random
vectors with Rademacher and Gaussian components.  These estimates are used
in the proof of Proposition~\ref{prop:RG-comparison} to control the remainder
in the coordinatewise Lindeberg replacement and to replace the resulting
fourth-order hybrid terms by their fully Gaussian counterparts.

\begin{lemma}\label{lem:hybrid-domination}
Let \(B\) be a symmetric \(N\times N\) matrix with zero diagonal, and
suppose that \(\lambda_{\max}(B)\le1-\delta\) for some \(\delta>0\).
Let \(\bm{X}:=(X_1,\ldots,X_N)\) have independent coordinates, where, for
each \(i\), \(X_i\) is either a Rademacher random variable, a standard
Gaussian random variable, or identically zero; the choice may depend
on \(i\).  Then the following statements
hold.

\begin{enumerate}
\item[(i)] If \(S\) is nonnegative definite, has rank at most \(r\), and
\[
\left\|(I-B)^{-1/2}S(I-B)^{-1/2}\right\|_{\op}\le\frac12,
\]
then
\[
\E\exp\left\{\frac12\bm{X}^{\top}(B+S)\bm{X}\right\}
\le
2^{r/2}\det(I-B)^{-1/2}.
\]

\item[(ii)] Fix \(q\ge1\) and nonnegative integers
\(m_1,\ldots,m_q\).
There exists \(K=K(\delta,q,m_1,\ldots,m_q)<\infty\) such that, for all
deterministic vectors \(\bm{u}_1,\ldots,\bm{u}_q\),
\[
\E\left[
e^{\bm{X}^{\top}B\bm{X}/2}
\prod_{j=1}^q|\bm{u}_j^{\top}\bm{X}|^{m_j}
\right]
\le
K\det(I-B)^{-1/2}
\prod_{j=1}^q\|\bm{u}_j\|_2^{m_j}.
\]

\item[(iii)] Under the assumptions of \textnormal{(ii)}, if
\(\bm{u}_0\in\mathbb R^N\) satisfies \(\|\bm{u}_0\|_2^2\le\delta/4\), then there exists 
$K<\infty$ such that, for all
deterministic vectors \(\bm{u}_1,\ldots,\bm{u}_q\),
\[
\E\left[
e^{\bm{X}^{\top}B\bm{X}/2+(\bm{u}_0^{\top}\bm{X})^2/2}
\prod_{j=1}^q|\bm{u}_j^{\top}\bm{X}|^{m_j}
\right]
\le
K\det(I-B)^{-1/2}
\prod_{j=1}^q\|\bm{u}_j\|_2^{m_j}.
\]
\end{enumerate}
\end{lemma}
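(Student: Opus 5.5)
The plan is to reduce everything to Gaussian integrals by a coordinatewise domination argument: each Rademacher coordinate can be replaced by a Gaussian one, and each zero coordinate simply lowers the dimension. First, note that for any \(a\in\mathbb R\) and \(\sigma\) Rademacher, \(\E e^{a\sigma}=\cosh a\le e^{a^2/2}=\E e^{aZ}\). Since \(B\) (and \(B+S\), after handling the diagonal) has zero diagonal, the exponent \(\frac12\bm X^\top B\bm X\) is affine in each coordinate. Hence, conditioning on all other coordinates and replacing Rademacher coordinates one at a time by Gaussians, we get \(\E e^{\bm X^\top B\bm X/2}\le \E e^{\bm Y^\top B\bm Y/2}\). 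Here \(\bm Y\) has Gaussian coordinates wherever \(\bm X\) is nonzero. The zero coordinates amount to restricting to a principal submatrix \(B_J\), and by interlacing \(\det(I-B_J)^{-1/2}\le\det(I-B)^{-1/2}\) up to the factor handled below. More precisely, \(\det(I-B)=\det(I-B_J)\det(\text{Schur complement})\), and the Schur complement has eigenvalues in \((0,1]\) because it is \(I-B_{J^c}-\ldots\), with \(B_{J^c}\) having zero diagonal. I would verify this with the positive semidefinite correction term. The cleaner route is that \(\det(I-B_J)^{-1/2}=\E_{\bm Z}e^{\bm Z^\top B\bm Z/2}\) restricted to \(J\), which by Jensen (averaging the \(J^c\)-coordinates, whose linear term has mean zero) is at most the full Gaussian integral.

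For (i) the difficulty is that \(S\) need not be hollow, so the coordinatewise exponent is quadratic in \(X_i\). I would instead use \(e^{\bm X^\top S\bm X/2}\) with \(S=\sum_{k\le r}\bm v_k\bm v_k^\top\), and linearize each rank-one factor by a Hubbard--Stratonovich transform: \(e^{(\bm v_k^\top\bm X)^2/2}=\E_{g_k}e^{g_k\bm v_k^\top\bm X}\) with auxiliary Gaussians \(g_k\). The exponent then becomes \(\frac12\bm X^\top B\bm X+\bm g^\top V^\top\bm X\), which is affine in each coordinate, so the Rademacher-to-Gaussian domination above applies conditionally on \(\bm g\). Undoing the transform leaves the fully Gaussian quantity \(\det(I-B-S)^{-1/2}\). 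This equals \(\det(I-B)^{-1/2}\det(I-(I-B)^{-1/2}S(I-B)^{-1/2})^{-1/2}\), and the last determinant has at most \(r\) nontrivial eigenvalues, each in \([1/2,1]\), which gives the factor \(2^{r/2}\). Zero coordinates are handled by the same Jensen/restriction argument.

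For (ii) and (iii), I would bound polynomial weights by exponentials. For each \(j\) and a small \(t>0\), use \(|x|^{m}\le C_{m,t}\,\|\bm u\|^{m}(e^{tx/\|\bm u\|}+e^{-tx/\|\bm u\|})\) with \(x=\bm u_j^\top\bm X\), or more conveniently \(|x|^m\le C\|\bm u\|^m e^{\epsilon x^2/\|\bm u\|^2}\). Then apply Hölder's inequality, or multiply the exponentials directly. The product then becomes \(e^{\bm X^\top(B+S)\bm X/2}\) with \(S=\sum_j 2\epsilon\,\bm u_j\bm u_j^\top/\|\bm u_j\|^2\) (plus \(\bm u_0\bm u_0^\top\) in (iii)). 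This \(S\) is nonnegative definite with rank at most \(q\) (respectively \(q+1\)), and \(\|(I-B)^{-1/2}S(I-B)^{-1/2}\|_{\op}\le\delta^{-1}\|S\|_{\op}\). Choosing \(\epsilon\le \delta/(8q)\) and using \(\|\bm u_0\|^2\le\delta/4\) makes this at most \(1/2\), so (i) gives the bound with \(K=C\,2^{(q+1)/2}\).

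The main obstacle I expect is the bookkeeping in (i). The domination \(\cosh\le e^{a^2/2}\) must be applied conditionally inside the auxiliary Gaussian average, and this needs Fubini with finiteness, which is guaranteed by positive definiteness of \(I-B-S\). The zero-coordinate reduction should be checked via Jensen rather than interlacing.
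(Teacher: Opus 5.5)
Your proof is correct. Parts (ii) and (iii) match the paper: normalize, bound $|x|^m$ by $c\,e^{\varepsilon x^2/2}$, absorb the weights into a nonnegative definite $S$ of rank at most $q$ (or $q+1$) with transformed norm at most $1/2$, and invoke (i). The genuine difference is in how you prove (i).

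The paper does not linearize $S$. It proves the one-dimensional comparison $e^{a/2}\cosh\ell\le(1-a)^{-1/2}e^{\ell^2/(2(1-a))}=\E e^{aG^2/2+\ell G}$ for $a\in[0,1)$. So a Rademacher coordinate can be swapped for a Gaussian one even when the exponent is quadratic in that coordinate with a nonnegative coefficient. This gives $\E e^{\bm X^\top Q\bm X/2}\le\det(I-Q)^{-1/2}$ for every symmetric $Q$ with nonnegative diagonal and $\lambda_{\max}(Q)<1$, and (i) follows by taking $Q=B+S$.

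Your Hubbard--Stratonovich step instead reduces to the affine case, where only $\cosh\ell\le e^{\ell^2/2}$ is needed. The cost is auxiliary Gaussians and Tonelli interchanges, which are harmless because all integrands are nonnegative. Both routes end at $\det(I-B-S)^{-1/2}=\det(I-B)^{-1/2}\det(I-\widetilde S)^{-1/2}\le 2^{r/2}\det(I-B)^{-1/2}$.

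Two remarks on your treatment of zero coordinates.
\begin{itemize}
\item Your Schur-complement aside is false as stated. The Schur complement can have eigenvalues above $1$, e.g.\ when $B_{J,J^c}=0$ and $B_{J^c}\neq 0$ is traceless. Only its determinant is at most $1$, by Loewner monotonicity and Hadamard's inequality.
\item In the Jensen route, it is not enough that the cross term has mean zero. You also need the pure $J^c$ quadratic term, whose conditional mean is $\tfrac12\tr\{(B+S)_{J^c}\}=\tfrac12\tr(S_{J^c})\ge 0$.
\end{itemize}
Neither detour is actually needed. When the exponent is affine in a zero coordinate, its conditional factor is $1\le e^{\ell^2/2}=\E e^{\ell Z}$, so the same coordinatewise replacement absorbs it. This is essentially how the paper handles zero coordinates, noting that its right-hand side is at least one.
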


\begin{proof}
Let \(Q\) be
symmetric, with \(\lambda_{\max}(Q)<1\) and \(Q_{jj}\ge0\) for every
\(j\), and let \(\bm{G}\) have independent standard Gaussian coordinates.
We claim that
\begin{equation}\label{eq:coordinate-domination}
\E e^{\bm{X}^{\top}Q\bm{X}/2}
\le
\E e^{\bm{G}^{\top}Q\bm{G}/2}
=
\det(I-Q)^{-1/2}.
\end{equation}
To see \eqref{eq:coordinate-domination}, note that with
$\bm{X}_{-j}:=(X_i)_{i\ne j}$, \(a:=Q_{jj}\in[0,1)\), and
\(\ell:=\sum_{k\ne j}Q_{jk}X_k\), we have
\[
\E\left[e^{\bm{X}^{\top}Q\bm{X}/2}\Big| \bm{X}_{-j}\right]
=g(\bm{X}_{-j})\E_{X_j} e^{aX_j^2/2+\ell X_j},
\]
where $g(\bm{t}):=\exp(\bm{t}^{\top}\widetilde Q_j\bm{t}/2)$ and $\widetilde Q_j$ is the
$(N-1)\times(N-1)$ submatrix of $Q$ obtained by deleting its $j^\mathrm{th}$ row
and column. Now, if a random variable \(\xi\) is Rademacher, then
\begin{equation}\label{eq:one-coordinate-domination}
\E_\xi e^{a\xi^2/2+\ell\xi}
=
e^{a/2}\cosh\ell
\le
(1-a)^{-1/2}
\exp\left\{\frac{\ell^2}{2(1-a)}\right\}.
\end{equation}
Here we used \(\cosh\ell\le e^{\ell^2/2}\),
\(e^{a/2}\le(1-a)^{-1/2}\), and \(1/(1-a)\ge1\).  The same inequality
also holds when \(\xi\equiv0\), since the right-hand side is at least
one.  The right-hand side of \eqref{eq:one-coordinate-domination} is precisely
\(\E_{G_j}e^{aG_j^2/2+\ell G_j}\). Sequentially replacing each coordinate
of $\bm{X}$ by the corresponding coordinate of $\bm{G}$, and applying the conditional
argument each time, gives \eqref{eq:coordinate-domination}.

For part~\textnormal{(i)}, since
\(\lambda_{\max}(B)\le1-\delta\), the matrix \(I-B\) is positive
definite.  Set
\[
\widetilde S:=(I-B)^{-1/2}S(I-B)^{-1/2}.
\]
Then \(\widetilde S\) is nonnegative definite, has rank at most \(r\),
and all its eigenvalues lie in \([0,1/2]\).  Moreover,
\[
I-B-S
=
(I-B)^{1/2}(I-\widetilde S)(I-B)^{1/2},
\]
so \(I-B-S\) is positive definite.  Since \(B\) has zero diagonal and
\(S\) is nonnegative definite, \(B+S\) has nonnegative diagonal.
Applying \eqref{eq:coordinate-domination} with \(Q=B+S\) gives
\[
\begin{aligned}
\E\exp\left\{\frac12\bm{X}^{\top}(B+S)\bm{X}\right\}
&\le
\det(I-B-S)^{-1/2}\\
&=
\det(I-B)^{-1/2}\det(I-\widetilde S)^{-1/2}.
\end{aligned}
\]
Since at most \(r\) eigenvalues of \(\widetilde S\) are nonzero and each is
at most \(1/2\), we have
\(\det(I-\widetilde S)^{-1/2}\le2^{r/2}\),  proving
part~\textnormal{(i)}.

For part~\textnormal{(ii)}, let
\(\mathcal J:=\{j:m_j>0\}\).  If \(\bm{u}_j=\bm{0}\) for some
\(j\in\mathcal J\), both sides of the assertion vanish, so assume
otherwise and set
\(\widehat{\bm{u}}_j:=\bm{u}_j/\|\bm{u}_j\|_2\). Now, note that for every \(m\ge1\) and
\(\varepsilon>0\), there exists \(c_{m,\varepsilon}<\infty\) such that
\(|x|^m\le c_{m,\varepsilon}e^{\varepsilon x^2/2}\) for all $x\in \mathbb{R}$.  Therefore
\begin{equation}\label{eq:polynomial-perturbation}
\prod_{j=1}^q|\bm{u}_j^{\top}\bm{X}|^{m_j}
\le
K_\varepsilon
\prod_{j=1}^q\|\bm{u}_j\|_2^{m_j}
\exp\left\{\frac12\bm{X}^{\top}S_\varepsilon\bm{X}\right\},
\end{equation}
where
\(S_\varepsilon:=\varepsilon
\sum_{j\in\mathcal J}\widehat{\bm{u}}_j\widehat{\bm{u}}_j^{\top}\).
The smallest eigenvalue of \(I-B\) is at least \(\delta\), and hence
\[
\left\|
(I-B)^{-1/2}S_\varepsilon(I-B)^{-1/2}
\right\|_{\op}
\le
\frac{\varepsilon|\mathcal J|}{\delta}
\le
\frac{\varepsilon q}{\delta}.
\]
Choosing \(\varepsilon=\delta/(2q)\), part \textnormal{(i)} applies to
\(S_\varepsilon\), whose rank is at most \(q\). This, combined with 
\eqref{eq:polynomial-perturbation}, gives part~\textnormal{(ii)}.

For part~\textnormal{(iii)}, take
\[
S
:=
\bm{u}_0\bm{u}_0^{\top}
+
\varepsilon
\sum_{j\in\mathcal J}
\widehat{\bm{u}}_j\widehat{\bm{u}}_j^{\top}.
\]
The first term satisfies
\[
\left\|
(I-B)^{-1/2}\bm{u}_0\bm{u}_0^{\top}(I-B)^{-1/2}
\right\|_{\op}
=
\|(I-B)^{-1/2}\bm{u}_0\|_2^2
\le
\frac{\|\bm{u}_0\|_2^2}{\delta}
\le
\frac14.
\]
Taking \(\varepsilon=\delta/(4q)\) makes the transformed operator norm
of the second term at most \(1/4\).  Thus the transformed norm of \(S\)
is at most \(1/2\), and its rank is at most \(q+1\).  Applying
part~\textnormal{(i)} proves part~\textnormal{(iii)} and completes the
proof of Lemma \ref{lem:hybrid-domination}.
\end{proof}

\begin{lemma}
\label{lem:fourth-gaussianization}
Let \(M\) be a deterministic symmetric hollow matrix with entries in
\([0,1]\). Set \(B:=\beta M/N\), and suppose that
\(\lambda_{\max}(B)\le1-\delta\) for some \(\delta>0\).  Let
\((\sigma_j)_{j\le N}\) be independent Rademacher variables and
\((Z_j)_{j\le N}\) be independent standard Gaussian variables, 
the two families being independent.  For \(1\le i\le N\), recall that,
\[
\bm Y_{(i,0)}
:=
(\sigma_1,\ldots,\sigma_{i-1},0,Z_{i+1},\ldots,Z_N),
\qquad
\bm Z_{(i,0)}
:=
(Z_1,\ldots,Z_{i-1},0,Z_{i+1},\ldots,Z_N).
\]
Then there exists \(K_{\beta,\delta}<\infty\), independent of \(N\)
and \(M\), such that

\begin{eqnarray*}
&&\sum_{i=1}^N\left|
\E\left(\exp\left\{\frac12\bm Y_{(i,0)}^\top B \bm Y_{(i,0)}\right\} L_i(\bm Y_{(i,0)})^4\right) -
\E\left(\exp\left\{\frac12 \bm Z_{(i,0)}^{\top}B \bm Z_{(i,0)}\right\} L_i(\bm Z_{(i,0)})^4\right)\right|\\ &\le& K_{\beta,\delta} Z^G(B) N^{-2}.
\end{eqnarray*}

\end{lemma}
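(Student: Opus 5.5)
We will compare the two fourth-order expectations by a second Lindeberg telescoping over the coordinates $j\neq i$. For each fixed $i$, let $F_i(\bm x):=\exp\{\frac12\bm x^\top B\bm x\}L_i(\bm x)^4$, where $L_i(\bm x)=\bm b_i^\top\bm x$ with $\bm b_i$ the $i$th column of $B$. The vectors $\bm Y_{(i,0)}$ and $\bm Z_{(i,0)}$ differ only in coordinates $j<i$, which are Rademacher in the first and Gaussian in the second. We therefore interpolate through hybrid vectors $\bm V_{(i,j)}$, $0\le j\le i-1$, in which the first $j$ coordinates are Gaussian, coordinates $j+1,\dots,i-1$ are Rademacher, coordinate $i$ is zero, and the rest are Gaussian. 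Each step swaps the single coordinate $j$.

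Fix $j$ and write $\bm V^0$ for the hybrid vector with coordinate $j$ set to zero. Since $B_{jj}=0$, we have
\[
\tfrac12(\bm V^0+u\bm e_j)^\top B(\bm V^0+u\bm e_j)=\tfrac12\bm V^{0\top}B\bm V^0+uL_j(\bm V^0),\qquad L_i(\bm V^0+u\bm e_j)=L_i(\bm V^0)+uB_{ij}.
\]
Conditionally on $\bm V^0$, the function of $u$ is therefore $e^{\frac12\bm V^{0\top}B\bm V^0}\,e^{u\ell}(a+ub)^4$, where $\ell=L_j(\bm V^0)$, $a=L_i(\bm V^0)$ and $b=B_{ij}$, with $|b|\le\beta/N$. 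We expand $g(u):=e^{u\ell}(a+ub)^4$ in powers of $u$ and use that the Rademacher and Gaussian variables share their first three moments. The difference $\E_\sigma g(\sigma)-\E_Z g(Z)$ is then the difference of the $u^4$ and higher coefficients averaged against the two laws. Concretely, we write
\[
\E_\sigma g(\sigma)-\E_Z g(Z)=\E_\sigma[g(\sigma)-T_3 g(\sigma)]-\E_Z[g(Z)-T_3 g(Z)],
\]
where $T_3$ is the third-order Taylor polynomial. This remainder is controlled by terms of the form $|u|^4\sum_{k}\binom4k|\ell|^{4-k'}\dots$. The cleanest bookkeeping is that each term of $g^{(4)}$ is a product $\ell^{p}b^{r}(a+ub)^{4-r}e^{u\ell}$ with $p+r=4$.

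We then bound the remainder using Lemma~\ref{lem:hybrid-domination}~(iii). The factor $e^{u\ell}$ with $|u|$ Gaussian or Rademacher is absorbed after integrating out $u$: Gaussian integration gives $e^{\ell^2/2}$, which is exactly the $(\bm u_0^\top\bm X)^2/2$ term with $\bm u_0=\bm b_j$ and $\|\bm b_j\|^2\le\beta^2/N\le\delta/4$, and the Rademacher case is dominated by it. The monomials $|\ell|^p|a|^{q}$ fall under the product $\prod|\bm u^\top\bm X|^{m}$ with $\bm u=\bm b_j,\bm b_i$, whose norms are $O(N^{-1/2})$. The Taylor remainder in $u$ must be handled via the integral form $\int_0^1(1-t)^3g^{(4)}(tu)\,dt$, but since $|e^{tu\ell}|\le e^{|u\ell|}\le e^{u^2\epsilon/2+\ell^2/(2\epsilon)}$, this needs a slight modification. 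The safer route is to compare $e^{u\ell}$ with $e^{\ell^2/2}$ exactly, as in the bound on $r$ in Proposition~\ref{prop:RG-comparison}, by expanding in $u$ only the polynomial factor and keeping $e^{u\ell}$ whole. The moment identities $\E\cosh$ versus $\E e^{Z\ell}$ then produce terms such as $\cosh\ell-e^{\ell^2/2}=O(\ell^4e^{\ell^2/2})$.

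Each single swap is thus bounded by $K\,Z^G(B)$ times a sum of $\|\bm b_j\|^{p}\|\bm b_i\|^{4-r}|b|^r$ over $p+r\ge4$. The worst case is $r=0$, $p=4$, giving $N^{-2}\cdot N^{-2}=N^{-4}$. When $r\ge1$ the factor $|B_{ij}|\le\beta/N$ compensates, giving at most $N^{-1}\cdot N^{-3/2}\cdot N^{-3/2}$ or similar, all $O(N^{-4})$. Summing over $j<i$ and then over $i$ gives $N^2\cdot N^{-4}=N^{-2}$, as claimed.

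The main obstacle is the exponential weight $e^{u\ell}$ in the fourth-order remainder. Its careful domination by the $\bm u_0$-term of Lemma~\ref{lem:hybrid-domination}~(iii), uniformly over hybrid configurations, is the key step, and we would handle it exactly as the $r(x)$ bound \eqref{eq:r-bound} was handled.
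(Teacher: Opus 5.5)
Your proposal is correct and is essentially the paper's argument: the same second Lindeberg telescoping over the coordinates $j<i$, the same conditional function $e^{u\ell}(a+uB_{ij})^4$ obtained from $B_{jj}=0$, domination through Lemma~\ref{lem:hybrid-domination}~(iii) with $\bm u_0=\bm b_j$, and $O(N^2)$ swaps each costing $O(Z^G(B)N^{-4})$. The differences are minor.

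\begin{itemize}
\item The paper Taylor-expands to fifth order. It then bounds the isolated fourth-order term $-\frac{1}{12}\E\psi^{(4)}(0)$ and a sixth-order remainder separately.
\item Your third-order expansion with a fourth-order integral remainder already gives $O(N^{-4})$ directly. No ``slight modification'' is needed for $e^{tu\ell}$, since $\E_\zeta\bigl[|\zeta|^m e^{t\zeta\ell}\bigr]\lesssim_m(1+|\ell|^m)e^{\ell^2/2}$ for $t\in[0,1]$ and $\zeta$ either Rademacher or Gaussian.
\item As in the paper, you should treat the finitely many $N$ with $\beta^2/N>\delta/4$ separately. For these, use the crude bound from Lemma~\ref{lem:hybrid-domination}~(ii).
\end{itemize}
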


\begin{proof}
For fixed \(i\), we replace the Rademacher coordinates
\(1,\ldots,i-1\) successively by Gaussians.  More precisely, for
\(0\le k\le i-1\), let
\[
\bm Y_{(i,k)}
:=
(Z_1,\ldots,Z_k,\sigma_{k+1},\ldots,\sigma_{i-1},
0,Z_{i+1},\ldots,Z_N).
\]
Thus, \(\bm Y_{(i,i-1)}= \bm Z_{(i,0)}\). Writing
\(F_i(\bm{x}):=\exp(\bm{x}^{\top}B \bm{x}/2) L_i(\bm{x})^4\), we have by a telescoping argument
\[
\E F_i(\bm Y_{(i,0)})-\E F_i(\bm Z_{(i,0)})
=
\sum_{k=1}^{i-1}\Delta_{ik},
\qquad
\Delta_{ik}
:=
\E F_i(\bm Y_{(i,k-1)})-\E F_i(\bm Y_{(i,k)}).
\]
Fix \(k<i\), and define
\[
\bm{X}_0
:=
(Z_1,\ldots,Z_{k-1},0,\sigma_{k+1},\ldots,\sigma_{i-1},
0,Z_{i+1},\ldots,Z_N).
\]
Thus,
\[
\bm Y_{(i,k-1)}=\bm{X}_0+\sigma_k\bm{e}_k
\qquad \text{and}\qquad
\bm Y_{(i,k)}=\bm{X}_0+Z_k\bm{e}_k.
\]

Set
\(a:=L_i(\bm{X}_0)\), \(c:=L_k(\bm{X}_0)\), and \(d:=B_{ik}\).  Since
\(B_{kk}=0\), we have
\[
F_i(\bm{X}_0+u\bm{e}_k)
=
\exp\left(\frac12\bm{X}_0^{\top}B \bm{X}_0\right) e^{cu}(a+du)^4
=:\psi(u).
\]
Now, it is straightforward to verify that for every integer \(m\ge0\),
\begin{equation}\label{eq:psi-derivative}
\psi^{(m)}(u)
=
\exp\left(\frac12\bm{X}_0^{\top}B \bm{X}_0\right) e^{cu}
\sum_{\ell=0}^{\min\{m,4\}}
\binom m\ell
\frac{4!}{(4-\ell)!}
d^\ell c^{m-\ell}(a+du)^{4-\ell}.
\end{equation}
A fifth-order Taylor expansion, together with the facts
\(\E\sigma_k^r=\E Z_k^r\) for \(r=0,1,2,3,5\) and
\(\E\sigma_k^4-\E Z_k^4=-2\), gives
\begin{equation}\label{eq:taylor-replacement}
\Delta_{ik}
=
-\frac1{12}\E\psi^{(4)}(0)+\eta_{ik},
\end{equation}
where
\[
\eta_{ik}
=
\frac1{5!}\int_0^1(1-s)^5
\E\left[
\sigma_k^6\psi^{(6)}(s\sigma_k)
-
Z_k^6\psi^{(6)}(sZ_k)
\right]\d s.
\]

Let us first bound the fourth-order term.  Since \(B=\beta M/N\) and
\(0\le M_{rs}\le1\), we have:
\begin{equation}\label{eq:column-bounds}
|d|\le\frac{\beta}{N}
\qquad \text{and}\qquad
\|\bm{b}_j\|_2^2
=
\sum_{r=1}^N B_{rj}^2
\le\frac{\beta^2}{N}.
\end{equation}
Setting \(m=4\) and \(u=0\) in \eqref{eq:psi-derivative}, and recalling
that \(a=\bm{b}_i^{\top}\bm{X}_0\) and \(c=\bm{b}_k^{\top}\bm{X}_0\), we obtain
\[
\begin{aligned}
|\E\psi^{(4)}(0)|
&\lesssim
\sum_{\ell=0}^4 |d|^\ell
\E\left[
\exp\left(\frac12\bm{X}_0^{\top}B \bm{X}_0\right)
|\bm{b}_i^{\top}\bm{X}_0|^{4-\ell}
|\bm{b}_k^{\top}\bm{X}_0|^{4-\ell}
\right]\\
&\lesssim_{\beta,\delta}
Z^G(B)
\sum_{\ell=0}^4
|d|^\ell\|\bm{b}_i\|_2^{4-\ell}\|\bm{b}_k\|_2^{4-\ell},
\end{aligned}
\]
where the second inequality follows from
Lemma~\ref{lem:hybrid-domination}~(ii).
By \eqref{eq:column-bounds}, each summand is bounded by
\[
\left(\frac{\beta}{N}\right)^\ell
\left(\frac{\beta}{\sqrt N}\right)^{2(4-\ell)}
=
O_\beta(N^{-4}).
\]
Hence, we have:
\[
|\E\psi^{(4)}(0)|
\lesssim_{\beta,\delta} Z^G(B) N^{-4}.
\]

We next control the remainder.  Now, it is quite straightforward to see that for either
\(\zeta=\sigma_k\) or \(\zeta=Z_k\), every fixed integer \(m\ge0\),
and \(0\le s\le1\), 
\begin{equation}\label{eq:gaussian-linear-moment}
\E_\zeta\left[|\zeta|^m e^{sc\zeta}\right]
\lesssim_m
(1+|c|^m)e^{c^2/2}.
\end{equation}
Using \eqref{eq:psi-derivative} with \(m=6\), for
\(\zeta\in\{\sigma_k,Z_k\}\) and \(0\le s\le1\), we obtain
\[
|\zeta^6\psi^{(6)}(s\zeta)|
\lesssim
\exp\left(\frac12\bm{X}_0^{\top}B \bm{X}_0\right) e^{sc\zeta}
\sum_{\ell=0}^4
|d|^\ell |c|^{6-\ell}|\zeta|^6
|a+ds\zeta|^{4-\ell}.
\]
Expanding the last factor and using \(s\le1\) gives
\[
|a+ds\zeta|^{4-\ell}
\lesssim
\sum_{r=0}^{4-\ell}
|a|^{4-\ell-r}|d|^r|\zeta|^r.
\]
Hence, conditionally on \(\bm{X}_0\), we have:
\[
\begin{aligned}
\E_\zeta|\zeta^6\psi^{(6)}(s\zeta)|
\lesssim
\exp\left(\frac12\bm{X}_0^{\top}B \bm{X}_0\right)
\sum_{\ell=0}^4\sum_{r=0}^{4-\ell}
|d|^{\ell+r}|a|^{4-\ell-r}|c|^{6-\ell} 
\E_\zeta\!\left[|\zeta|^{6+r}e^{sc\zeta}\right].
\end{aligned}
\]
Recall that by \eqref{eq:gaussian-linear-moment},
\[
\E_\zeta\!\left[|\zeta|^{6+r}e^{sc\zeta}\right]
\lesssim
(1+|c|^{6+r})e^{c^2/2}.
\]
Now, the definition of \(\eta_{ik}\), followed by the triangle inequality
gives:
\[
\begin{aligned}
|\eta_{ik}|
\lesssim
\sum_{\ell=0}^4\sum_{r=0}^{4-\ell}
|d|^{\ell+r}
\E\Big[
\exp\left(\frac12\bm{X}_0^{\top}B \bm{X}_0\right) e^{c^2/2}|a|^{4-\ell-r}\times
\bigl\{|c|^{6-\ell}+|c|^{12+r-\ell}\bigr\}
\Big].
\end{aligned}
\]

Now, note that by \eqref{eq:column-bounds}, for all sufficiently large \(N\),
\(\|\bm{b}_k\|_2^2\le\delta/4\).  Since
\(a=\bm{b}_i^{\top}\bm{X}_0\) and \(c=\bm{b}_k^{\top}\bm{X}_0\),
Lemma~\ref{lem:hybrid-domination}~(iii) gives
\[
\begin{aligned}
|\eta_{ik}|
\lesssim_{\beta,\delta}
Z^G(B)
\sum_{\ell=0}^4\sum_{r=0}^{4-\ell}
|d|^{\ell+r}\|\bm{b}_i\|_2^{4-\ell-r}
\left(
\|\bm{b}_k\|_2^{6-\ell}
+
\|\bm{b}_k\|_2^{12+r-\ell}
\right).
\end{aligned}
\]
By \eqref{eq:column-bounds}, we have:
\[
|d|^{\ell+r}\|\bm{b}_i\|_2^{4-\ell-r}
\|\bm{b}_k\|_2^{6-\ell}
=
O_{\beta}(N^{-5-r/2})\quad\text{and} \quad |d|^{\ell+r}\|\bm{b}_i\|_2^{4-\ell-r}
\|\bm{b}_k\|_2^{12+r-\ell}
=
O_{\beta}(N^{-8-r}).
\]
Consequently,
\[
|\eta_{ik}|
\lesssim_{\beta,\delta}
Z^G(B)
 N^{-5}.
\]
This, together with \eqref{eq:taylor-replacement}, gives
\[
|\Delta_{ik}|
\lesssim_{\beta,\delta} Z^G(B) N^{-4}.
\]
Summing the telescoping bounds over \(k<i\) and then over \(i\), we have:
\[
\begin{aligned}
\sum_{i=1}^N
\left|
\E F_i(\bm Y_{(i,0)})-\E F_i(\bm Z_{(i,0)})
\right|
&\le
\sum_{i=1}^N\sum_{k=1}^{i-1}|\Delta_{ik}|\\
&\lesssim_{\beta,\delta}
Z^G(B) N^{-4}
\frac{N(N-1)}2\\
&\lesssim_{\beta,\delta}
Z^G(B) N^{-2}.
\end{aligned}
\]
This proves the assertion for all sufficiently large \(N\).

Finally, for the first finitely many terms \(N\le N_0\) (say), we apply
Lemma~\ref{lem:hybrid-domination}~(ii) with \(q=1\), \(m_1=4\), and
\(\bm u_1= \bm b_i\), separately to \(\bm Y_{(i,0)}\) and \(\bm Z_{(i,0)}\).  Using
\(\|\bm b_i\|_2^2\le\beta^2/N\), we obtain
\begin{eqnarray*}
&&\sum_{i=1}^N\left|
\E\left(\exp\left\{\frac12\bm Y_{(i,0)}^\top B \bm Y_{(i,0)}\right\} L_i(\bm Y_{(i,0)})^4\right) -
\E\left(\exp\left\{\frac12\bm Z_{(i,0)}^\top B \bm Z_{(i,0)}\right\} L_i(\bm Z_{(i,0)})^4\right)\right|\\
&\lesssim_\delta&
Z^G(B)\sum_{i=1}^N\|\bm{b}_i\|_2^4
\lesssim_\delta
 \beta^4 Z^G(B) N^{-1}
\lesssim_{\beta,\delta}
Z^G(B) N_0 N^{-2}.
\end{eqnarray*}
The proof of Lemma \ref{lem:fourth-gaussianization} is now complete.
\end{proof}

\end{document}